\newcolumntype{H}{>{\setbox0=\hbox\bgroup}c<{\egroup}@{}}
\theoremstyle{plain}
\newtheorem{theorem}{Theorem}[section]
\theoremstyle{definition}
\newtheorem{definition}[theorem]{Definition}
\theoremstyle{remark}
\newtheorem{remark}[theorem]{Remark}
\renewcommand{\smash}[1]{#1}
\newcommand{\STATE}{\State}
\newcommand{\TCBS}{\textsc{tcbs}}
\newcommand{\ARI}{\textsc{ari}}
\newcommand{\Z}{\mathbb{Z}}
\newcommand{\SC}{\mathcal{S}}
\newcommand{\N}{\mathbb{N}}
\newcommand{\TDA}{\textsc{tda}}
\newcommand{\PH}{\textsc{ph}}
\newcommand{\VR}{\textsc{vr}}
\newcommand{\tSC}{\textsc{sc}}
\newcommand{\R}{\mathbb{R}}
\newcommand{\bound}{\mathcal{B}}
\newcommand{\betti}{B}
\newcommand{\feat}{\mathcal{F}}
\newcommand{\TOPF}{\textsc{topf}}
\newcommand{\ZthreeZ}{\mathbb{Z}/3\mathbb{Z}}
\newcommand{\TPCC}{\textsc{tpcc}}
\newcommand{\VAE}{\textsc{vae}}
\newcommand{\features}{\mathcal{F}}
\DeclareMathOperator{\Ima}{Im}
\DeclareMathOperator{\rk}{rk}
\DeclareMathOperator{\diag}{diag}
\DeclareMathOperator*{\argmin}{arg\,min}
\newcommand{\figuretitle}{\textsc}
\newcommand{\citep}[1]{\cite{#1}}
\newcommand{\citet}[1]{\cite{#1}}
\newcommand\michael[1]{\noindent{\textcolor{magenta}{[\textsc{mts}: #1]}}}
\newcommand\vincent[1]{\noindent{\textcolor{magenta}{[\textsc{vpg}: #1]}}}
\renewcommand\vincent[1]{}\renewcommand\michael[1]{}
\begin{document}
	\title{Point-Level Topological Representation Learning on Point Clouds}
	
	\author{%
		Vincent P.~Grande and  Michael T. Schaub\\
		\textsc{rwth} Aachen University, Germany
	}
	\date{\vspace{-3ex}}
	\maketitle
\begin{abstract}
	\noindent
	Topological Data Analysis (\TDA) allows us to extract powerful topological and higher-order information on the global shape of a data set or point cloud.
	Tools like Persistent Homology give a \emph{single} complex description of the \emph{global structure} of the point cloud.
	However, common machine learning applications like classification require \emph{point-level} information and features. 
	In this paper, we bridge this gap and propose a novel method to extract node-level topological features from complex point clouds using discrete variants of concepts from algebraic topology and differential geometry.
	We verify the effectiveness of these topological point features (\TOPF) on both synthetic and real-world data and study their robustness under noise and heterogeneous sampling.
\end{abstract}
\iftoggle{arxiv}{
\begin{figure}[ht!]
	\begin{center}
	\centerline{
		\includegraphics[width=\linewidth]{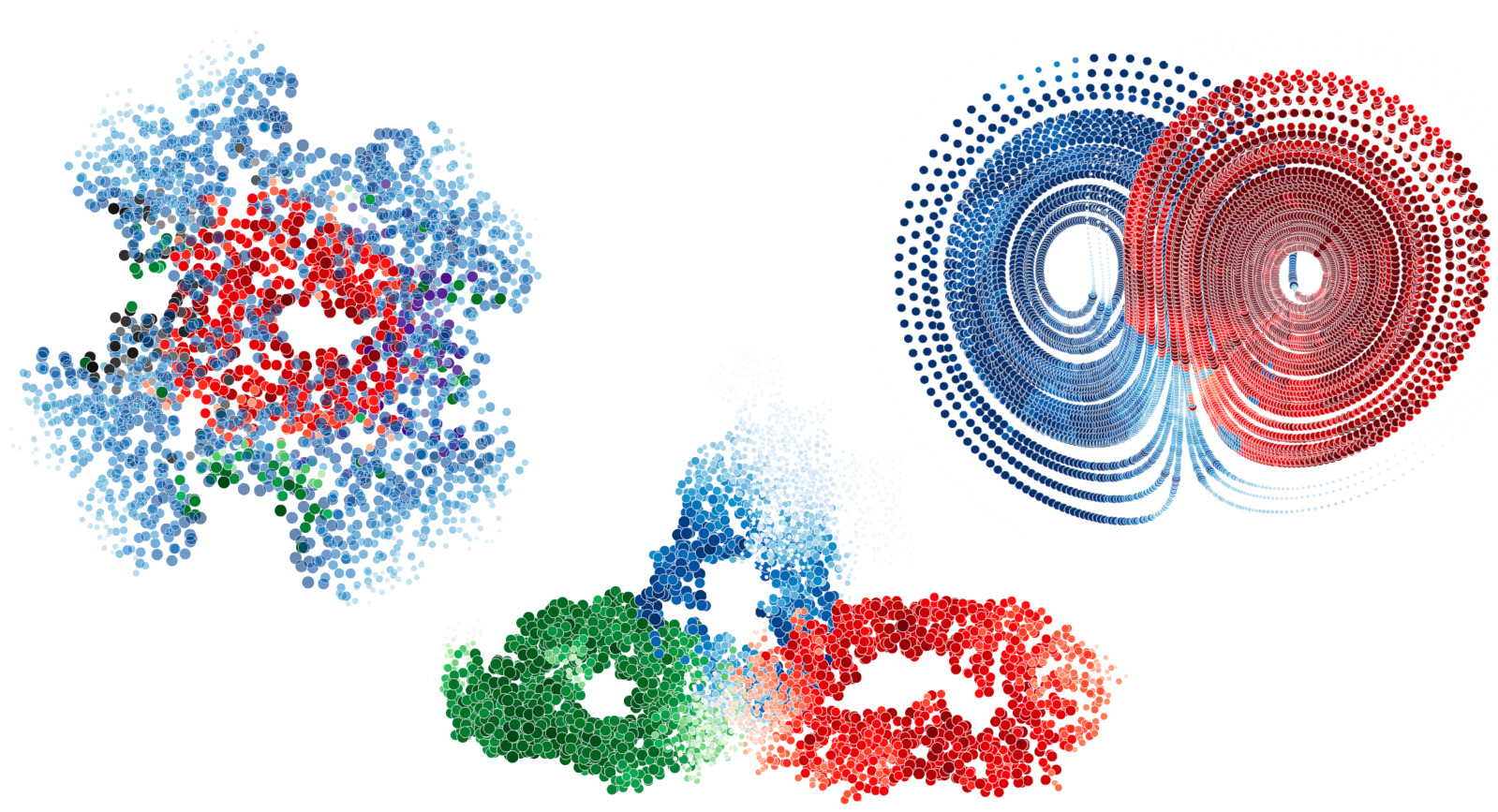}
	}\captionsetup{labelformat=empty}
	\caption{\figuretitle{
			Topological point features (\TOPF{}) on real-world and simulated 3d point clouds.}
			For every point, we highlight the largest corresponding topological feature, where colour stands for different features and saturation for the value of the feature. (Cf.\ \Cref{fig:QualitativeExperiments})
			}
			\addtocounter{figure}{-1}
			\vspace{-3cm}
\end{center}
\end{figure}
\newpage
}{}
\begin{figure*}[ht!]
	\begin{center}
		\includegraphics[width=\linewidth]{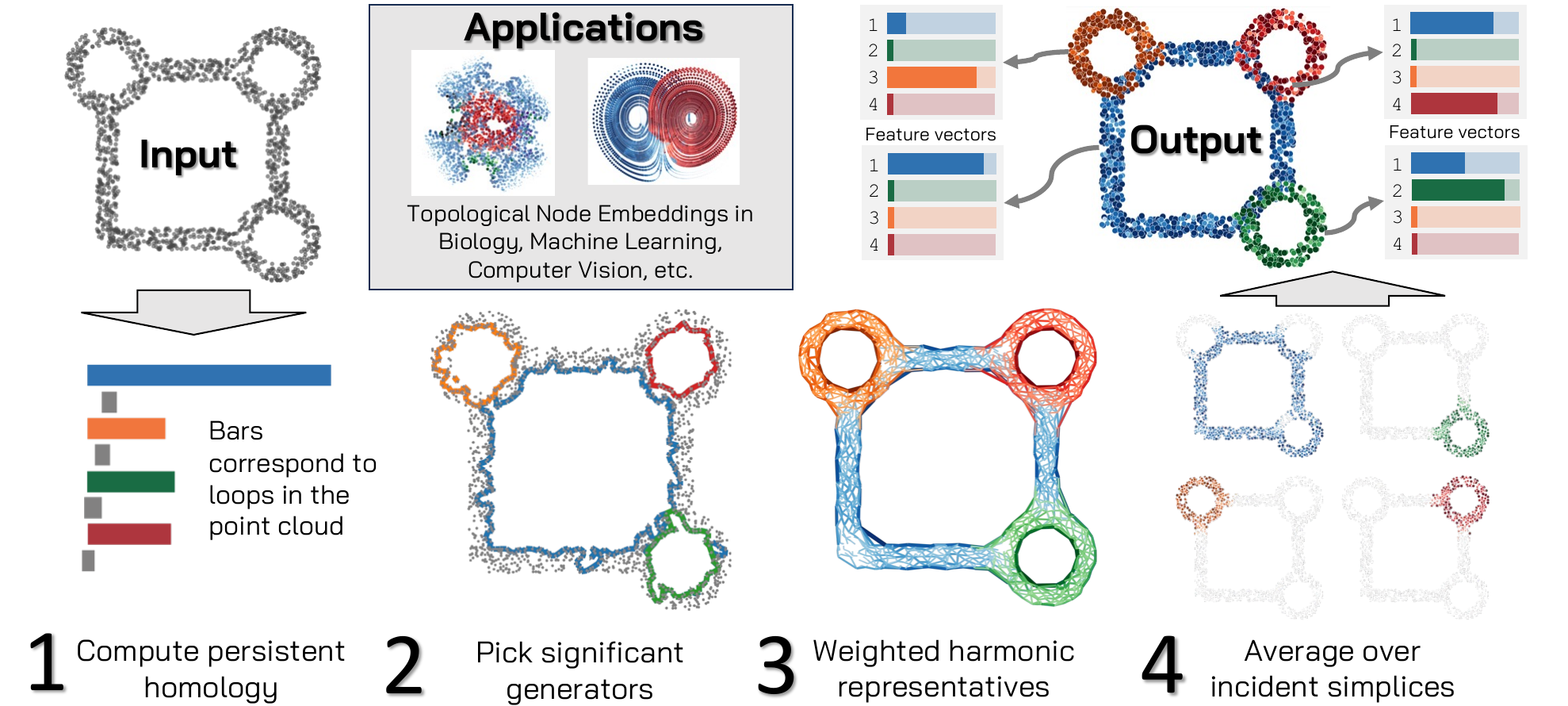}
\caption{\figuretitle{Computing Topological Point Features (\TOPF)}.
			\figuretitle{Input.} A point cloud $X$ in $n$-dimensional space.
			\figuretitle{Step 1.} To extract global topological information, the persistent homology is computed on an $\alpha$/\VR-filtration.
			The most significant topological features $\features$ across all specified dimensions are selected.
			\figuretitle{Step 2.} $k$-homology generators associated to all features $f_{i,k}\in\features$ are computed. For every feature, a simplicial complex is built at a step of the filtration where $f_{i,k}$ is alive.
			\figuretitle{Step 3.} The homology generators are projected to the harmonic space.
			\figuretitle{Step 4.}  The vectors are normalised to obtain vectors $\smash{\mathbf{e_k^i}}$ indexed over the $k$-simplices. For every point $x$ and feature $\smash{f\in\features}$, we compute the mean of the entries of $\mathbf{e_k^i}$ corresponding to simplices containing $x$.
			The output is a $|X|\times|\features|$ matrix which can be used for downstream \textsc{ml} tasks.
			\figuretitle{Optional.} We weigh the simplicial complexes resulting in a topologically more faithful harmonic representative in \figuretitle{Step 3}.}
		\label{fig:fig1}
	\end{center}
	\vspace{-0.5cm}
\end{figure*}
\section{Introduction}
In modern machine learning~\citep{pml1Book}, objects are described by feature vectors. 
However, the coordinates of a single vector can often only be understood in relation to the entire data set:
if the value $x$ is small, average, large, or even an outlier depends on the remaining data.
In a low-dimensional case this issue can be addressed simply by normalising the data points according to the global mean and standard deviation or similar procedures.
We can interpret this as the most straight-forward way to construct \emph{local} features informed by the \emph{global} structure of the data set.

In the case where not all data dimensions are equally relevant, or contain correlated and redundant information, we can apply (sparse) \textsc{pca} to project the data points to a lower-dimensional space using information about the \emph{global structure} of the point cloud. 
For even more complex data, we may first have to learn the encoded structure itself:
indeed, a typical assumption underpinning many unsupervised learning methods is the so-called ``manifold hypothesis'' which posits that real world data can be described well via submanifolds of $n$-dimensional space \citep{ma2012manifold,Fefferman:2016}.
Using eigenvectors of some Laplacian, we can then obtain a coordinate system intrinsic to the point cloud (see e.g.~\cite{shi2000normalized, belkin2003laplacian,coifman2006diffusion}).
Common to all these above examples is the goal is to construct locally interpretable point-level features that encode \emph{globally meaningful positional information} robust to local perturbations of the data.

Instead of focussing on the interpretation of individual points, topological data analysis (\TDA), \cite{Carlsson:2021}, follows a different approach.
\TDA{} extracts a global description of the shape of data, which is typically considered in the form of a high-dimensional point cloud.
This is done measuring topological features like persistent homology, which counts the number of generalised ``holes'' on multiple scales.
Due to their flexibility and robustness these global topological features have been shown to contain relevant information in a broad range of application scenarios:
In medicine, \TDA{} has provided methods to analyse cancer progression \citep{lawson2019persistent}.
In biology, persistent homology has been used to analyse knotted protein structures \citep{benjamin2023homology}, and the spectrum of the Hodge Laplacian has been used for predicting protein behaviour \citep{wee2024integration}.

This success of topological data analysis is a testament to the fact that relevant information is encoded in the global topological structure of point cloud data.
Such higher-order topological information is however invisible to the tools of data analysis discussed above like \textsc{pca} or diffusion maps.
We are now faced by the problem that \textbf{(i)} important parts of the global structure of a complex point cloud can only be described by the language of applied topology, however \textbf{(ii)} most standard methods to obtain positional point-level information are not sensitive to the higher-order topology of the point cloud.

\paragraph{Contributions}
We introduce \TOPF{} (\Cref{fig:fig1}), a novel method to compute \textbf{point-level topological features} relating individual points to global topological structures of point clouds.
\TOPF{} \textbf{(i)} \emph{outperforms} other methods and embeddings for clustering downstream tasks on topologically structured data, returns \textbf{(ii)} \emph{provably meaningful representations}, and is \textbf{(iii)} \emph{robust to noise and heterogeneous sampling}. 
Finally, we introduce the topological clustering benchmark suite, the first benchmark for topological clustering.

\paragraph{Related Work}

The intersection of topological data analysis, topological signal processing and geometry processing has many interesting related developments in the past few years.
On the side of homology and \TDA{}, the authors in \cite{DeSilva2009persistent} and \cite{Perea2020} use harmonic \emph{co}homology representatives to reparametrise point clouds based on circular coordinates.
This implicitly assumes that the underlying structure of the point cloud is amenable to such a characterization.

In \cite{Basu2022harmonic,Gurnari2023probing}, the authors develop and use harmonic persistent homology for data analysis.
However, among other differences their focus is not on providing robust topological point features.
\cite{carriere2015stable} construct point features using Topology as well.
However, their signatures only summarise the local topology of the neighbourhood of the point rather than the relation between the point and the \emph{global} topological features.
\cite{Grande:2023} uses the harmonic space of the Hodge Laplacians to cluster point clouds respecting topology, but is unstable against some form of noise, has no possibility for features selection across scales and is computationally far more expensive than \TOPF.
An overview over the thriving field of topological data analysis can be found in \citep{wasserman2018topological, munch2017user}.
For a more in-depth review of related work, see \Cref{app:RelatedWork}.
Because there are different views on what constitutes \emph{representation learning}, we note that the learnt representations of \TOPF{} are the result of \emph{homology and linear algebra computations}, rather than trained features of an autoencoder or other neural network.

\paragraph{Organisation of the paper}

In \Cref{sec:Background}, we give an overview over the main ideas and concepts behind of \TOPF.
In \Cref{sec:algorithm}, we describe how to compute \TOPF.
Finally, we will apply \TOPF{} on synthetic and real-world data in \Cref{sec:experiments}.
Furthermore, \Cref{app:extendedbackground} contains a brief history of topology and a detailed discussion of related work.
\Cref{app:TheoreticalConsiderations} contains additional theoretical considerations.
In \Cref{sec:theoreticalguarantee}, we give a theoretical result guaranteeing the correctness of \TOPF.
 \Cref{app:BenchmarkSuite} describes the novel topological clustering benchmark suite, \Cref{app:implementation} contains details on the implementation and the choice of hyperparameters, \Cref{app:featureselection} gives a detailed treatment of feature selection, \Cref{app:WeightedSCs} discusses simplicial weights, and \Cref{app:limitations} discusses limitations in detail.

\paragraph{Code}

We provide \TOPF{} as an easy-to-use python package with example notebooks 
\iftoggle{arxiv}{at \url{https://github.com/vincent-grande/topf}.}
{in the supplementary material.}

\section{Main Ideas of \iftoggle{arxiv}{\TOPF}{TOPF}}
\label{sec:Background}

A main goal of algebraic topology is to capture the shape of spaces.
Techniques from topology describe globally meaningful structures that are indifferent to local perturbations and deformations.
This robustness of topological features to local perturbations is particularly useful for the analysis of large-scale noisy datasets.

In this section we provide a broad overview over the most important concepts of topology and \TDA{} for our context, prioritising intuition over technical formalities.
For more details, the reader is referred to~\cite{Bredon:1993} (algebraic topology) and \cite{munch2017user} (\TDA).

\paragraph{Simplicial Complexes}

Spaces in topology are \emph{continuous} (\emph{connected}), consist of \emph{infinitely} many points, and often live in \emph{abstract space}.
Our input data sets however consist of \emph{finitely} many points embedded in \emph{real space} $\R^n$.
In order to bridge this gap and open up topology to computational methods, we need a notion of discretised topological spaces consisting of finitely many base points with finite description length.
A \emph{simplicial complex} is the simplest discrete model that can still approximate any topological space occuring in practice \citep{Quillen:1967}:

\begin{definition}[Simplicial complexes]
	A \emph{simplicial complex} (\tSC) $\SC$ consists of a set of vertices $V$ and a set of finite non-empty subsets (simplices, $S$) of $V$ closed under taking non-empty subsets, such that the union over all simplices $\bigcup_{\sigma\in S}\sigma$ is $V$.
	We will often identify $\SC$ with its set of simplicies $S$ and denote by $\SC_k$ the set of simplices $\sigma\in S$ with $|\sigma| =k+1$, called \emph{$k$-simplices}.
	We say that $\SC$ is $n$-dimensional, where $n$ is the largest $k$ such that 
	$\SC_k$ is non-empty.
	The \emph{$k$-skeleton} of $\SC$ contains the simplices of dimension at most $k$.
	If the vertices $V$ lie in $\R^n$, we call the convex hull in $\R^n$ of a simplex $\sigma$ its \emph{geometric realisation} $|\sigma|$.
	When doing this for every simplex of $\SC$, we call this the \emph{geometric realisation of $\SC$}, $|\SC|\subset\R^n$.
\end{definition}	

We can construct an $n$-dimensional \tSC{} $\SC$ in $n+1$ steps:
We start with a set of vertices $V$ ($\SC_0$).
We then connect certain pairs of vertices with edges ($\SC_1$).
Afterwards, we fill in some fully connected triangles ($\SC_2$), and so on.

\paragraph{Vietoris--Rips and $\alpha$-complexes}

We now need a way to construct a \emph{simplicial complex} that approximates the \emph{topological structure} inherent in our data set $X\subset\R^n$.
\iftoggle{arxiv}{Such a construction will always depend on the scale of the structures we are interested in.
When looking from a very large distance, the point cloud will appear as a singular connected blob in the otherwise empty and infinite real space, on the other hand when we continue to zoom in, the point cloud will at some point appear as a collection of individual points separated by empty space; all  interesting information can be found in-between these two extreme scales where some vertices are joined by simplices and others are not.
}{}
Instead of having to pick a single scale, the \emph{Vietoris--Rips (\VR) filtration} and the \emph{$\alpha$-filtration} take as input a point cloud and return a nested sequence of simplicial complexes indexed by a scale parameter $\varepsilon$ approximating the topology of the data across all possible scales.

\begin{definition}[\VR~complex]
	Given a finite point cloud $X$ in a metric space $(\mathcal{M},d)$ and a non-negative real number $\varepsilon\in\R_{\ge 0}$, the associated \VR~complex $VR_\varepsilon(X)$ is given by the vertex set $X$ and the set of simplices
	$
	S = \left\{\sigma\subset X \mid \sigma\not = \emptyset, \forall x,y\in \sigma : d(x,y)\le \varepsilon\right\}
	$.
\end{definition}

A \VR~complex at $\varepsilon$ consists of all simplices $\sigma$ where all vertices $x\in\sigma$ have distance of at most $\varepsilon$.
For $r\le r'$, we obtain the canonical inclusions $i_{r,r'}(X)\colon VR_r(X)\hookrightarrow VR_{r'}(X)$.
For a simplex $\sigma$ and a filtration $F$, we denote by its \emph{filtration value} $F(\sigma)$ the smallest $\varepsilon$ such that $\sigma \in F_\varepsilon$.
The set of \VR~complexes on $X$ for all possible $r\in\R_{\ge 0}$ together with the inclusions then form the \emph{\VR~filtration} on $X$.
For large point clouds, the \VR~filtration becomes computationally inefficient becomes expensive due to its large number of simplices.
In contrast, the $\alpha$-filtration approximates the topology of a point cloud using far fewer simplices. Thus we will make use of $\alpha$-complexes in settings of ambient dimension lower than $4$, where their construction is computationally feasible.
For a complete discussion and definition of $\alpha$-complexes, see \Cref{app:TheoreticalConsiderations}.

\paragraph{Boundary matrices}

\iftoggle{arxiv}{So far, we have discussed a discretised version of topological spaces in the form of \tSC{}s and a way to turn point clouds into a sequence of \tSC{}s indexed by a scale parameter.
However, w}{W}e still need an \emph{algebraic representation} of simplicial complexes that is capable of encoding the structure of the \tSC{} and enables extraction of the \emph{topological features}:
The \emph{boundary matrices} $\bound_k$ associated to an \tSC{} $\SC$ store all structural information of the \tSC.
The rows of $\bound_k$ are indexed by the $k$-simplices of $\SC$ and the columns are indexed by the $(k+1)$-simplices. 
\begin{definition}[Boundary matrices]
	Let $\SC$ be a simplicial complex and $\preceq$ a total order on its vertices $V$.
	Then, the $i$-th face map in dimension $n$ $f^n_i\colon\SC_n\rightarrow \SC_{n-1}$ is given by
	\begin{align*}
		f^n_i&\colon \{v_0,v_1,\dots,v_n\}\mapsto \{v_0,v_1,\dots,\widehat{v}_i,\dots,v_n\}
	\end{align*}
	with $v_0\preceq v_1\preceq\dots\preceq v_n$ and $\widehat{v}_i$ denoting the omission of $v_i$.
	Now, the $n$-th \emph{boundary operator} $\bound_n\colon \R[\SC_{n+1}]\rightarrow\R[\SC_{n}]$ with $\R[\SC_{n}]$ being the real vector space over the basis $\SC_{n}$ is given by
	\[
		\bound_n\colon \sigma\mapsto\smash{\sum_{i=0}^{n+1}}(-1)^if^{n+1}_i(\sigma).
	\]
	
	When lexicographically ordering the simplex basis, we can view $\bound_n$ as a \emph{matrix}.
	We call $\R[\SC_{n}]$ the space of $n$-chains.
	$\bound_0$ is the vertex-edge incidence matrix of the associated graph consisting of the $0$- and $1$-simplices of~$\SC$ and $\bound_1$ is the edge-triangle incidence matrix of $\SC$
\end{definition}
\paragraph{Betti Numbers and Persistent Homology}
\begin{figure*}[tb!]
	\vskip -0.1in
	\begin{center}
		\centerline{
			\includegraphics[width=0.15\linewidth]{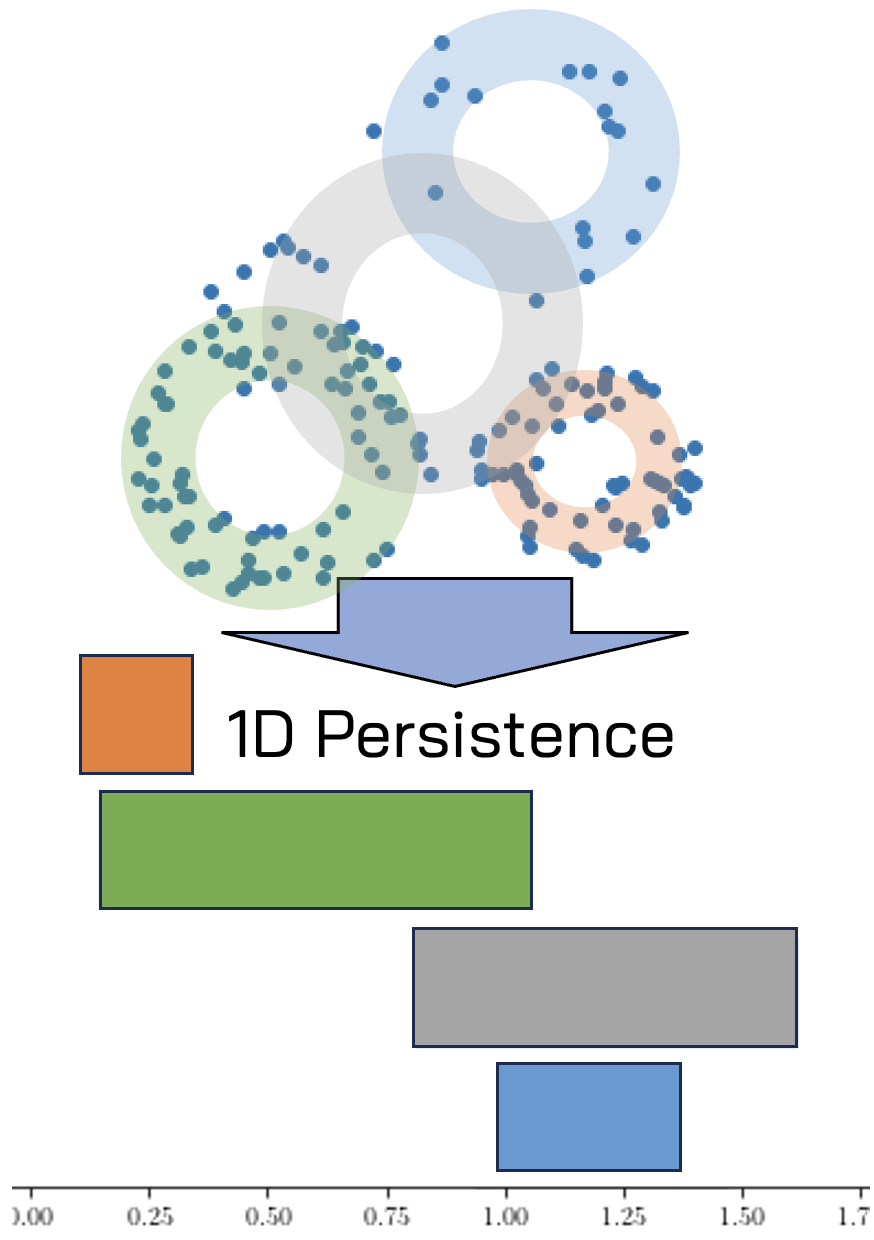}
			\includegraphics[width=0.28\linewidth]{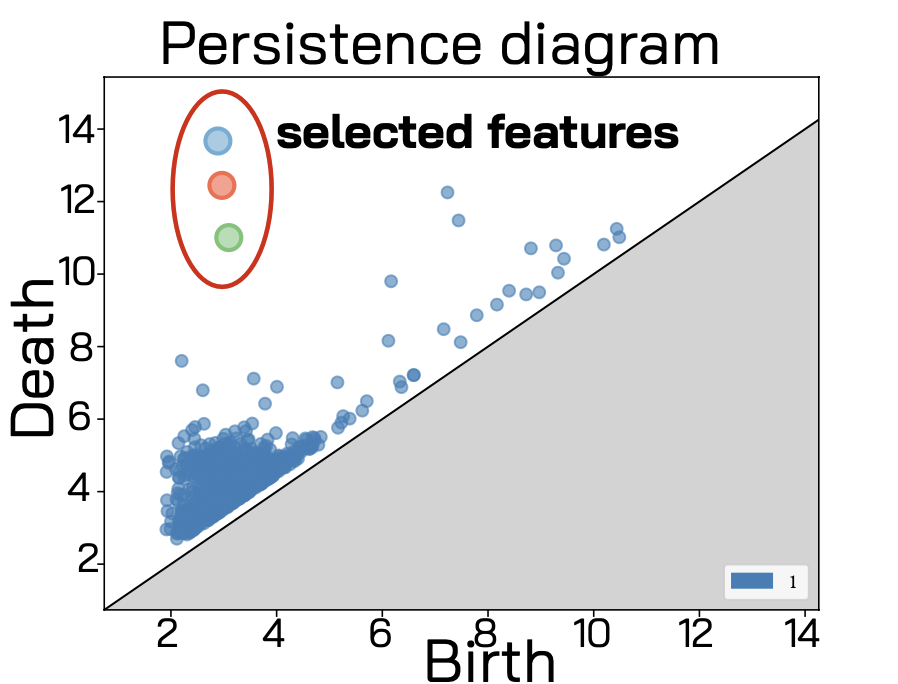}
			\includegraphics[width=0.28\linewidth]{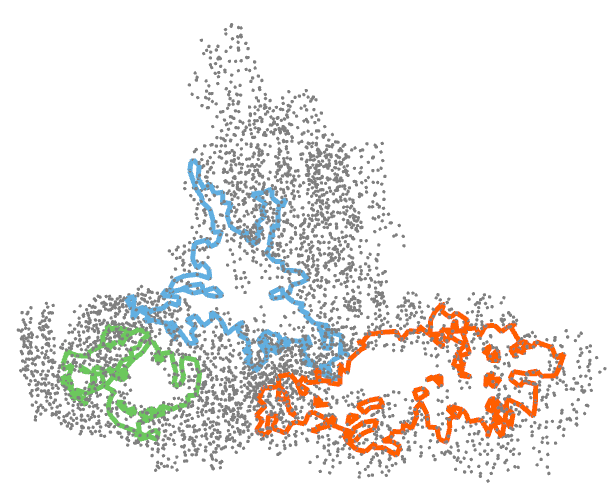}
			\includegraphics[width=0.27\linewidth]{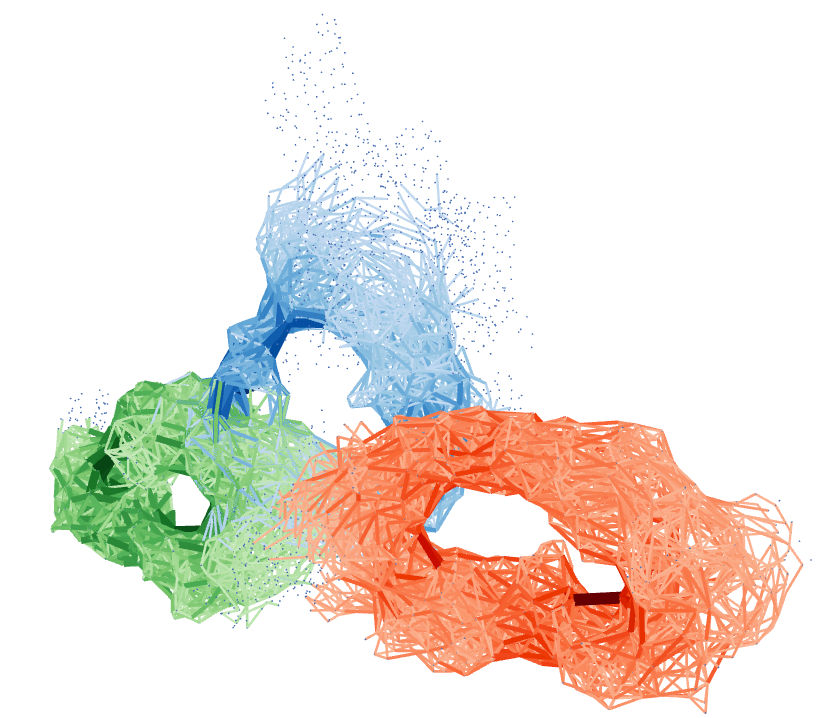}
		}
		\caption{\figuretitle{\PH{} sketch and \TOPF{} pipeline applied to \textsc{nalcn} channelosome, a membrane protein \citep{Kschonsak:2022}.}
			\emph{Left:} Bars represent life times of features \cite{grande2023nonisotropic}.
			\emph{Centre left:} Steps \figuretitle{1}\&\figuretitle{2a}, when computing persistent $1$-homology, three classes are more prominent than the rest.
			\emph{Centre right:} Step \textbf{2b}: The selected homology generators.
			\emph{Right:} Step \textbf{3}: The projections of the generators into harmonic space are now each supported on one of the rings.}
		\label{fig:ProteinAlgorithmSteps}
		\label{fig:Persistence}
	\end{center}
	\vskip -0.3in
\end{figure*}
We now turn to the notion of \emph{topological features} and how to extract them.
\emph{Homology} is one of the main algebraic invariants to capture the shape of topological spaces and \tSC.
The $k$-th homology module $H_k(\SC)$ of an \tSC~$\SC$ with boundary operators $\bound_k$ is defined as $H_k(\SC)\coloneq\ker \bound_{k-1}/\Ima \bound_{k}$.
The \emph{generator} or representative of a homology class is an element of the kernel $\ker \bound_{k-1}$.
In dimension $1$, these are given by formal sums of $1$-simplices forming closed loops in the \tSC{}.
Importantly, the rank $\rk H_k(\SC)$ is called the $k$-th \emph{Betti number} $\betti_k$ of $\SC$.
In dimension $0$, $\betti_0$ counts the number of connected components, $\betti_1$ counts the number of loops around \enquote*{holes} of the space, $\betti_2$ counts the number of $3$-dimensional voids with $2$-dimensional boundary, and so on.

\iftoggle{arxiv}{If we are now given a filtration of simplicial complexes instead of a single \tSC{}, we can track how the homology modules \emph{evolve} as the simplicial complex \emph{grows}.
}{Given a \emph{filtration} of \tSC{}s, we can track how the homology modules \emph{evolve} as the simplicial complex \emph{grows}.}
The mathematical formalisation, \emph{persistent homology}, thus turns a point cloud via a simplicial filtration into an algebraic object summarising the topological features of the point cloud \citep{Edelsbrunner2008}.
\iftoggle{arxiv}{For performance reasons, this is done in a small prime field $\Z/p\Z$.
Because we use signs of numbers to distinguish simplex orientations, we set $p=3$\iftoggle{arxiv}{, with $\Z/3\Z$ being the smallest field being able to distinguish $1$ and $-1$.}{.}}{}

\paragraph{The Hodge Laplacian and the Harmonic Space}
\iftoggle{arxiv}{In the previous part, we have introduced a language to characterise the global shape of spaces and point clouds.
However, w}{W}e need to relate the \emph{global characterisations} of \PH{} back to \emph{local properties} of the point cloud.
We will do so by using ideas and concepts from differential geometry and topology:
The simplicial Hodge Laplacian is a discretisation and generalisation of the Hodge--Laplace operator acting on differential forms of manifolds:

\begin{definition}[Hodge Laplacian]
	Given a simplicial complex $\SC$ with boundary operators $\bound_k$, we define the $n$-th Hodge Laplacian $L_n\colon \R[\SC_{n}]\rightarrow\R[\SC_{n}]$ by setting
	\[
	\smash{L_n\coloneq \bound_{n-1}^\top\bound_{n-1}+\bound_n\bound_{n}^\top.}
	\]
\end{definition}

\begin{theorem}[Hodge Decomposition \citep{Lim:2020,  Schaub:2021, Roddenberry:2021}]
	\label{thm:HodgeDecomposition}
	For an \tSC{} $\SC$ with boundary matrices $\left(\bound_i\right)$ and Hodge Laplacians $\left(L_i\right)$, we have in every dimension $k$
	\[
	\R[\SC_k]=
	\iftoggle{arxiv}{
		\underbrace{\Ima \bound_{k-1}^\top}_\text{gradient space} \oplus  \underbrace{\ker L_k}_\text{harmonic space}\oplus \underbrace{\Ima \bound_{k}}_\text{curl space}.
	}{\smash{\underbrace{\Ima \bound_{k-1}^\top}_\text{gradient space} \oplus  \underbrace{\ker L_k}_\text{harmonic space}\oplus \underbrace{\Ima \bound_{k}}_\text{curl space}.}}
	\]
\end{theorem}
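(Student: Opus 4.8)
The plan is to derive the decomposition from two elementary ingredients: the chain-complex identity $\bound_{k-1}\bound_k = 0$, and the fundamental theorem of linear algebra applied to the finite-dimensional real inner product spaces $\R[\SC_k]$, equipped with the inner product making the simplex basis orthonormal (so that the transpose $\bound_k^\top$ really is the adjoint of $\bound_k$). Throughout, the decomposition will be proved not merely as an algebraic direct sum but as an \emph{orthogonal} one, which is what forces uniqueness of the three summands.

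First I would verify $\bound_{k-1}\bound_k = 0$ directly from the face-map definition: composing $\bound_k$ with $\bound_{k-1}$ yields, for each simplex, a signed sum indexed by ordered pairs of omitted vertices, and each unordered pair contributes exactly twice with opposite sign, so the whole expression cancels. This gives the crucial nesting $\Ima \bound_k \subseteq \ker \bound_{k-1}$, which is precisely what makes the three summands fit together correctly.

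Next I would apply the orthogonal splitting $V = \ker A \oplus \Ima A^\top$ (valid for any linear map $A$ between finite-dimensional inner product spaces) to $A = \bound_{k-1}$ acting on $\R[\SC_k]$, obtaining $\R[\SC_k] = \ker \bound_{k-1} \oplus \Ima \bound_{k-1}^\top$, with the gradient space $\Ima \bound_{k-1}^\top$ as the orthogonal complement of $\ker \bound_{k-1}$. It then remains to split $\ker \bound_{k-1}$ itself. The key step is to characterise the harmonic space as a simultaneous kernel: since $L_k$ is symmetric positive semi-definite, $L_k x = 0$ holds iff the quadratic form $x^\top L_k x = \lVert \bound_{k-1} x \rVert^2 + \lVert \bound_k^\top x \rVert^2$ vanishes, i.e.\ iff $x \in \ker \bound_{k-1} \cap \ker \bound_k^\top$. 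Because $\ker \bound_k^\top = (\Ima \bound_k)^\perp$, this identifies $\ker L_k$ as exactly the orthogonal complement of $\Ima \bound_k$ \emph{inside} $\ker \bound_{k-1}$ (using the nesting $\Ima\bound_k\subseteq\ker\bound_{k-1}$), so that $\ker \bound_{k-1} = \Ima \bound_k \oplus \ker L_k$. Substituting into the previous splitting gives the full three-term decomposition, and mutual orthogonality of all three pieces follows since $\Ima \bound_{k-1}^\top \perp \ker\bound_{k-1}$ contains both other summands, while $\Ima\bound_k \perp \ker L_k$ holds by the construction just described.

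The calculations are routine; the only genuine content is recognising that $\ker L_k$ is the intersection of the two relevant kernels, which hinges on $L_k$ being positive semi-definite together with the sum-of-squares form of $x^\top L_k x$. I expect the main obstacle to be bookkeeping about orthogonality rather than any deep idea: one must fix the standard inner product on each $\R[\SC_k]$ once and use it consistently, so that the adjoint interpretation of the transpose and the identity $\ker \bound_k^\top = (\Ima \bound_k)^\perp$ both hold, at which point the three summands are automatically orthogonal and the sum is direct.
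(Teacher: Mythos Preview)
Your proof is correct and follows the standard route. Note, however, that the paper does not actually supply its own proof of this theorem: it is stated with citations and used as a known tool. The one related remark the paper does make is that ``it is easy to show that the kernel of the Hodge Laplacian is the intersection of the kernel of the boundary and the coboundary map $\ker L_k = \ker \bound_{k-1}\cap\ker \bound_k^\top$'', which is precisely the positive-semidefiniteness step you single out as the only genuine content. So your argument is entirely in line with what the paper presumes, and there is nothing to compare beyond that.
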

This, together with the fact that the $k$-th harmonic space is isomorphic to the $k$-th real-valued homology group $\ker L_k\cong H_k(\R)$ means that we can associate a \emph{unique harmonic representative} to every homology class.
\iftoggle{arxiv}{The harmonic space encodes higher-order generalisations of smooth flow around the holes of the simplicial complex.}{}
Intuitively, this means that for every \emph{abstract global homology class} of persistent homology at filtration step $t$ from above we can now compute one \emph{unique harmonic representative} in $\ker L_k$ that assigns every simplex a value based on how much it contributes to the homology class.
Thus, the Hodge Laplacian is a gateway between the \emph{global topological features} and the \emph{local properties} of our \tSC.
\iftoggle{arxiv}{
It is easy to show that the kernel of the Hodge Laplacian is the intersection of the kernel of the boundary and the coboundary map $\ker L_k = \ker \bound_{n-1}\cap\ker \bound_n^\top$.
Because we have finite \tSC{}s we can identify the spaces of chains and cochains.
This leads to another characterisation of the harmonic space: The space of chains that are simultaneously homology and cohomology representatives.
}{}
We discuss how this relates to the theory of differential forms and Hodge theory in the continuous case in \Cref{app:differentialforms}.

\section{How to Compute Topological Point Features}
\label{sec:algorithm}
\begin{algorithm}[tb]
	\caption{Topological Point Features (\TOPF)}
	\label{alg:TOPF}
	\begin{algorithmic}
		\STATE {\bfseries Input:} Point cloud $X\in\R^n$, maximum homology dimension $d\in\N$, interpolation coeff.\ $\lambda$.
		\STATE \textbf{1.} Compute persistent homology with generators in dim.\ $k\le d$. (Sec.\ 2: \textit{Betti Numbers \& Persistent Homology})
		\STATE \textbf{2.} Select set of significant features $(b_i,d_i,g_i)$ with birth, death, and generator in $\mathbb{F}_3$ coordinates (See \textbf{Step 2}).
		\STATE \textbf{3.} Embed $g_i$ into real space \eqref{eq:coefficientlifting}, and project into harmonic subspace \eqref{eq:curlgradprojection} of \tSC{} at step $d_i^\lambda b_i^{1-\lambda}$ or $\lambda d_i +(1-\lambda) b_i$.
		\STATE \textbf{4.} Normalise projections to $\mathbf{e}_i^k$ and compute $F_k^i(x)\coloneq \operatorname{avg}_{x\in\sigma}(\mathbf{e}_i^kl(\sigma))$ for all points $x\in X$ \eqref{eq:featurevector}.
		\STATE \textbf{Output:} Features of $x\in X$
	\end{algorithmic}
\end{algorithm}
In this section, we will combine the ideas and insights of the previous section to give a complete account of how to compute topological point features (\TOPF). A pseudo-code version can be found in \Cref{alg:TOPF} and an overview in \Cref{fig:fig1}.
We start with a finite point cloud $X\subset \R^n$.

\paragraph{Step 1: Computing the persistent homology}

First, we determine the \emph{most significant persistent homology classes} which determine the shape of the point cloud.
Doing this, we also extract the \enquote{interesting} scales of the data set.
We will later use this to construct \tSC{}s to localise the global homology features.
Thus we first compute the persistent $k$-homology modules $P_k$ including a set of homology representatives $R_k$ of $X$ using an $\alpha$-filtration for $n\le 3$ and a \VR~filtration for $n>3$.
We use $\Z/3\Z$ coefficients to be sensitive to simplex orientations.
In case we have prior knowledge on the data set, we can choose a real number $R\in\R_{>0}$ and only compute the filtration up to the parameter $R$.
In data sets like protein atom coordinates, this might be useful as we have prior knowledge on what constitutes the \enquote{interesting} scale, reducing computational complexity.
See \Cref{fig:ProteinAlgorithmSteps} \emph{centre left} for a \PH{} diagram.

\paragraph{Step 2: Selecting the relevant topological features}

We now need to select the relevant \emph{persistent homology classes} which carry the most important \emph{global information}.
The persistent homology $P_k$ module in dimension $k$ is given to us as a list of pairs of birth and death times $(b_i^k,d_i^k)$.
We can assume these pairs are ordered in non-increasing order of the durations $l_i^k=d_i^k-b_i^k$.
\iftoggle{arxiv}{This list is typically very long and consists to a large part of noisy homological features which vanish right after they appear.
In contrast, w}{W}e are interested in connected components, loops, cavities, etc.\ that \emph{persist} over a long time, indicating that they are important for the shape of the point cloud.
Distinguishing between the relevant and the irrelevant features is in general difficult and may depend on additional insights on the domain of application.
In order to provide a heuristic which does not depend on any a-priori assumptions on the number of relevant features we pick the smallest quotient $q_i^k\coloneq l_{i+1}^k/l_i^k>0$ as the point of cut-off $\smash{N_k\coloneq\argmin_{i} q_i^k}$.
The only underlying assumption of this approach is that the band of \enquote{relevant} features is separated from the \enquote{noisy} homological features by a drop in persistence.
If this assumption is violated, the only possible way to do meaningful feature selection depends on application-specific domain knowledge.
We found that our proposed heuristics work well across a large scale of applications.
See \Cref{fig:ProteinAlgorithmSteps} \emph{left} and \emph{centre} for an illustration and \Cref{app:featureselection} for more technical details and ways to improve and adapt the feature selection module of \TOPF.
We call the chosen  $k$-homology classes with $k$-homology generators $f^i_k$.

\paragraph{Step 3: Projecting the features into harmonic space and normalising}

In this step, we need to relate the \emph{global topology} extracted in the previous step to the simplices which we will use to compute the \emph{local} topological point features.
Every selected feature $f_k^i$ of the previous step comes with a birth time $b_{i,k}$ and a death time $d_{i,k}$.
This means that the homology class $f_k^i$ is present in every \tSC{} of the filtration between step $\varepsilon = b_{i,k}$ and $\varepsilon = d_{i,k}$ and we could choose any of the \tSC{}s for the next step.
Picking a \emph{small} $\varepsilon$ will lead to \emph{fewer} simplices in the \tSC{} and thus to a very \emph{localised} harmonic representative.
Picking a \emph{large} $\varepsilon$ will lead to \emph{many} simplices in the \tSC{} and thus to a very \emph{smooth} and \enquote{blurry} harmonic representative with large support.
Finding a middle ground between these regimes returns optimal results.
Given interpolation hyperparameter $\gamma\in (0,1)$, we will thus consider the simplicial complex $\SC^{t_{i,k}}(X)$ at step $\smash{t_{i,k}\coloneq b_{i,k}^{1-\gamma} d_{i,k}^{\gamma}}$ for $k>0$ and at step $t_{i,k}\coloneq \gamma d_{i,k}$ for $k=0$ of the simplicial filtration.
At this point, the homology class $f_k^i$ is still alive.
We then consider the real vector space $\smash{\R[\SC_k^{t_{i,k}}(X)]}$ with formal basis consisting of the $k$-simplices of the \tSC{}~$\SC^{t_{i,k}}$.
From the persistent homology computation of the first step, we also obtain a generator of the feature $f_k^i$, consisting of a list $\Sigma_k^i$ of simplices $\smash{\hat{\sigma}_j\in \SC^{b_{i,k}}_k}$ and coefficients $c_j\in\Z/3\Z$.
We need to turn this formal sum of simplices with $\Z/3\Z$-coefficients into a vector in the real vector space $\smash{\R[\SC_k^{t_{i,k}}(X)]}$:
Let $\iota\colon\Z/3\Z$ be the map induced by the canonical inclusion of $\{-1,0,1\}\hookrightarrow\R$.
We can now define an indicator vector $\smash{e_k^i\in\R[\SC_k^{t_{i,k}}(X)]}$ associated to the feature $f_k^i$ (Cf.\ \cite{DeSilva2009persistent}).
\begin{equation}
	\label{eq:coefficientlifting}
e_k^i(\sigma)\coloneq\smash{\begin{cases}
	\iota(c_j)&\exists \hat{\sigma}_j\in \Sigma_k^i:\sigma = \hat{\sigma}_j\\
	0 &\text{else}
\end{cases}}.
\end{equation}
Empirically, $e^i_k$ is a homology generator for all real coefficients as well, although our construction only guarantees for $\bound_{k-1}e^i_k\equiv 0 \mod 3$.
We discuss how to fix the rare case where this does not work in \Cref{app:fixcycles} and now assume to work with a real homology representative $e^i_k$.
While this homology representative lives in a real vector space, it is not unique, has a small support, and its value can differ largely even between close simplices.
All of these problems can be solved by projecting the homology representative to the harmonic subspace $\ker L_k$ of $\smash{\R[\SC_k^{t_{i,k}}(X)]}$.
Rather than directly projecting $e_k^i$ to the harmonic subspace, we make use of the Hodge decomposition theorem (\Cref{thm:HodgeDecomposition}) which allows us to 
solve computationally efficient least square problems:
\begin{align}
	\label{eq:curlgradprojection}
e_{k,\text{curl}}^i&\coloneq \bound_{k}\argmin_{x\in \R[\SC_{k+1}]} \left\lVert e_k^i-e_{k,\text{grad}}^i -\bound_{k}x\right\rVert_2^2
\end{align}
and get the \textbf{harmonic representative}
$\hat{e}_k^i\coloneqq e_k^i-e_{k,\text{grad}}^i-e_{k,\text{curl}}^i$. (Cf.\ \Cref{fig:ProteinAlgorithmSteps} \emph{right} for a visualisation.)
Because homology representatives are gradient-free, $\smash{e_{k,\text{grad}}^i = 0}$ and we only need to compute $\smash{e_{k,\text{curl}}^i}$. 

\paragraph{Step 4: Processing and aggregation at a point level}
In the previous step, we have computed a set of harmonic representatives of homology classes.
Each harmonic representative assigns each simplex in the correspondings \tSC{}s a value.
However, these simplices likely have no real-world meaning and the underlying simplicical complexes differ depending on the birth and death times of the homology classes.
Hence in this step, we will collect the features on the point-level after performing some necessary preprocessing.
Given a vector $\hat{e}_k^i$ indexed over simplices and a hyperparameter $\delta$, we now construct $\mathbf{e}_k^i\colon \SC_k^{t_{i,k}}(X)\rightarrow [0,1]$ by setting 
$
\mathbf{e}_k^i\colon \sigma \mapsto \in \{|\hat{e}_k^i(\sigma)|/(\delta\max_{\sigma' \in \SC_k^{t_{i,k}}(X)}|\hat{e}_k^i(\sigma')|),1\}
$
 such that $\hat{e}_k^i$ is normalised to $\smash{[0,1]}$, then the values of $\smash{[0,\delta]}$ are mapped linearly to $\smash{[0,1]}$ and everything above is sent to $1$.
We found empirically that a thresholding parameter of $\delta\approx 0.07$ works best across at the range of applications considered below. However, \TOPF{} is not sensitive to small changes to $\delta$ because  entries of $\hat{e}_k^i$ are concentrated around $0$ (cf.\ \Cref{app:hyperparameters}).

For every feature $f_k^i$ in dimension $k$ with processed simplicial feature vector $\mathbf{e}_k^i$ and simplicial complex $\SC^{t_{i,k}}$, we define the point-level feature map $F^k_i\colon X\rightarrow \R$ mapping from the initial point cloud $X$ to $\R$ by setting
\begin{equation}
	\label{eq:featurevector}
F^k_i\colon v\mapsto\frac{\sum_{\sigma_k\in\SC_k^{t_{i,k}}\colon v\in \sigma_k}\mathbf{e}_k^i(\sigma_k)}{\max(1,|\{\sigma_k\in\SC_k^t:v\in \sigma_k\}|)}.
\end{equation}
For every point $v	$, we can thus view the vector $(F^k_i(v)\colon f^k_i\in\feat )$ as a feature vector for $v$.
We call this collection of features \emph{Topological Point Features} (\TOPF{}). (Cf. \Cref{fig:QualitativeExperiments} for an example).

\paragraph{Choosing Simplicial Weights}
The above discussed theory works analogously for weighted \tSC{}s.
We discuss how \TOPF{} employs weighted \tSC{}s to mitigate the influence of noise and heterogeneous sampling in \Cref{app:WeightedSCs}.
\paragraph{Theoretical Guarantees}
In the appendix in \Cref{thm:TOPFsphere} we give theoretical guarantees for when \TOPF{} provably works on an idealised point cloud.
\section{Experiments}

\begin{figure}[tb!]
	\begin{center}
		\centerline{
			\includegraphics[width=\linewidth]{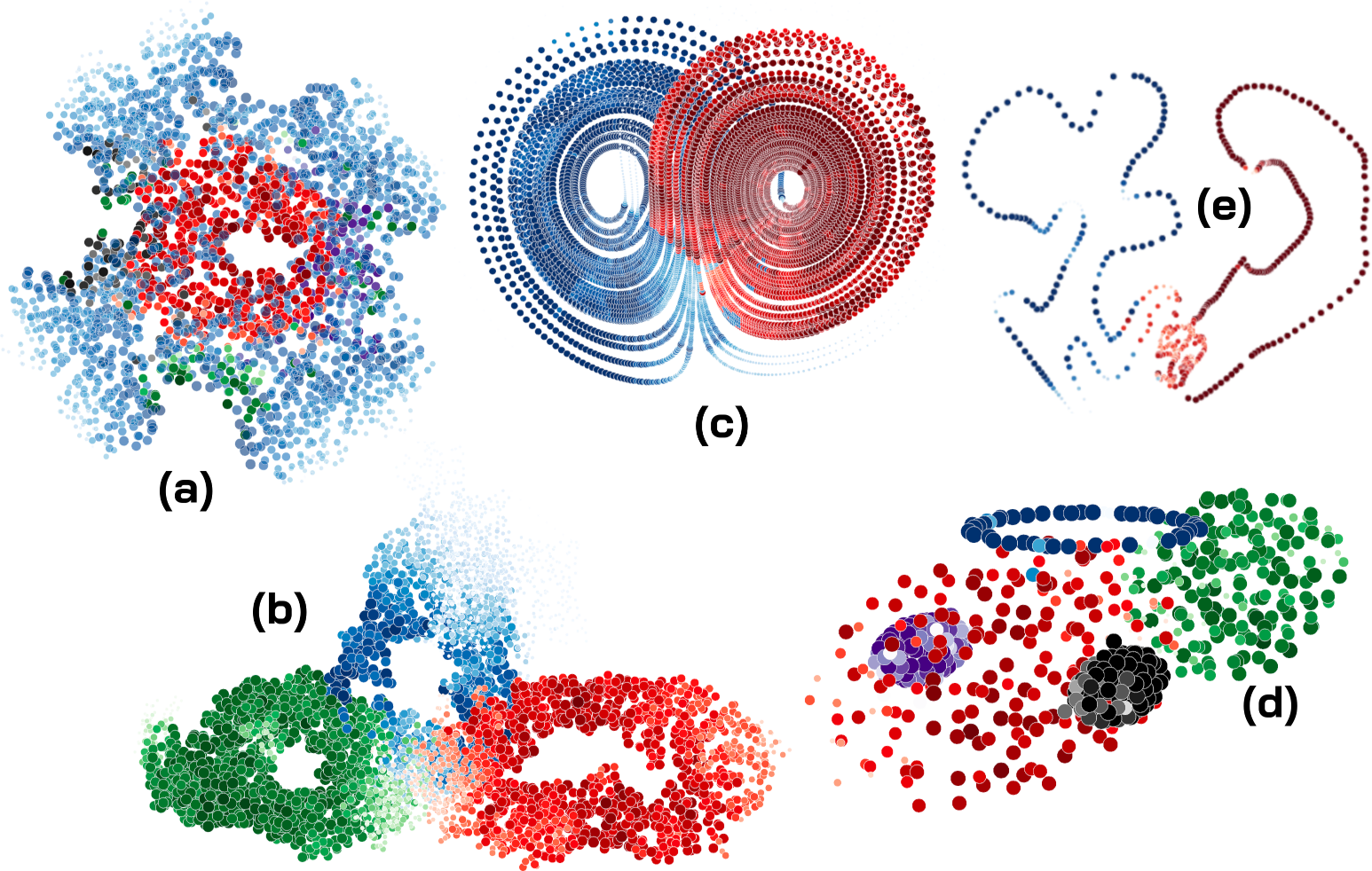}
		}
		\caption{\figuretitle{
				\TOPF{} on \oldstylenums{3}d real-world and synthetic point clouds.}
			For every point, we highlight the largest corresponding topological feature, where colour stands for the different features and saturation for the value of the feature.
			\emph{(b):} Atoms of \textsc{nalcn} channelosome \citep{Kschonsak:2022} display three distinct loops.
			\emph{(c):} Points sampled in the state space of a Lorentz attractor.
			The two features correspond to the two lobes of the attractor.
			\emph{(d):} Point cloud \texttt{spaceship} of our newly introduced topological clustering benchmark suite (See \Cref{app:BenchmarkSuite}).
			\emph{(e):} Latent space of a \VAE{} trained on image patches showing topological structure (See \Cref{fig:figcow} for details).
		}
		\label{fig:QualitativeExperiments}
	\end{center}
	\vspace{-0.2in}
\end{figure}
\label{sec:experiments}
In this section, we conduct experiments on real world and synthetic data, compare the clustering results with clustering by \TPCC{}, other classical clustering algorithms, and other point features, and demonstrate the robustness of \TOPF{} against noise.
In \Cref{tab:ablation}, we perform an ablation study with respect to the harmonic projections of step 3 of \TOPF{}.

\paragraph{Topological Point Cloud Clustering Benchmark}
We introduce the topological clustering benchmark suite (\Cref{app:BenchmarkSuite}) and report running times and the accuracies of clustering based on \TOPF{} and other methods and point embeddings, see \Cref{fig:quantitativeComparison}.
We see that spectral clustering on \TOPF{} vectors (listed as \TOPF{}) \emph{outperforms} all classical clustering algorithms on all but one dataset by a wide margin.
We also see that \TOPF{} closely matches the performance of the only other higher-order topological clustering algorithm, \TPCC{} on two datasets with clear topological features, whereas \TOPF{} \emph{outperforms} \TPCC{} on datasets with more complex structure.
In addition, \TOPF{} has a consistently lower running time with better scaling for the more complex datasets, while not requiring prior knowledge on the best topological scale.
As for the other point embeddings, Node2Vec is not able to capture any meaningful topological information, whereas the performance of clustering using geometric features depends on the data set.
Furthermore, \TOPF{} outperforms PointNet \citep{qi2017pointnet} pretrained on the ShapeNetPart data set and WSDesc \citep{Li2022WSDesc} pretrained on the 3DMatch dataset \citep{zeng20163dmatch}.
This highlights that neural network architectures like PointNet need specific application-specific training data for good performance that is simply not available in all cases.

\TOPF{} is an unsupervised method for extracting interpretable topological features founded in algebraic topology and differential geometry. Neural network architectures like PointNet or WSDesc need labelled training data sets, produce hard-to interpret feature representation, but perform well on down-stream tasks with enough data available.

%
\begin{table*}
	\centering
	\scriptsize
	\caption{
		\figuretitle{Quantitative performance comparison of clustering with \TOPF{} and other features/clustering algorithms.} Four \oldstylenums{2}d and three \oldstylenums{3}d data sets of the topological clustering benchmark suite (\Cref{app:BenchmarkSuite}, cf. \Cref{fig:TCBS} for ground truth and \Cref{fig:TCBSTOPF} for labels by \TOPF).
		We ran each algorithm $20$ times and list the mean adjusted rand index (\ARI{}) with standard deviation $\sigma$ and mean running time.
		We omit $\sigma$ for algorithms with $\sigma =0$.
		Spectral clustering on \TOPF{} vectors consistently outperforms or almost matches the other algorithms while having significantly better run time than the second best performing algorithm \TPCC{}.
		Spectral Clustering (\textsc{\lowercase{SC}}), \textsc{\lowercase{DBSCAN}}\iftoggle{arxiv}{, and Agglomerative Clustering (AgC)}{} (applied on the points) are standard clustering algorithms, \textsc{\lowercase{ToMATo}} is a topological clustering algorithm \citep{Chazal2013}, Geo clusters using $12$-dimensional point geometric features extracted by \texttt{pgeof}\iftoggle{arxiv}{}{, whereas node2vec \citep{grover2016node2vec} produces node embeddings on  a $k$-nearest neighbour graph}. 
		We pretrained PointNet (\textsc{\lowercase{PN}}) \citep{qi2017pointnet} on the ShapeNetPart point cloud segmentation data set \citep{chang2015shapenet}.
		\enquote{Weakly Supervised 3D Local Descriptor Learning for Point Cloud Registration} (\textsc{wsd} \citep{Li2022WSDesc}) is pretrained on 3DMatch data 
		and produces $32$-dimensional feature vectors. 
		\textsc{dgcnn} \cite{wasserman2018topological} is pretrained on ShapeNetPart segmentation, with the parameters determined by a hyperparameter search.
		We highlight all \ARI{} scores within $\pm0.05$ of the best \ARI{} score.
	}
	\iftoggle{arxiv}{
	\begin{tabular}{llHccHcHHcHHccHccc}
	}{
	\begin{tabular}{llHccHcHHccHcccccc}
	}
		& & \TPCC * & \textbf{\textsmaller{TOPF}}& \TPCC & \TOPF * & \textsc{sc} & $k$-means & \textsc{optics} & \textsc{dbscan} & AgC & \textsc{ms} &\textsc{t}o\textsc{mat}o& Geo & node2vec&\textsc{pn}&\textsc{wsd}&\textsc{dgcnn}\\
		\toprule
\texttt{4spheres} &\ARI &  0.3$\pm$0.09 & \textbf{0.81} &0.52$\pm$0.17 & 0.63 & 0.37 & 0.39 & 0.15 & 0.00 & 0.45 & 0.20 & 0.32 & 0.20&0.00$\pm$0.00&0.30 &0.13&0.29$\pm$0.03\\
		& time (s) & 70.3 & 13.9 & 23.3 & 12.7 & 0.2 & <0.1 & 0.2 & <0.1 & <0.1 & 0.9 & <0.1 & 0.2&48.4&<0.1 &<0.1&<0.1\\ \midrule
		\texttt{Ellipses} & \ARI &  0.45$\pm$0.05 & \textbf{0.95} &0.47$\pm$0.04 & 0.89 & 0.25 & 0.55 & 0.36 & 0.19 & 0.52 & 0.29 & 0.29 & 0.81&0.02$\pm$0.00&0.50&0.35&0.70$\pm$0.00\\
		& time (s) & 17.0 &13.5 & 14.4 & 12.5 & 0.1 & <0.1 & 0.1 & <0.1 & <0.1 & 0.2 & <0.1 & 0.1&11.2&<0.1 &<0.1&<0.1\\ \midrule
		\texttt{4Circles+Grid} & \ARI &\texttt{error} & 0.70 & 0.39$\pm$0.04 &  \textbf{0.87} & \textbf{0.90} & 0.83 & 0.68 & \textbf{0.92} & \textbf{0.89} & 0.10 & 0.82& 0.41&0.01$\pm$0.00&0.55&0.06&0.75$\pm$0\\
		& time (s) & 87.6 &13.7 & 28.5 & 12.9 & 0.5 & <0.1 & 0.3 & <0.1 & <0.1 & 2.3 & <0.1 &0.3&63.8&<0.1 &<0.1&<0.1\\ \midrule
		\texttt{Halved Circle}& \ARI{} & 0.25$\pm$0.18 & \textbf{0.71} & 0.18$\pm$0.12 & 0.44 & 0.24 & 0.23 & 0.07 & 0.00 & 0.20 & 0.04 & 0.16&0.08&0.00$\pm$0.01&0.36&0.00&0.39$\pm$0.07\\
		& time (s) &16.0 & 13.6& 14.3 &  12.0 & 0.1 & <0.1 & 0.1 & <0.1 & <0.1 & 0.4 & <0.1 &0.1&18.2&<0.1 &<0.1&<0.1\\ \midrule[1pt]
		\texttt{2Spheres2Circle} & \ARI{} & \texttt{TO} & \textbf{0.94} & \textbf{0.97}$\pm$0.01& \textbf{0.93} & 0.70 & 0.47 & 0.85 & 0.00 & 0.51 & 0.84 & 0.87&0.12&0.00$\pm$0.00&0.55&\textbf{0.95}&0.84$\pm$0.18\\
		& time (s) &  \texttt{TO} & 20.7 &1662.2 & 40.0 & 1.6 & 0.1 & 1.8 & <0.1 & 0.3 & 5.9 & <0.1 &0.9&348.6&<0.1 &<0.1&0.2\\ \midrule
		\texttt{SphereinCircle} & \ARI{} &  0.90 & \textbf{0.97} &\textbf{0.98}$\pm$<0.1 & 0.86 & 0.34 & 0.02 & 0.19 & 0.00 & 0.29 & 0.00 & 0.06 &0.69&0.13$\pm$0.03&0.39&0.46&0.76$\pm$0.06\\
		& time (s) &  219.3 & 14.5 &8.0 & 13.9 & <0.1 & <0.1 & 0.1 & <0.1 & <0.1 & 0.3 & <0.1 &0.08&20.1&<0.1 &<0.1&<0.1\\ \midrule
		\texttt{Spaceship} & \ARI{} & \texttt{error} & \textbf{0.92} & 0.56$\pm$0.03 & 0.72 & 0.28 & 0.42 & 0.40 & 0.26 & 0.47 & 0.32 & 0.30&\textbf{0.87}&0.07$\pm$0.00&0.41&0.76&0.79$\pm$0.00\\
		& time (s) &  \texttt{error} &15.5 &341.8 & 0.5 & 16.7 & <0.1 & 0.2 & <0.1 & <0.1 & 0.8 & <0.1 &0.2&49.8&<0.1 &<0.1&<0.1\\ \bottomrule
		\textbf{mean} &\ARI{} & & \textbf{0.86} &0.58 & & 0.44 & 0.42 & 0.39 & 0.16 & 0.48 & 0.26 & 0.40&0.45&0.03&0.44&0.39&0.64\\
		& time (s) &  &15.1 &298.9& & 0.4 & <0.1 & 0.4 & <0.1 & <0.1 & 1.5 & <0.1 &0.3&80.0&<0.1 &<0.1&<0.1\\
		\bottomrule
	\end{tabular}
	\label{fig:quantitativeComparison}
	\vskip -0.2in
\end{table*}

\begin{figure}[t!]
	\begin{center}
		\iftoggle{arxiv}{
		\includegraphics[width=0.7\linewidth]{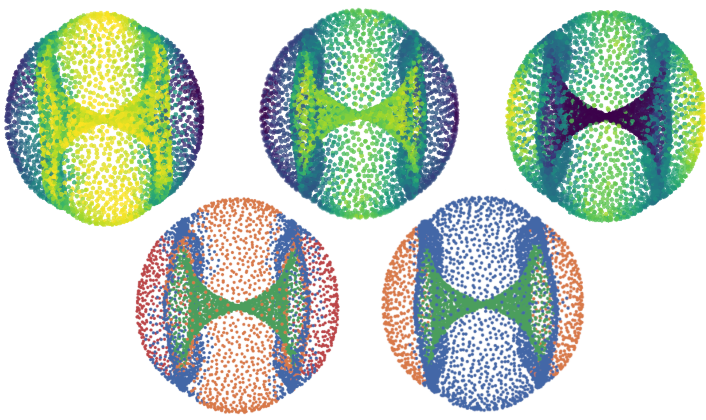}	
		}{
		\includegraphics[width=\linewidth]{figs/Cyclo8Heatmap5plots.png}
		}
		\caption{\figuretitle{\TOPF{} on a high-dimensional point cloud.}
			We used \TOPF{} on $6500$ points sampled from the $24$-dimensional conformation space of cyclooctane \cite{Martin2011}.
			We show the \textsc{\lowercase{ISOMAP}} projection from $24$ dimensions.
			\emph{Top:} Three features automatically selected by \TOPF{}.
			\emph{Bottom:} \TOPF{}-Clustering for 3 and 4 clusters.
			\TOPF{} can correctly cluster similar points according to their topological function.
		Four clusters, \TOPF{} can even identify the \emph{anomalous} points (blue) violating the manifold structure.
		}
		\label{fig:cyclo8}
	\end{center}
	\vskip -0.3 in
\end{figure}
\paragraph{Feature Generation}
In \Cref{fig:QualitativeExperiments}, we show qualitatively that \TOPF{} constructs meaningful topological features on data sets from Biology and Physics, and synthetic data, corresponding to for example rings and pockets in proteins or trajectories around different attractors in dynamical systems, (for individual heatmaps  see \Cref{fig:OldProtein}).
In \Cref{fig:cyclo8}, we show that \TOPF{} works for the \emph{high-dimensional} conformation space of cyclooctane.
\paragraph{Robustness Against Noise, Downsampling, and Sampling Heterogeneity}
\begin{figure}[tb!]
	\begin{center}
		\iftoggle{arxiv}{
		\includegraphics[width=0.8\linewidth]{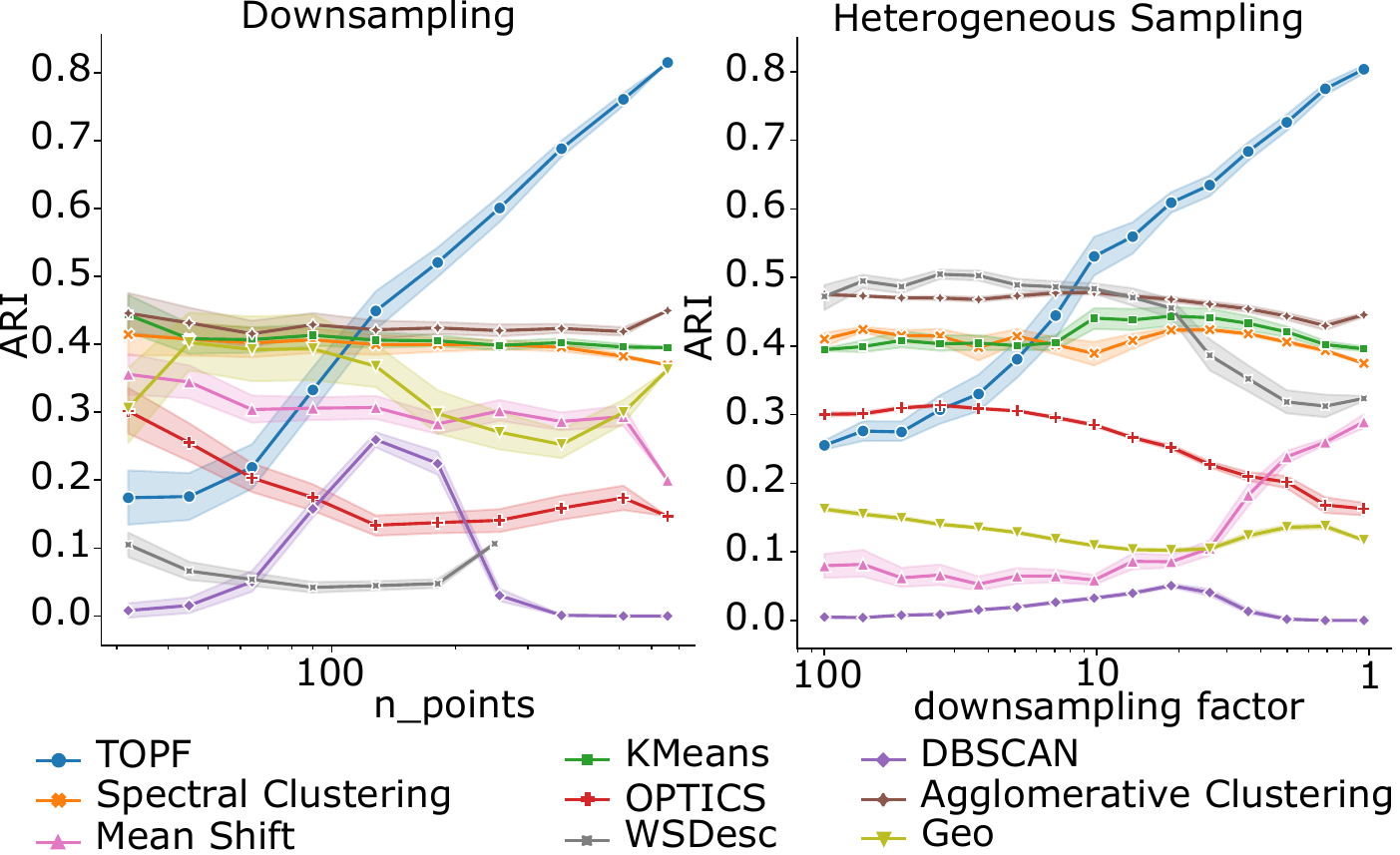}
		}{
		\includegraphics[width=\linewidth]{figs/4spherescombinedsampling.pdf}
		}
		\caption{\figuretitle{Performance of \TOPF{} Clustering under downsampling and heterogeneous sampling on \texttt{4spheres}.}
			\emph{Left} We perform random downsampling to test the performance of \TOPF{} on sparse point clouds. \TOPF{} maintains strong performance until $\sim 100$ points, which proves that \emph{\TOPF{} performs well on very sparse point clouds}. For experiments on all datasets of \TCBS{} see \Cref{fig:Downsampling}.
			\emph{Right:} We investigate the performance under heterogeneous sampling. We divide the point cloud into halves cutting up topological structures and downsample one of the halves with a factor of up to $100$.
			The experiments show that \TOPF{} still outperforms the other methods under a sampling irregularity of $1:10$, indicating a strong robustness against heterogeneous sampling.
			For experiments on more datasets of \TCBS{} see \Cref{fig:Skewedsampling}. 
		}
		\label{fig:Downsampling4spheres}
	\end{center}
	\vskip -0.2in
\end{figure}
We have evaluated the robustness of \TOPF{} against Gaussian noise on the dataset introduced in \citep{Grande:2023} and compared the results against \TPCC{}, Spectral Clustering, Graph Spectral Clustering on the graph constructed by \TPCC{}, and against $k$-means in \cref{fig:Robustness}.
We have also analysed the robustness of \TOPF{} against the addition of outliers in \cref{fig:Robustnessoutliers}.
We see that \TOPF{} performs well in both cases, underlining our claim of robustness.
Finally, in \Cref{fig:Downsampling4spheres}, we show that \TOPF{} performs well under \emph{sampling heterogeneity} and under \emph{downsampling to sparse point clouds}, while comparing \TOPF{} to the other baselines.
We posit that this is due to the robustness of $\alpha$-complexes, the chosen simplicial weights, and the general robustness of features extracted by persistent homology.
In our eyes, the above result showcases the strength of our approach incorporating ideas from topological data analysis and differential geometry, given that \emph{heterogeneous sampling} and \emph{sparse point clouds} constitute a major challenge for classical geometry learning.
\paragraph{Embedding Space of Variational Autoencoders and High-dimensional spaces}
Variational autoencoders (\VAE) are unsupervised neural networks that learn to extract a low-dimensional embedding of a high-dimensional data set.
We have trained $2$ \VAE{}s on $518$ image patches which were sampled along a topological structure on a larger image with a latent space dimension of $3$ and of $16$.
We show that running \TOPF{} on the two latent spaces as well as directly on the 8748(!)-dimensional base space can recover the \emph{topological structure} underlying the capturing process of the training set.
We visualise the feature vectors on the latent space in \Cref{fig:QualitativeExperiments} (e) and \Cref{fig:figcowhigh} and provide additional details in \Cref{fig:figcow} and \Cref{app:experiments}.

In \Cref{fig:cyclo8}, we use \TOPF{} on a high-dimensional input point cloud representing the conformation space of cyclooctane \cite{Martin2011}.
We see in the \textsc{\lowercase{ISOMAP}} projections of \Cref{fig:cyclo8} that \TOPF{} extracts reasonable features representing the topology of the conformation space.
This shows the feasability of \TOPF{} on very high-dimensional spaces.
Note that \TOPF{} even recovers the anomalous points violating the manifold hypothesis, as done in \cite{Stolz2020}.
\begin{figure}[tb!]
	\begin{center}
		\iftoggle{arxiv}{
			\includegraphics[width=\linewidth]{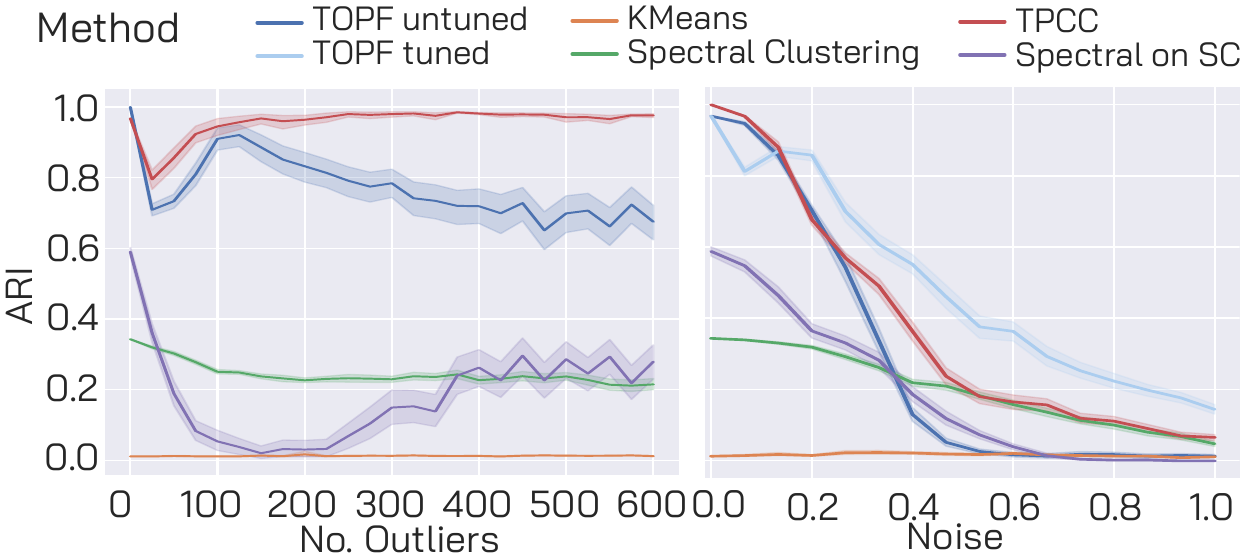}
		}{
			\includegraphics[width=0.8\linewidth]{figs/Robustness_BothICML.pdf}
		}
		\caption{\figuretitle{Performance of \TOPF{} Clustering with noise/outliers on \texttt{SphereinCircle}, 95\% \textsc{\lowercase{CI}}.}
			The radii are $3$ and $1$, the mean distance to the closest neighbour is $0.22$.
			\emph{Left:} We add i.i.d.\ Gaussian noise to every point with standard deviation indicated by the \texttt{noise} parameter.
			Even when compared with \TPCC{} on a data set specifically crafted for \TPCC{}, \TOPF{} requires significantly less information and delivers almost equal performance.
			Tuned for datasets with a high noise, the \TOPF{} outperform \TPCC{}. 
			\emph{Right:}
			We add outliers with same standard deviation as the point cloud. We then measure \ARI{} restricted to original points. Compared with \TPCC{} on a data set specifically crafted for \TPCC{}, \TOPF{} requires significantly less information and delivers matching to superior performance.
		}
		\label{fig:Robustnessoutliers}
		\label{fig:Robustness}
	\end{center}
	\vskip -0.2in
\end{figure}
\section{Discussion}

\label{sec:conclusion}
\paragraph{Limitations}

\TOPF{} can\,---\,by design\,---\,only produce meaningful output on point clouds with a \emph{topological structure} quantifiable by persistent homology.
In practice it is thus desirable to combine \TOPF{} with some geometric or other point-level feature extractor.
As \TOPF{} relies on the computation of persistent homology, its runtime increases on very large point clouds, especially in higher dimensions where $\alpha$-filtrations are computationally infeasible.
However, subsampling, either randomly or using landmarks, usually preserves relevant topological features while improving run time \citep{Perea2020}.
Finally, selection of the relevant features is a very hard problem. While our proposed heuristics work well across a variety of domains and application scenarios, only domain- and problem-specific knowledge makes correct feature selection feasible.

\begin{table}
	\centering
	\scriptsize
	\caption{
		\figuretitle{Ablation study for step 3.}
	}
	\begin{tabular}{lcc}
		Dataset&\ARI{} \TOPF{} & \makecell{\ARI{} \TOPF{}\\ without harmonic projection}\\
		\toprule
		\texttt{4spheres} & \textbf{0.81} & 0.08\\
		\texttt{Ellipses} & \textbf{0.95} & 0.53\\
		\texttt{4$\bigcirc$+Grid} & \textbf{0.70} & 0.11\\
		\texttt{Halved $\bigcirc$}& \textbf{0.71}&0.15\\\midrule
		\texttt{2Spheres2$\bigcirc$} & 0.94 & \textbf{0.95}\\
		\texttt{Spherein$\bigcirc$} & 0.97 &\textbf{1.00}\\
		\texttt{Spaceship} & \textbf{0.92} &0.49\\
		\midrule
		mean & \textbf{0.86} & 0.47\\
		\bottomrule
	\end{tabular}
	
	\label{tab:ablation}
\end{table}
\paragraph{Future Work}

The integration of higher-order \TOPF{} features into \textsc{\lowercase{ML}} pipelines that require point-level features potentially leads to many new interesting insights across the domains of biology, drug design, graph learning and computer vision.
Furthermore, efficient computation of simplicial weights leading to the provably most faithful topological point features is an exciting open problem.
\paragraph{Conclusion}

We introduced point-level features \TOPF{} founded on algebraic topology relating global structural features to local information.
We gave theoretical guarantees for the correctness of their construction and evaluated them quantitatively and qualitatively on synthetic and real-world data sets.
Finally, we introduced the novel topological clustering benchmark suite and showed that clustering using \TOPF{} outperforms other available clustering methods and features extractors.
In particular, we showed that \TOPF{} performs well both on very sparse datasets and on datasets under heterogeneous sampling.
\iftoggle{arxiv}{
\paragraph{Acknowledgments}
\textsc{\lowercase{VPG}} acknowledges funding by the German Research Council (\textsc{\lowercase{DFG}}) within Research Training Group 2236 

\textsc{\lowercase{MTS}} acknowledges partial funding by the Ministry of Culture and Science (\textsc{\lowercase{MKW}}) of the German State of North Rhine-Westphalia (\enquote{\textsc{\lowercase{NRW}} Rückkehrprogramm}) and the European Union (\textsc{\lowercase{ERC}}, \textsc{\lowercase{HIGH-HOP}}e\textsc{\lowercase{S}}, 101039827). Views and opinions expressed are however those of the author(s) only and do not necessarily reflect those of the European Union or the European Research Council Executive Agency. Neither the European Union nor the granting authority can be held responsible for them.
}{}

	\printbibliography
	\appendix
\section{Extended Background}
\label{app:extendedbackground}
\paragraph{A brief history of topology and machine learning}
Algebraic topology is a discipline of Mathematics dating back roughly to the late 19\textsuperscript{th} century \citep{poincare1895analysis}.
Starting with Henri Poincar{\'e} and continuing in the early 20\textsuperscript{th} century, the mathematical community became interested in developing a framework to capture the global shapes of manifolds and topological spaces in concise algebraic terms.
This development was partly made possible by the push towards a formalisation of mathematics and analysis, in particular, which took place inside the mathematical community in the 1800's and early 1900's (e.g.\ \cite{dedekind1888sollen, hilbert1899grundlagen, Hausdorff1908}).
The axiomatisation of analysis in the early 20\textsuperscript{th} century is an important result of this process.
~complexes. 
Over the course of the last 100 years, branching into many sub-areas like low-dimensional topology, differential topology, K-theory or homotopy theory \citep{atiyah2018k, hu1959homotopy}, algebraic topology has resolved many of the important questions and provides a comprehensive tool-box for the study of topological spaces.
These achievements  were tied to an abstraction and generalisation of concepts: topological spaces turned into spectra, diffeomorphism to homotopy equvialences and later weak equivalences, and Topologists turned to category theory \citep{eilenberg1945general}, model categories \citep{Bousfield1975} and recently $\infty$-categories \citep{lurie2006stable} as the language of choice. 

The 21\textsuperscript{st} century saw the advent and rise of topological data analysis (\TDA, \citep{Bubenik2015, chazal2021introduction}).
In short, mathematicians realised that the same notions of shape and topology that their predecessors carefully defined a century earlier were now characterising the difference between healthy and unhealthy tissue, between normal and abnormal behaviour protein behaviour, or more general between different categories in their complex data sets.

Topological data analysis is closely connected to distance- and density-based methods for data analysis like \textsc{dbscan} or tools from manifold learning like \textsc{isomap} \cite{tenenbaum2000global} or \textsc{umap} \cite{mcinnes2018umap}.
Persistent homology captures the shape of a point cloud across all scales and can be captured in persistence barcodes \cite{carlsson2004persistence}, persistent landscapes \cite{Bubenik2015} or persistence images \cite{adams2017persistence}.
In multiparameter persistent homology, an additional filtration parameter like density is included, allowing for stronger representations like graphcodes \cite{kerber2024graphcode} or \textsc{gril} \cite{xin2023gril}.
The persistent homology transform computes persistent homology over a height filtration across different directions in the data set, further uncovering geometric properties of the dataset \cite{Turner2014}.
The mapper graph is a tool to visualise high dimensional point clouds with graphs using the (persistent) homology of slices according to some generalised heigh function \cite{Singh:2007}.

\paragraph{Related Work}
\label{app:RelatedWork}

The intersection of topological data analysis, topological signal processing and geometry processing has many interesting related developments in the past few years.
On the side of homology and \TDA{}, the authors in \cite{DeSilva2009persistent} and \cite{Perea2020} use harmonic \emph{co}homology representatives to reparametrise point clouds based on circular coordinates.
This implicitly assumes that the underlying structure of the point cloud is amenable to such a characterization.
Although circular coordinates are orthogonal to the core goal of \TOPF{}, the approaches share many key ideas and insights.
In \cite{Basu2022harmonic,Gurnari2023probing}, the authors develop and use harmonic persistent homology and provide a way to pool features to the point-level.
However, their focus is not on providing robust topological point features and their approach includes no tunable homology feature selection across dimensions, no support for weighted simplicial complexes, and they only construct the simplicial complex at birth. 
In their paper on topological mode analysis, \cite{Chazal2013} use persistent homology to cluster point clouds.
However, they only consider $0$-dimensional homology to base the clustering on densities and there is no clear way to generalise this to higher dimensions.

On the more geometric-centred side, \cite{Ebli2019} already provide a notion of harmonic clustering on simplices, \cite{Chen2021,chen2021helmholtzian} analyse the notion of geometry and topology encoded in the Hodge Laplacian and its relation to homology decompositions, \cite{schaub2020random} study the normalised and weighted Hodge Laplacian in the context of random walks, and \cite{Grande:2023} use the harmonic space of the Hodge Laplacians to cluster point clouds respecting topology.
Finally, a persistent variant of the Hodge Laplacian is used to study filtrations of simplicial complexes \citep{memoli2022persistent}.

There are other constructions of isometry-invariant local shape descriptors, as for example proposed in \cite{memoli2011gromov}.
However, their aim is not relate the global topology back to the local features.

 In \cite{moor2020topological}, the authors basically set up a topological loss functions that ensures that topological features of the original point cloud are preserved in the latent space of the autoencoder. This is different from the approach \TOPF{} takes, as \TOPF{} tries to detect and extract topological features. Similar things can be said about topological node-2-vec \cite{hiraoka2024topological} and some experiments in \cite{carriere2021optimizing} or some experiments of \cite{carriere2024diffeomorphic}: They all try to come up with ways to preserve global topological structure while embedding a point cloud, while we on the other hand construct the embedding based on the relationship between the points and the global topological structure.

On graphs, \cite{arafat2024witnesses} construct topological representations of graphs to study adversarial graph learning.
They construct a local topological features for nodes and global topological features for the entire graph.
In contrast to our approach, they do not relate the global topological features back to the local level, but rather consider the local topology on the point level and the single global topological summary separately.
Furthermore when constructing their features, they use vision transformers on the persistence images of their witness filtrations, which does not allow for the same amount of interpretability as \TOPF{} does.

In \cite{Grande:2023}, the authors have introduced \TPCC{}, the first method to cluster a point cloud based on the higher-order topological features encoded in the data set.
However, \TPCC{} is \textbf{(i)} computationally expensive due to extensive eigenvector computations,
\textbf{(ii)} depending on high-dimensional subspace clustering algorithms, which are prone to instabilities and errors,
\textbf{(iii)} sensitive to the correct choice of hyperparameters,
\textbf{(iv)} requiring the topological true features and noise to occur in different steps of the simplicial filtration, 
and it \textbf{(v)} solely focussed on clustering the points rather than extracting relevant node-level features.
This paper solves all the above by completely revamping the \TPCC{} pipeline, introducing several new ideas from applied algebraic topology and differential geometry.
The core insight is: When you have the time to compute persistent homology with generators on a data set, you  get the topological node features with similar computational effort.

\section{Theoretical Considerations}

\label{app:TheoreticalConsiderations}

\subsection{More details on \VR{} and $\alpha$-filtrations}
Vietoris--Rips complexes are easy to define, approximate the topological properties of a point cloud across all scales and computationally easy to implement.
However for moderately large $r$, the associated \VR~complex contains a large number of simplices\,---\,up to $\binom{|X|}{n+1}$ $n$-simplices for large enough $r$\,---\,leading to poor computational performance for any downstream task on some large point clouds.
One way to see this is the following: After adding the first edge that connects two components or the final simplex that fills a hole in the simplicial complex the \VR~complex keeps adding more and more simplices in the same area that keep the topology unchanged. 
One way to mitigate this problem is to pre-compute a set of simplices that are able to express the entire topology of the point cloud.
For a point cloud $X\subset\R^n$, the $\alpha$-filtration consists of the intersection of the simplicial complexes of the \VR{}~filtration on $X$ with the (higher-dimensional) Delaunay triangulation of $X$ in $\R$.
Due to algorithmic reasons, the filtration value of a simplex is then related to the radius of the circumscribed sphere instead of the maximum pair-wise distance of vertices, where the filtration value $\alpha^X(\sigma)$ of a simplex $\sigma$ is given by the minimum $\varepsilon$ such that $\sigma$ is contained in $\alpha_\varepsilon(X)$, i.e.\ $\alpha^X(\sigma)\coloneqq \inf \{\varepsilon\in \R:\sigma \in \alpha_\varepsilon(V)\}$.
This reduces the number of required simplices across all dimensions to $\smash{O(|X|^{\lceil n/2\rceil})}$.
However, the Delaunay triangulation becomes computationally infeasible for larger $n$.
\begin{definition}[$n$-dimensional Delaunay triangulation]
	Given a set of vertices $V\subset\R^n$, a Delaunay triangulation $DT(V)$ is a triangulation of $V$ such that for any $n$-simplex $\sigma_n\in DT(V)$ the interior of the circum-hypersphere of $\sigma_n$ contains no point of $DT(V)$.
	A triangulation of $V$ is a \tSC{} $\SC$ with vertex set $V$ such that its geometric realisation covers the convex hull  of $V$ $\operatorname{hull} (V)=|\SC|$ and we have for any two simplices $\sigma$, $\sigma'$ that the intersection of geometric realisations $|\sigma|\cap|\sigma'|$ is either  empty or the geometric realisation $|\hat{\sigma}|$ of a common sub-simplex $\hat{\sigma}\subset \sigma,\sigma'$.
\end{definition}
If $V$ is in general position, the Delaunay triangulation is unique and guaranteed to exist \citep{delaunay1934sphere}.
We will now first introduce a slightly simpler version of the $\alpha$-filtration, the $\alpha^*$-filtration.
\begin{definition}[$\alpha^*$-complex of a point cloud]
	\label{def:alphastar}
	Given a finite point cloud $X$ in real space $\R^n$, the $\alpha^*$-complex $\alpha^*_\varepsilon(X)$ is the subset of the $n$-dimensional Delaunay triangulation $DT(X)$ consisting of all $k$-simplices $\sigma_k\in DT(X)$ with a radius $r$ of its circumscribed $(k-1)$-sphere with $r\le \varepsilon$ for all $k\le n$.
\end{definition}
The $\alpha^*$ and $\alpha$-filtration share the same set of simplices and agree on the filtration values of the simplices on all top-dimensional simplices and all other simplices which are \emph{Gabriel} simplices.
\begin{definition}[Gabriel Simplex]
	Given a set of vertices $V\subset \R^n$ and its Delaunay triangulation $DT(V)$. A simplex $\sigma\in DT(V)$ is called \emph{Gabriel} if there exists no point $v\in V$ such that $v$ is contained in the interior of the circumsphere of $\sigma$.
\end{definition}

\begin{definition}[$\alpha$-complex of a point cloud, \citep{kerber20133d, edelsbrunner2011alpha}]
	\label{def:alpha}
	Given a finite point cloud $X$ in real space $\R^n$, the $\alpha$-complex $\alpha_\varepsilon(X)$ is the $\alpha^*$-complex of $V$, except that the filtration value $\alpha(\sigma)$ of all non-Gabriel simplices $\sigma$ with associated points $X$ in the interior of the circumsphere of $\sigma$ is given by $\min_{x\in X}\alpha(\sigma\cup\{x\})$.
\end{definition}
We note that due to the definition of the Delaunay triangulation, all $n$-simplices are Gabriel simplices and hence this is well-defined.
This is an equivalent formulation of the original definition, which was for example shown in \cite{kerber20133d}.
We chose to go with the above formulation, as it is the form used in implementations of $\alpha$-filtrations.

We note two things: \emph{(i)} The $\alpha^*$-filtration and $\alpha$-filtration have fewer simplices than the \VR{} filtration and \emph{(ii)} the filtration values of individual simplices differ between the $\alpha^*$, the $\alpha$ and the \VR{} filtration.
In particular, the order of the filtration values can be different across the two types of filtration.
For the $1$-skeleton, the $\alpha^*$ and \VR{} filtrations are equivalent:
The \VR{} filtration value of an edge is its length $l$, whereas its $\alpha^*$-filtration value is the radius $r=l/2$ of the associated $0$-sphere consisting only of the two vertices.
\subsection{Differential forms and a continuous analogue of \TOPF{}}
\label{app:differentialforms}
\vincent{Add discussion of Hodge Laplacian vs Combinatorial Laplacian paper/discussion.}
In this section, we will discuss how a continuous analogue of \TOPF{} on Riemannian manifolds would look like.
Simply considering the simplicial homology of the manifold, as we did in the case of a discrete simplicial complex, is however not sufficient:
Harmonic cycles and cocycles don't exist in the simplicial chain complex of the manifold.
This is because the space of chains in simplicial homology on manifolds is infinite-dimensional.
Thus, we cannot canonically identify the space of chains with its dual, the space of cochains.
From an intuitive point of view, a harmonic chain would need to take infinitesimal small values on every simplex  which is not allowed.
Rather, the right language to formalise harmonic representatives is \emph{Differential Geometry, Hodge Theory, and harmonic forms}.
Dodziuk proved in \cite{dodziuk1976finite} that the discrete Hodge Laplacian as used in this paper converges to the Hodge Laplacian on differential forms which are arbitrary-dimensional integrands over manifolds.

There is a beautiful connection between differential geometry and algebraic topology:
A theorem by de Rham states that given a Riemannian manifold $M$, the real-valued homology $H_k(M;\R)$ with coefficients in $\R$ is isomorphic to the de Rham cohomology $H^k_\text{dR}(M;\R)$ on differential forms on $M$ for arbitrary $k\in\Z_{\ge 0}$, some form of cohomology defined via the chain complex of vector spaces of differential $k$-forms on $M$.
We can define an analogue of the discrete Hodge Laplacian on differential forms by the differential induced via exterior derivates $d$ and its adjoint $\sigma$:
\[
\Delta \coloneqq \delta d + d \delta
\]
The kernel $\ker \Delta$ of the Hodge Laplacian is called the space of harmonic forms.
Similar to the discrete case, the Hodge theorem now gives us a natural vector space isomorphism between the space of harmonic $k$-forms on $M$, $\mathcal{H}_k(M)$, and the $k$\textsuperscript{th} real-valued homology group 
\[
H_k(M;R)\cong \mathcal{H}_k(M).
\]
We can rephrase this as follows: the harmonic forms can be seen as unique and natural homology representatives.

Let us now get some intuition for what this means for continuous \TOPF{} features:
In dimension $0$, $0$-forms are functions $f\colon M\to R$.
In this case, we can simply take their value $f(x)$ at $x\in M$ as the corresponding point feature at $x$.
In dimension $1$, there is a correspondence between $1$-forms and vector fields on $M$. In this case, we can take the norm of the vector field that corresponds to a given harmonic form at a point $x$ via the map $x\mapsto|v(x)|$.

In general in dimension $k$, this is more complicated.
Luckily, as we are interested in point-features, we can do all computations point-wise.
We consider a point $x\in M$ and a harmonic form $\omega$.
At point $x$, $\omega$ determines an element in the exterior algebra on the dual of the tangent space of $M$ at $x$, i.e\ $\omega_x\in\bigwedge^kT^{*}_xM$ and we need to define a norm on this space.
An orthonormal basis on $T^{*} _x M$, $e_1(x),\dots,e_n(x)$ gives rise to a basis consisting of elements $e_{i_1}(x)\wedge ...\wedge e_{i_k}(x)$ of the exterior algebra. We can now evaluate $\omega_x$ on the elements of this basis $\omega_x (e_{i,1}(x)\wedge ...\wedge e_{i,k}(x))$ and take the square root of the sum of squares of the individual results

\[
\texttt{TOPF}_\omega\colon x\mapsto \sqrt{\sum_{1\le i_1<\dots<i_k\le n}\omega_x\left(e_{i_1}(x)\wedge \dots \wedge e_{i_k}(x)\right)^2}.
\]
As one can show, this result is independent of the choice of \textsc{onb} of $T^*_xM$ and gives a point-wise norm of the differential forms.

In the discrete case discussed in this paper, however, we don't evaluate on all basis elements as described above, but rather on all $k$-simplices adjacent to $x$.
These $k$-simplices can thus be seen as samples from the exterior algebra of $x$.
This sampling induces some differences to the continuous case, which we address with the post-processing procedure as described in the paper.

\subsection{Fixing real lifts of homology generators in $\mathbb{Z}/3\mathbb{Z}$-coefficients}
\label{app:fixcycles}
\begin{figure}[ht!]
	\begin{center}
		\includegraphics[width=0.4\linewidth]{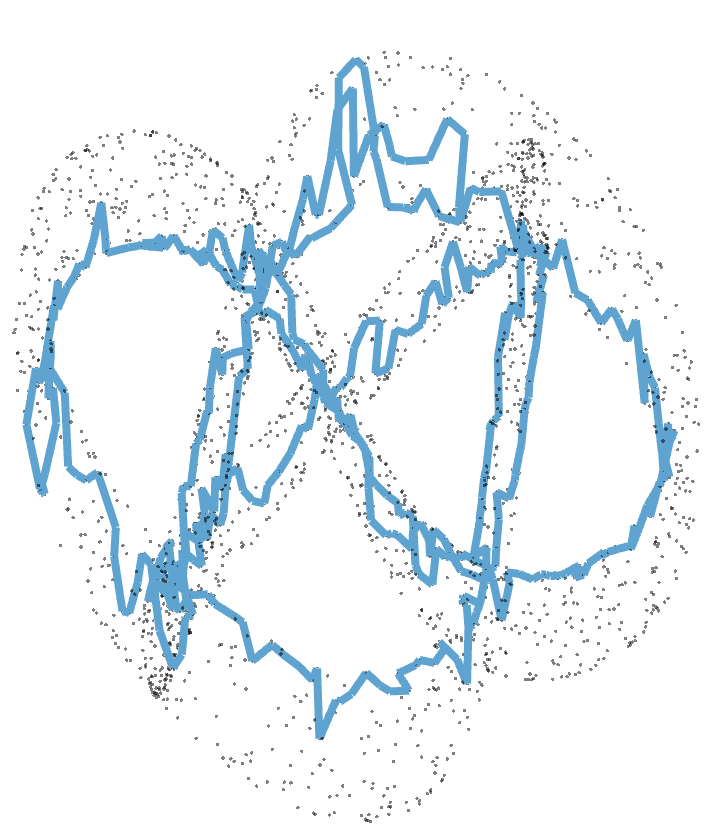}
		\includegraphics[width=0.4\linewidth]{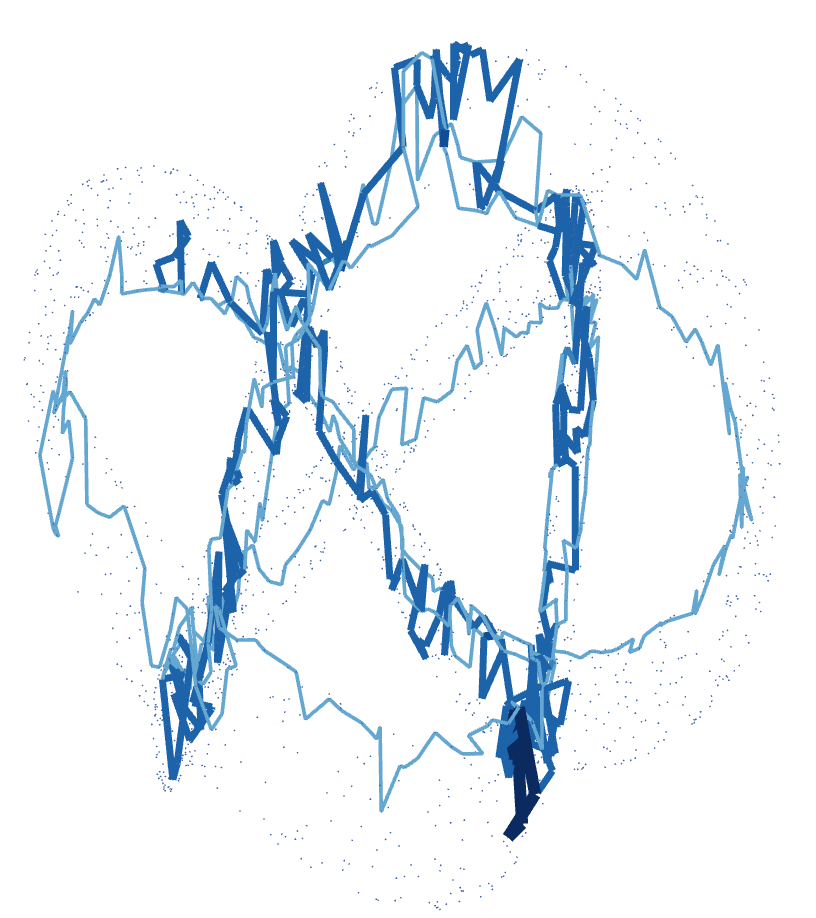}
		\caption{\figuretitle{$\ZthreeZ$-homology generator with \enquote{faulty} lift to $\R$-coefficients.}
			\emph{Left:} Initial lift produced in $\ZthreeZ$-coefficients.
			\emph{Right:} Fixed lift with \emph{light blue} edges representing the original lift with coefficients in $\{-1,1\}$ and \emph{dark blue} edges representing the fix with coefficients in $3\mathbb{Z}$.
			The underlying data is the trefoil knot embedded in $\R^3$, as generated by \cite{Perea2023}.
		}
		\label{fig:faultylifts}
	\end{center}
\end{figure}
In step \textbf{3.} of the \TOPF{} algorithm in \Cref{sec:algorithm}, we attempt to lift a homology generator with simplices $\sigma_j\in \Sigma_k$ and coefficients $c_j$ in $\mathbb{Z}/3\mathbb{Z}$-coefficients to a homology generator in $\mathbb{R}$-coefficients using the embedding
\[
e_k^i(\sigma)\coloneq\begin{cases}	
	\iota(c_j)&\exists \hat{\sigma}_j\in \Sigma_k^i:\sigma = \hat{\sigma}_j\\
	0 &\text{else}
\end{cases}.
\]
where $\iota\colon\Z/3\Z$ is the map induced by the canonical inclusion of $\{-1,0,1\}\hookrightarrow\R$.
Thus, we know that $e_k^i$ is a cycle modulo $3$: $\bound_{k-1}e_k^i\equiv 0\mod 3$.
A-prior however, there is no reason why this equality should hold in $\R$ as well.
As it turns out, in virtually all examples in practice, we already have $e_k^i\in \ker \bound_{k-1}$ which is very convenient.
The same phenomenon occurs for cocycles as well, see \cite{DeSilva2009persistent}.
We will now assume to be in a rare case of $e_k^i\not\in \ker \bound_{k-1}$.
We can now attempt to fix this using a linear integer program finding $y\in \mathbb{Z}^{|\SC_k|}$ such that
\[
3\bound_{k-1} y= \bound_{k-1}e_k^i.
\]
And then we set $\text{fix}(e_k^i)\coloneqq e_k^i-3y$.
If this linear program is not feasible, $e_k^i$ has no integer/real lift and \TOPF{} will skip the associated topological feature.
In \Cref{fig:faultylifts}, we give a visual example of a faulty lift and a fix obtained by the above procedure.
\section{Theoretical guarantees}

\label{sec:theoreticalguarantee}
In this section, we want to investigate possible theoretical guarantees for \TOPF{} computed on idealised datasets.
Instead of directly working with the $\alpha$-complexes, we will first prove guarantees for the closely related construction of $\alpha^*$-complexes.
$\alpha^*$ filtrations (\Cref{def:alphastar}) are closely related to $\alpha$-filtrations (\Cref{def:alpha}) used in the implementation and agree on the filtration values on most of the simplices, have however nicer properties.
Finally, we give conditions on when this result applies to the $\alpha$-filtrations, which is relevant to the actual implementation of \TOPF.
\begin{restatable}[Topological Point Features of Spheres]{theorem}{TheoreticalGuarantee}
	\label{thm:TOPFsphere}
	Let $X$ consist of at least $(n+2)$ points (denoted by $S$) sampled uniformly at random from a unit $n$-sphere in $\R^{n+1}$ and an arbitrary number of points with distance of at least $2$ to $S$.
	When we now consider the $\alpha^*$-filtration on this point cloud, with probability $1$ we have that \textbf{(i)} there exists an $n$-th persistent homology class generated by the $n$-simplices on the convex hull of $S$, 
	and \textbf{(ii)} the support of the associated topological point feature (\TOPF) $\mathcal{F}^*_n$ is precisely $S$: $\operatorname{supp}(\mathcal{F}^*_n)=S$.
	\textbf{(iii)} The same holds true for point clouds sampled from multiple $n_i$-spheres if the above conditions are met on each individual sphere.
	\textbf{(iv)} If there are no other points contained in or on the circumspheres of the  $n$-simplices of the convex hull of $S$, then the same above holds true for \TOPF{} computed using the $\alpha$-filtration.
	\textbf{(v)} More generally, for any contractible subset $M$ of the $(n+1)$-simplices such that all $n$-simplices on the boundary fulfill the above condition, this holds when using the $\alpha$-filtration, the discussed $n$-simplices (i) and the subset of points contained in $M$ (ii).
\end{restatable}
We will give a proof of this after the following remark.
\begin{remark}
	The key idea of the proof is to use that $\alpha^*$-filtrations and partially $\alpha$-filtrations assign the filtration value based on the radius of the circumsphere of the $k$-simplex.
	Because all points in $S$ lie on the same $n$-sphere, with probability $1$ we can write down the filtration value of the $(n+1)$-simplices explicitly.
	This is of course a very idealised setting.
	However in practice, datasets with topological structure consist in a majority of cases of points sampled with noise from deformed $n$-spheres.
	The theorem thus guarantees that \TOPF{} will recover these structural information in an idealised setting.
	Experimental evidence suggests that this holds under the addition of noise as well which is plausible as harmonic persistent homology is robust against some noise \citep{Basu2022harmonic}.
	For the $2D$ setting, condition \textbf{(iv)} is equivalent to assuming that there is no line through the centre of the circle such that one half plane does not contain any points of $S$.
\end{remark}
We will now give the proof of the theorem that guarantees that \TOPF{} works. We discuss the assumptions of the theorem in \Cref{comment:IdealisedSetting}.

\begin{proof}
	Assume that we are in the scenario of the theorem.
	Now because the $n$-volume of $(n-1)$-submanifolds is zero, we have that with probability $1$ the points of $S$ don't lie on a single $(n-1)$ sphere inside the $n$-sphere.
	Let us now look at the $\alpha$-filtration of the simplices in $S$:
	Recall that the filtration values of a $k$-simplex is given by the radius of the $(k-1)$-sphere determined by its vertices.
	Because all of the $(n+1)$-simplices $\sigma_{n+1}$ with vertices $V\subset S$ in $S$ lie on the same unit $n$-sphere $S_n$, they all share the filtration value of $\alpha(\sigma_{n+1}) = 1$.
	By the same argument as above, with probability $1$ there are no $(n+1)$ points in $S$ that lie on an \emph{unit} $(n-1)$-sphere.
	Thus all of the $n$-simplices $\sigma_n$ lie on $(n-1)$-spheres $S_n$ with a radius $r<1$ smaller than $1$ and hence have a filtration value $\alpha(\sigma_n)$ smaller than $1$.
	Let 
	\[
	b\coloneq \max \left(\left\{ \alpha(\sigma_n) : \sigma_n \subset \partial \operatorname{hull}(S)\right\}\right)
	\]
	be the maximum filtration value of an $n$-simplex on the boundary of the convex hull of $S$.
	Then, a linear combination $g$ of the $n$-simplices of the boundary of the convex hull of $S$ with coefficients in ${\pm1}$ is a generator of a persistent homology class with life time $(b,1)$ (this follows from the fact that $n$-spheres and their triangulations are orientable).
	This proves claim \textbf{(i)}.
	
	Because of the assumption that all points not contained in $S$ have a distance of at least $2$ to the points in $S$, all $(n+1)$-simplices $\sigma_{n+1}$ with vertices both in $S$ and its complement in $X$ will have a filtration value $\alpha(\sigma_{n+1})\ge 1$ of at least $1$.
	Recall that all $(n+1)$-simplices $\sigma_{n+1}\subset S$ with vertices inside $S$ have a filtration value of $\alpha(\sigma_{n+1})=1$. Thus the adjoint of the $n$-th boundary operator $\bound_n^\top$ is trivial on the homology generator $g$ for a simplicial complex constructed during $(b,1)$.
	Thus, we have that for the $n$-th Hodge Laplacian
	\[
	L_n g =\bound_{n-1}^\top\bound_{n-1}g+\bound_n\bound_{n}^\top g = 0+0 = 0
	\]
	and hence $g$ is a harmonic generator for the entire filtration range of $(b,1)$.
	Claim \textbf{(ii)} and \textbf{(iii)} then follow from the construction of the \TOPF{} values.
	
	For claim \textbf{(iv)}, it suffices to notice that the $n$-simplices $\sigma$ on the convex hull are precisely non-Gabriel simplices iff there exist a point $x\in S$ lying inside the circumsphere of $\sigma$.
	Because the assumption of part \textbf{(iv)} excludes this case, all $n$-simplices are Gabriel simplices and the $\alpha$-filtration values (\Cref{def:alpha}) and $\alpha^*$-filtration values (\Cref{def:alphastar}) agree on the $n$-simplices and $(n+1)$-simplices of the Delaunay triangulation.
	Because these are the only considered filtration values for the proof, the claim follows.

	Finally for part \textbf{(v)}, we see that the same argument now applies to the simplices on the boundary, consisting of $n$-simplices of $M$, consisting of $(n+1)$-simplices.
	Because $M$ is contractible, it is connected and has trivial homology. Thus, its boundary is homotopy equivalent to an $n$-sphere.
	Because in this case only the points $X'$ on the boundary of $M$ are contained in the generating $n$-simplices of the considered homology class, the associated \TOPF{} feature will have support on $X'$.
	This concludes the proof.
\end{proof}
\begin{remark}
	\label{comment:IdealisedSetting}
	The setting of the theorem is a very special setting.
	In general on data, harmonic chains have entries with absolute values somewhere in $[0,1]$.
	In the setting of the theorem however, the harmonic representative will take values in ${-1,0,1}$.
	We will briefly explain why this is the case:
	Consider a $k$-homology representative $r$ with simplex-values in $\{-1,0,1\}$.
	We consider the harmonic projection $h$ of $r$ into the harmonic space.
	Then, for a $k$-simplex $\sigma$, we have that $h(\sigma)=\pm 1$ iff $r(\sigma) = 1$ \textbf{and} $\sigma$ is not the face of any $(k+1)$-simplices.
	
	This can for example be seen by representing $h$ as the difference of $r$ and its gradient and curl parts $h=r-r_\text{grad}-r_\text{curl}$. Because $r$ is already a cycle, $\bound_kr=0$ and thus $r_\text{grad}=0$. Now, the curl part can be written as stemming from a signal on the the $k+1$-simplices, $r_\text{curl}=B_{k+1}x_\text{curl}$. However, because $\sigma$ is not the face of a $k+1$-simplex, $r_\text{curl}(\sigma)=0$ and thus $h(\sigma) = r(\sigma)$.
	
	The setting of the theorem precisely constructs a case where we have $k$-simplices and no $(k+1)$-simplices in the relevant part of the filtration.
	This is the case because in an $\alpha$-filtration, the filtration value of a $k$-simplex is the radius of its circumscribed $(k-1)$-sphere.
	All relevant $(k+1)$-simplices in the theorem lie on the $k$-unit sphere, and thus have filtration value $1$.
	This is however an idealised setting.
	Because we construct the simplicial complex to compute the harmonic representative somewhere in the middle (determined by the interpolation coefficient) between the birth and death time of the homology class, empirically the majority of the $k$- simplices of the $k$-homology generators are faces of $(k+1)$-simplices.
	In this case, the harmonic representative smooths out the feature.
\end{remark}
\section{Topological Clustering Benchmark Suite}
\label{app:BenchmarkSuite}
We introduce seven point clouds for topological point cloud clustering in the topological clustering benchmark suite (\TCBS).
Some of these point clouds have been adapted from \cite{Grande:2023}.
The ground truth and the point clouds are depicted in \Cref{fig:TCBS}.
The point clouds represent a mix between $0$-, $1$- and $2$-dimensional topological structures in noiseless and noisy settings in ambient $2$-dimensional and $3$-dimensional space.
Point clouds \texttt{Ellipses} and \texttt{spaceship} already incorporate heterogeneous sampling/densities.
The results of clustering according to \TOPF{} can be found in \Cref{fig:TCBSTOPF}.

We constructed the benchmark by sampling from topologically different shapes with varying sampling density in different ambient dimensions. For example for the point cloud \texttt{2Spheres2Circles}, we combined points sampled from two spheres and two circles. We then divided the point cloud into four ground truth clusters, one for each of the two spheres and two circles and assigned every point the cluster corresponding to the object it was sampled from. Thus, the ground truth labels of a point corresponds to the topological structure it was sampled from.
\iftoggle{arxiv}{}{
We provide functions to resample some of the point clouds with variable point numbers and provide a method to provide sampling heterogeneity as described in \Cref{fig:Skewedsampling}.
}
\begin{figure}[tb!]
	\begin{center}
		\resizebox{\textwidth}{!}{
			\includegraphics[height=5cm]{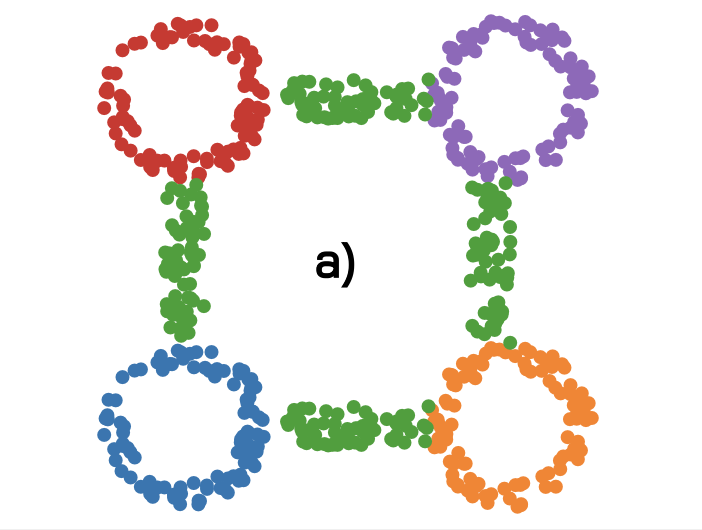}
			\includegraphics[height=5cm]{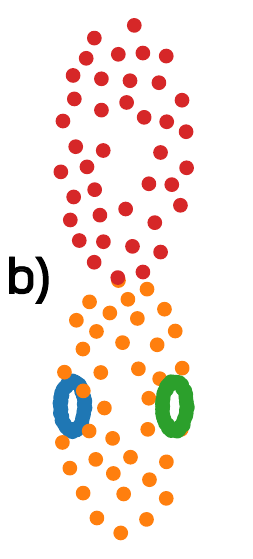}
			\includegraphics[height=5cm]{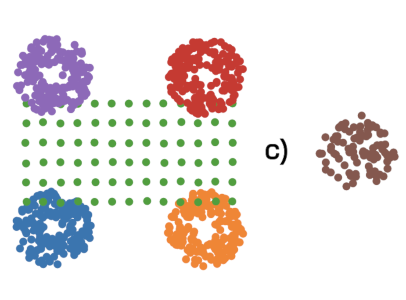}
			\includegraphics[height=5cm]{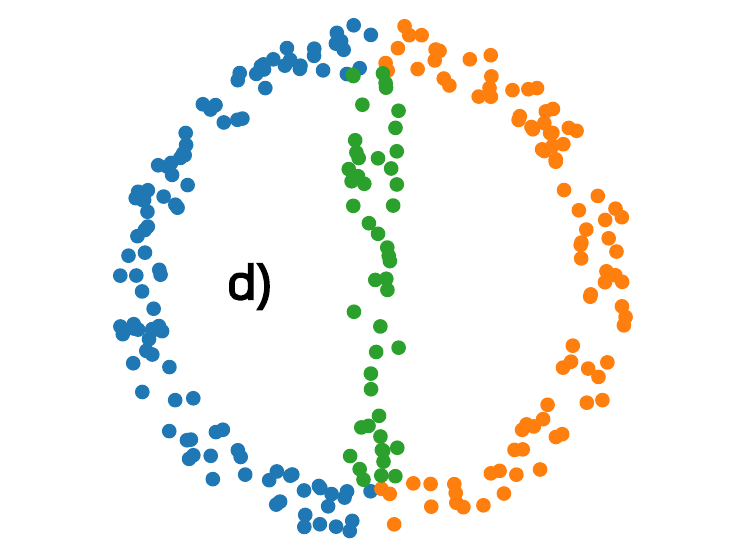}
		}
		\resizebox{\textwidth}{!}{
			\includegraphics[height=5cm]{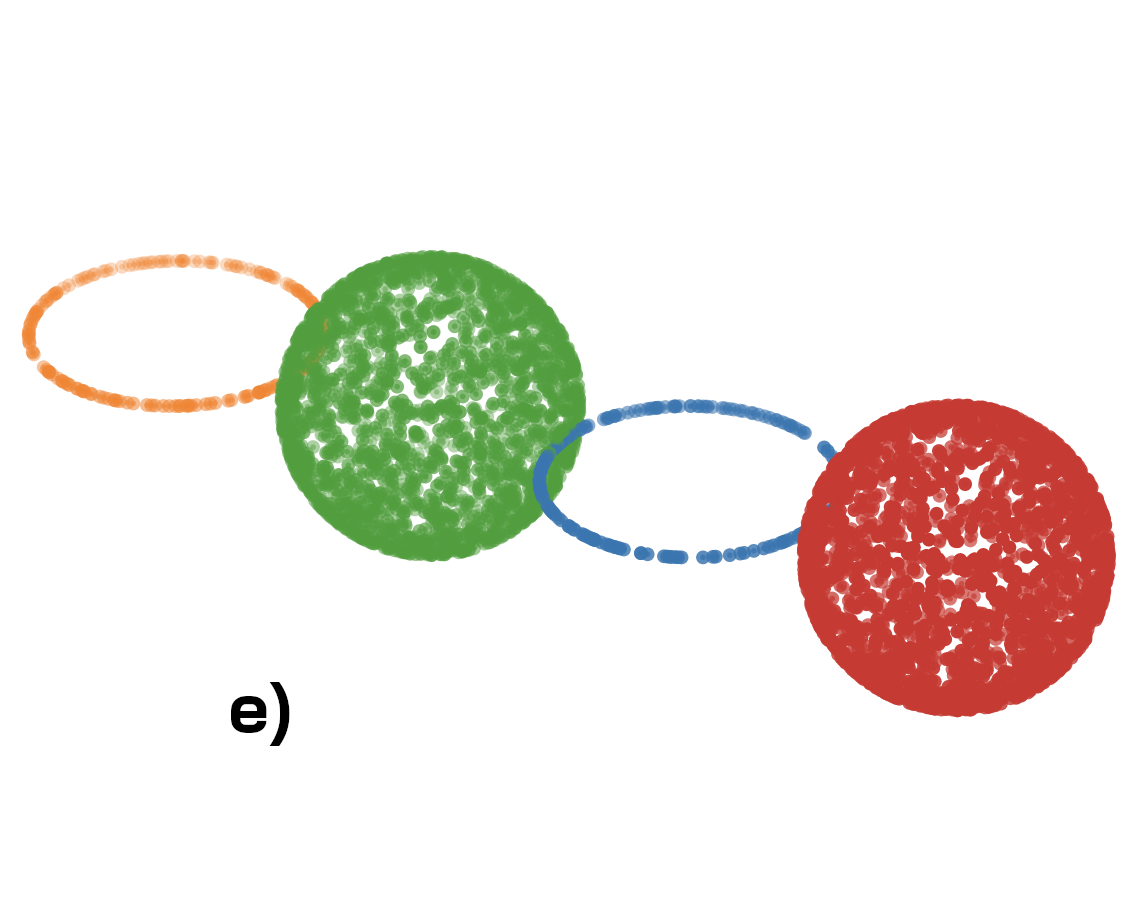}
			\includegraphics[height=5cm]{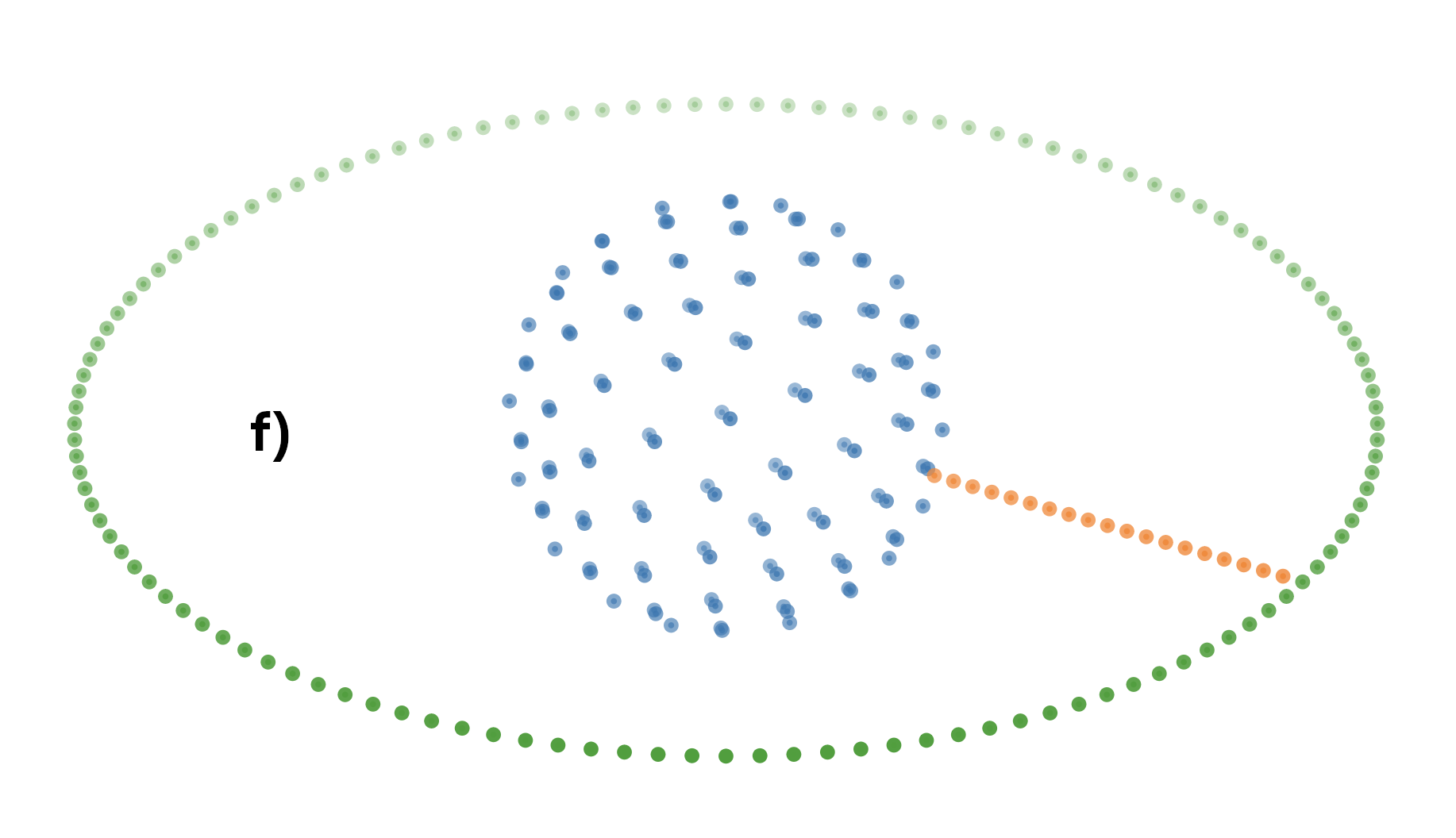}
			\includegraphics[height=5cm]{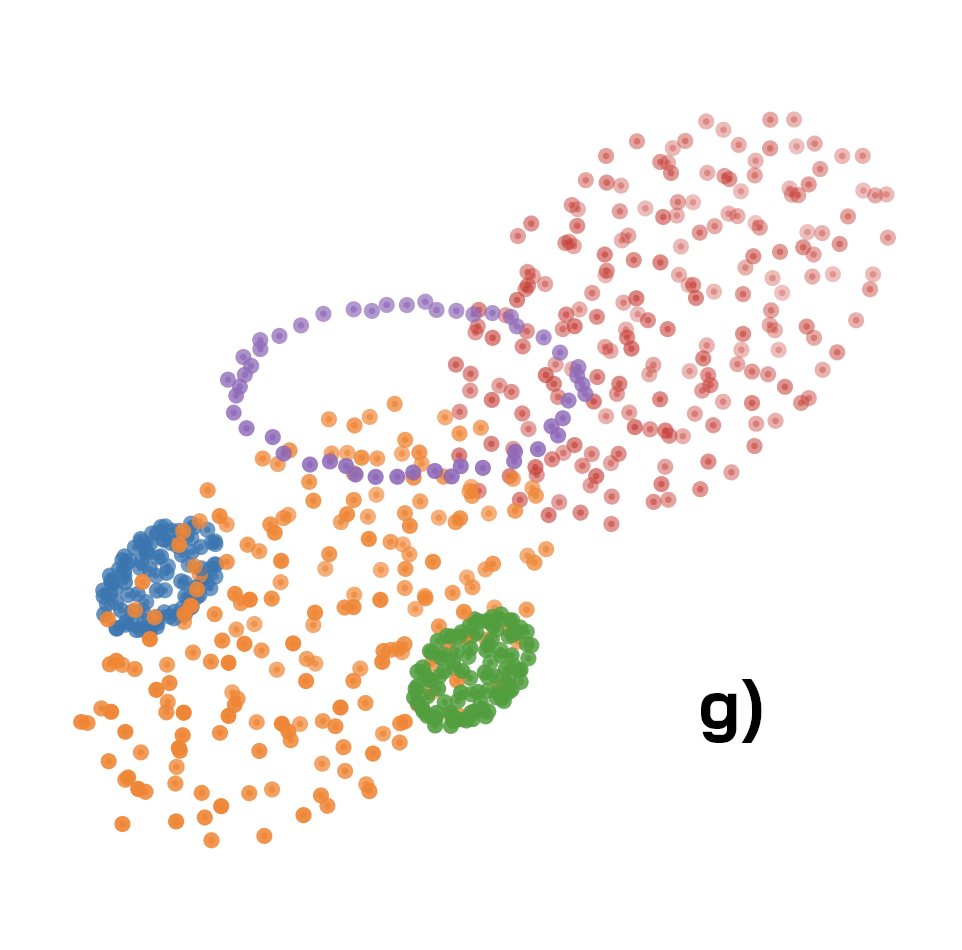}
		}
		\caption{\figuretitle{Data sets of the Topological Clustering Benchmark Suite (\TCBS) with true labels.}
			\emph{Top:} $2D$ data sets. \emph{From left to right:}
			\emph{a):} \texttt{4Spheres} (656 points),
			\emph{b):} \texttt{Ellipses} (158 points),
			\emph{c):} \texttt{Spheres+Grid} (866 points),
			\emph{d):} \texttt{Halved Circle} (249 points).
			\emph{Bottom:} $3D$ data sets. \emph{From left to right:}
			\emph{e):} \texttt{2Spheres2Circles} (4600 points),
			\emph{f):} \texttt{SphereinCircle} (267 points),
			\emph{g):} \texttt{spaceship} (650 points).}
		\label{fig:TCBS}
	\end{center}
\end{figure}

\begin{figure}[tb!]
	\begin{center}
		\resizebox{\textwidth}{!}{
			\includegraphics[height=5cm]{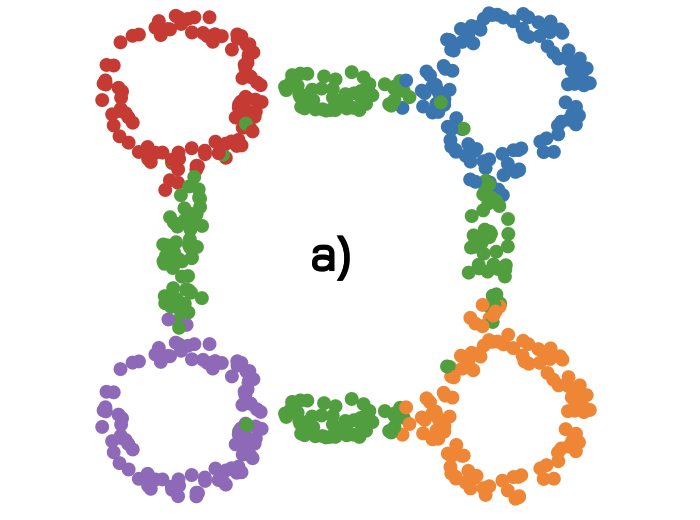}
			\includegraphics[height=5cm]{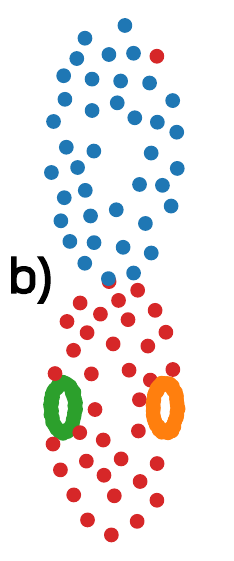}
			\includegraphics[height=5cm]{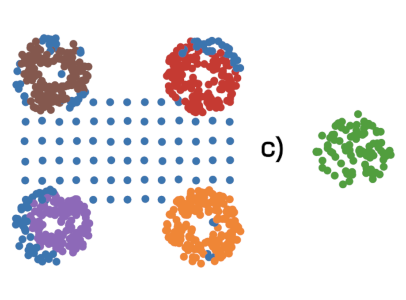}
			\includegraphics[height=5cm]{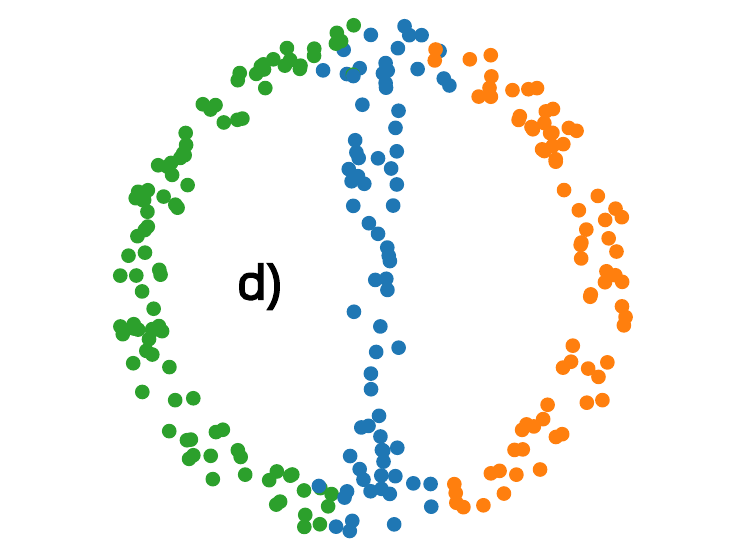}
		}
		\resizebox{\textwidth}{!}{
			\includegraphics[height=5cm]{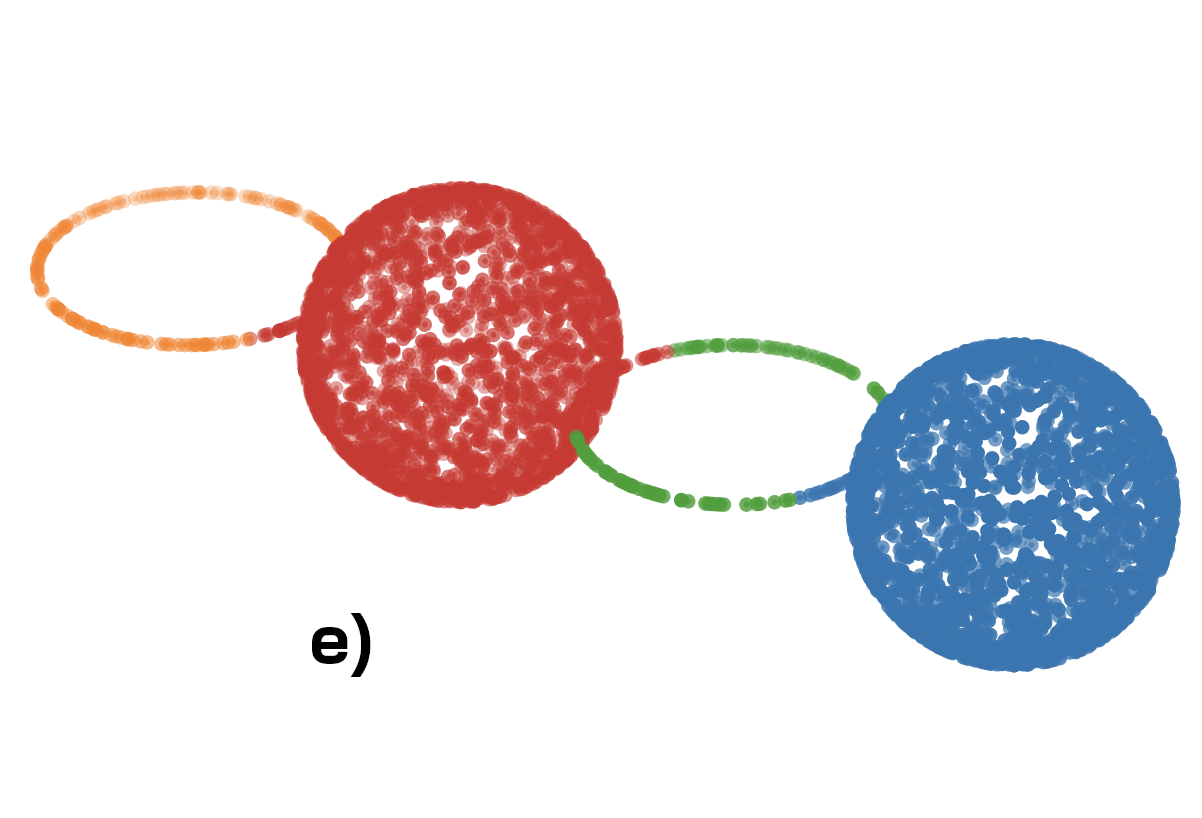}
			\includegraphics[height=5cm]{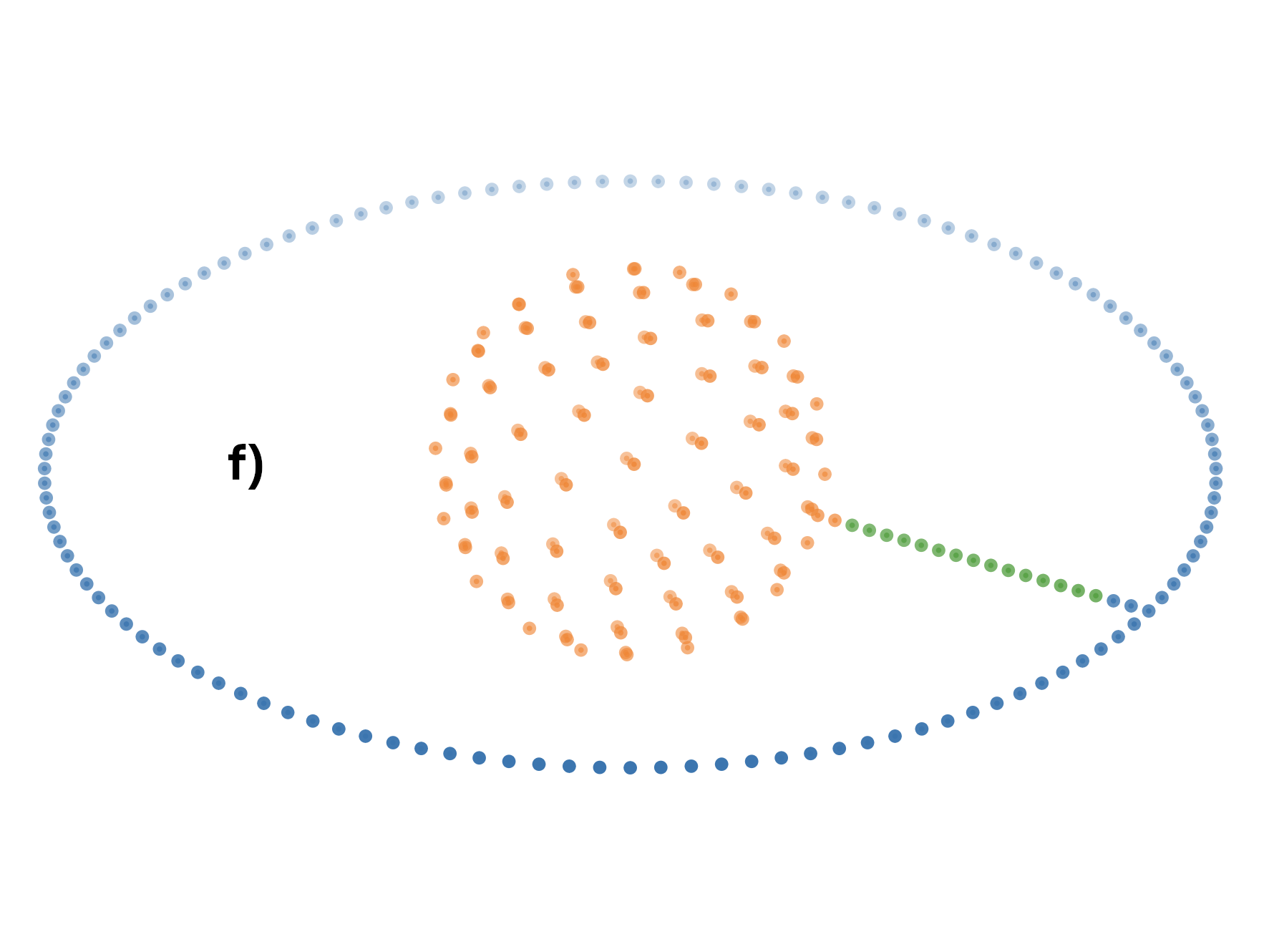}
			\includegraphics[height=5cm]{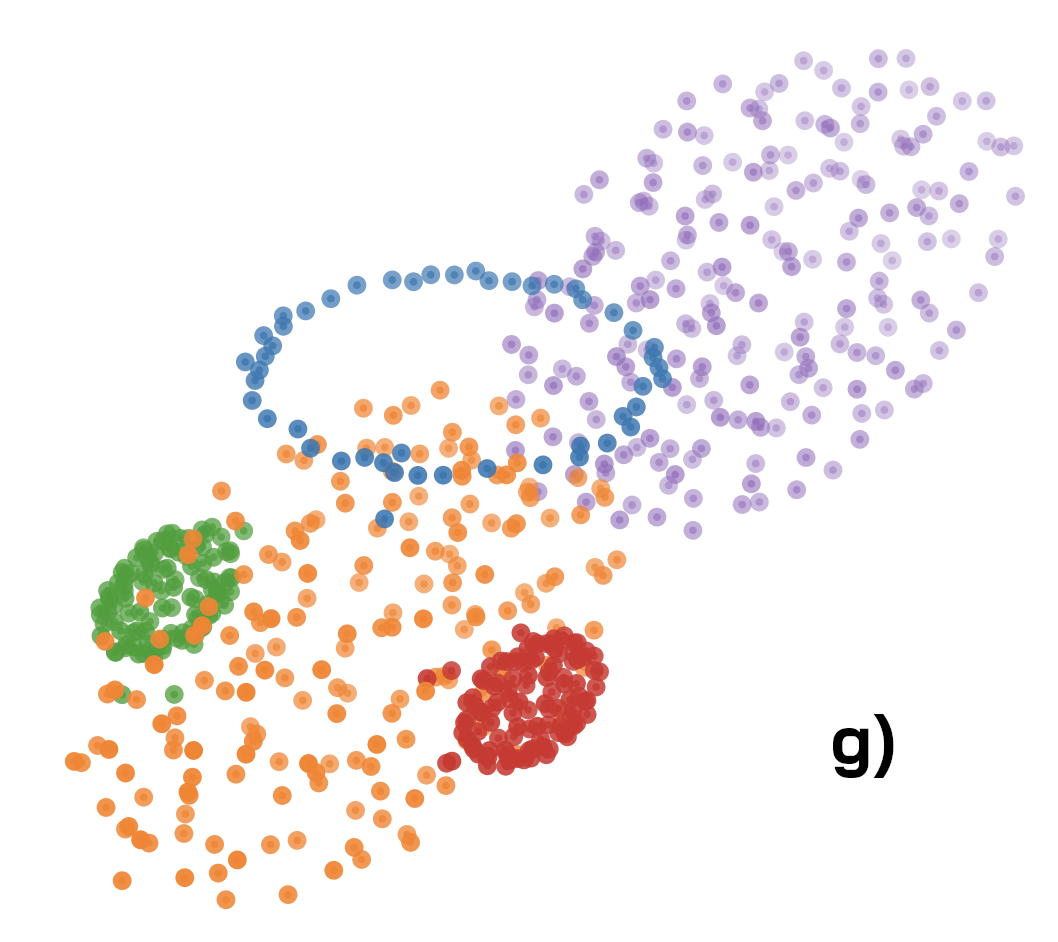}
		}
		\caption{\figuretitle{Data sets of the Topological Clustering Benchmark Suite (\TCBS{}) with labels generated by \TOPF{}.}
			\emph{Top:} $2D$ data sets. \emph{From left to right:}
			\emph{a):} \texttt{4Spheres} (0.81 \ARI{}),
			\emph{b):} \texttt{Ellipses} (0.95 \ARI{}),
			\emph{c):} \texttt{Spheres+Grid} (0.70 \ARI{}),
			\emph{d):} \texttt{Halved Circle} (0.71 \ARI{}).
			\emph{Bottom:} $3D$ data sets. \emph{From left to right:}
			\emph{e):} \texttt{2Spheres2Circles} (0.94 \ARI{}),
			\emph{f):} \texttt{SphereinCircle} (0.97 \ARI{}),
			\emph{g):} \texttt{spaceship} (0.92 \ARI{}).}
		\label{fig:TCBSTOPF}
	\end{center}
\end{figure}

\section{Implementation}
\label{app:implementation}
We have created an easy-to-use python package \TOPF{} which can be found
\iftoggle{arxiv}{
at github (\url{https://github.com/vincent-grande/topf}) and PyPi (\url{https://pypi.org/project/topf/})
with many examples from the paper and the topological clustering benchmark suite.
}{
in the supplementary material.
}
The package currently works under macOS and Linux.
The package contains both the code to generate the topological point features, as well as the code to reproduce the various visualisation steps in this paper.

All experiments were run on a Apple \textsc{m}\oldstylenums{1} Pro chipset with 10 cores and 32 \textsc{\lowercase{GB}} memory.
\TOPF{} and the experiments are implemented in Python and Julia.
For persistent homology computations, we used \textsc{\lowercase{GUDHI}} \citep{gudhi:urm} (\copyright{} The \textsc{\lowercase{GUDHI}} developers, \textsc{\lowercase{MIT}} license) and Ripserer  \citep{cufar2020ripserer} (\copyright{} mtsch, \textsc{\lowercase{MIT}} license), which is a modified Julia implementation of \citep{Bauer2021}.
For the least square problems, we used the \textsc{\lowercase{LSMR}} implementation of SciPy \citep{fong2011lsmr}.
We used the Node2Vec python implementation \url{https://github.com/eliorc/node2vec} (\copyright{} Elior Cohen, \textsc{\lowercase{MIT}} License) based on the Node2Vec Paper \citep{grover2016node2vec}.
We used the \texttt{pgeof} Python package for computation of geometric features \url{https://github.com/drprojects/point_geometric_features} (\copyright{} Damien Robert, Loic Landrieu, Romain Janvier, \textsc{\lowercase{MIT}} license).
We use parts of the implementation of \TPCC{} \url{https://git.rwth-aachen.de/netsci/publication-2023-topological-point-cloud-clustering} (\copyright{} Computational Network Science Group, \textsc{\lowercase{RWTH}} Aachen University, \textsc{\lowercase{MIT}} license).
We use the implementation of WSDesc \cite{Li2022WSDesc}, \url{https://github.com/craigleili/WSDesc} (Lei Li, Hongbo Fu, Maks Ovsjanikov. \textsc{\lowercase{CC BY-NC}} 4.0 License).
We use the implementation of An Tao, \url{https://github.com/antao97/dgcnn.pytorch} of Dynamic Graph \textsc{cnn}s \cite{wang2019dynamic}, \textsc{MIT} license.
The idea for the implementation of the fix of the lift of the $\ZthreeZ$-representative to $\R$-coefficients was inspired by DREiMac, \cite{Perea2023}.
\subsection{Hyperparameters}
\label{app:hyperparameters}

\begin{figure}[ht!]
	\begin{center}
		\includegraphics[width=0.37\linewidth]{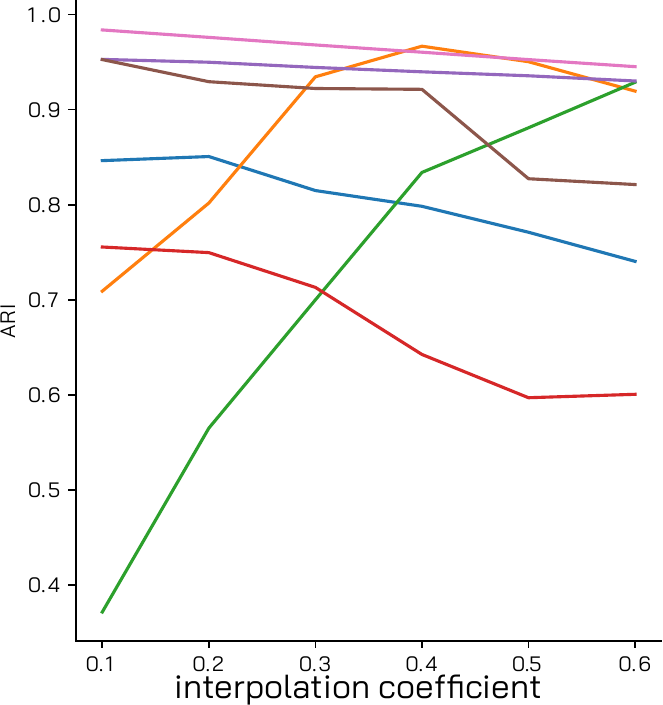}
		\includegraphics[width=0.6\linewidth]{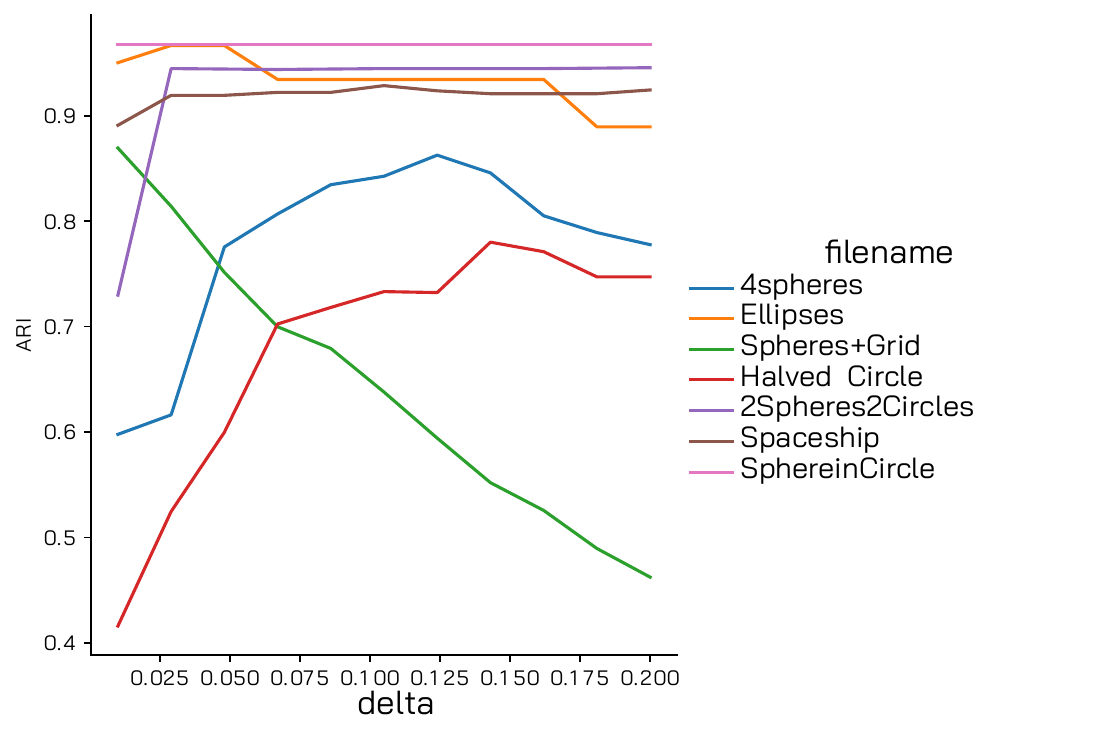}
		\includegraphics[width=0.37\linewidth]{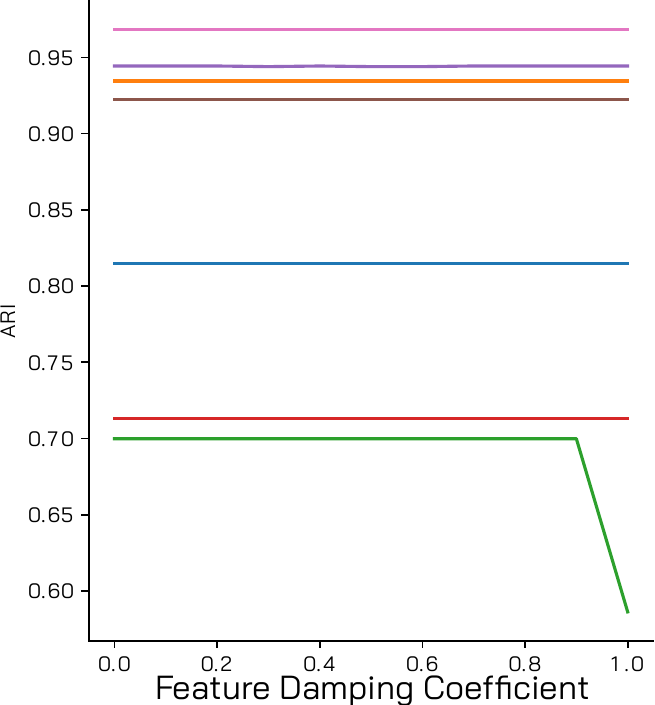}
		\includegraphics[width=0.6\linewidth]{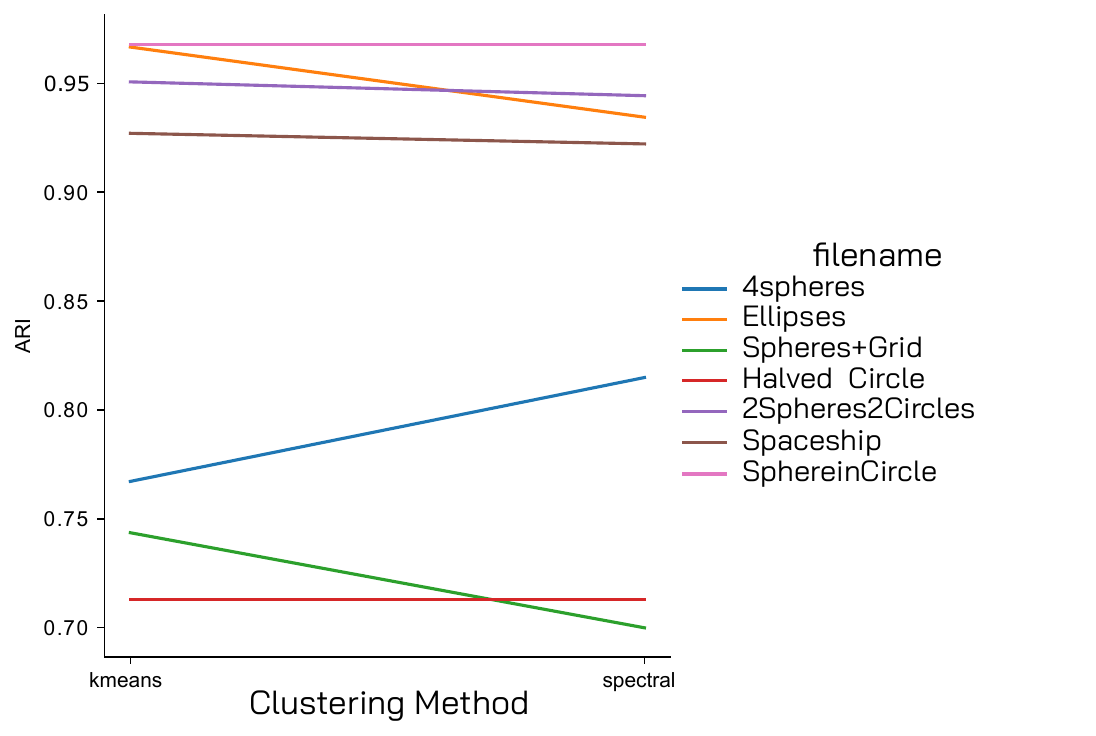}
		\caption{\figuretitle{Hyperparameter robustness of \TOPF{} on the \TCBS{}}.
			\TOPF{} performs reasonably well across a large range around the default parameters (Interpolation coefficient: $0.3$, $\delta$: $0.07$, damping coefficient (non-negative): $0$, Clustering method: \texttt{spectral}).
			The clustering method is used on the \TOPF{} vectors to determine the cluster-ids.
		}
		\label{fig:hyperparameterrobustness}
	\end{center}
	\vspace{-0.2in}
\end{figure}

All the relevant hyperparameters are already mentioned in their respective sections.
However, for convenience we gather and briefly discuss them in this section.
We note that \TOPF{} is robust and applicable in most scenarios when using the default parameters without tuning hyperparameters.
The hyperparameters should more be thought of as an additional way where detailed domain-knowledge can enter the \TOPF{} pipeline.
We have performed additional experiments to validate the robustness of \TOPF{} to moderate changes in parameters, see \Cref{fig:hyperparameterrobustness}.

\paragraph{Maximum Homology Dimension $d$}
The maximum homology dimension determines the dimensions of persistent homology the algorithm computes.

For the choice of the maximum homology degree $d$ to be considered there are mainly three heuristics which we will list in decreasing importance (Cf. \cite{Grande:2023}): 

\begin{enumerate}[I.]
	\item In applications, we usually know which kind of topological features we are interested in, which will then determine $d$.
	This means that $1$-dimensional homology and $d=1$ suffices when we are looking at loops of protein chains.
	On the other hand, if we are working with voids and cavities in 3d histological data, we need $d=2$ and thus compute $2$-dimensional homology.
	\item Algebraic topology tells us that there are no closed $n$-dimensional submanifolds of $\R^n$.
	Hence their top-homology will always vanish and all interesting homological activity will appear for $d<n$.
	\item In the vast majority of cases, the choice will be between $d=1$ or $d=2$ because empirically there are virtually no higher-dimensional topological features in practice.
\end{enumerate}

In our quantitative experiments, we have always chosen $d=n-1$.

\paragraph{Thresholding parameter $\delta$}

In step 4 of the algorithm, we normalise and threshold the harmonic representatives.
After normalising, the entries of the vectors lie in the interval of $[0,1]$.
The thresholding parameter $\delta$ now essentially determines an interval of $[0,\delta]$ which we will linearly map to $[0,1]$, while mapping all entries above $\delta$ to $1$ as well.
This is necessary as most of the entries in the vector $e_k^i$ are very close to $0$ with a very small number of entries being close to $1$.
Without this thresholding, \TOPF{} would now be almost entirely determined by these few large values.
Thus this step limits the maximum possible influence of a single entry.
However, because most of the entries of $e_k^i$ are concentrated around $0$, small changes in $\delta$ will not have a large effect and we chose $\delta = 0.07$ in all our experiments.
\paragraph{Interpolation coefficient $\lambda$}

The interpolation coefficient $\lambda \in (0,1)$ determines whether we build our simplicial complexes close to the birth or the death of the relevant homological features at time $t=b^{1-\lambda}d$.
This then in turns controls how localised or smooth the harmonic representative will be.
In general, the noisier the ground data is the higher we should choose $\lambda$.
However, \TOPF{} is not sensitive to small changes in $\lambda$.
We have picked $\lambda=0.3$ for all the quantitative experiments, which empirically represents a good choice for a broad range of applications.

\paragraph{Feature selection factor $\beta$}

Increasing $\beta$ leads to \TOPF{} preferring to pick a larger number of relevant topological features.
Without specific domain-knowledge, $\beta = 0$ represents a good choice.

\paragraph{Feature selection quotients \texttt{max\_total\_quot}, \texttt{min\_rel\_quot}, and \texttt{min\_0\_ratio}}

These are technical hyperparameters controlling the feature selection module of \TOPF{}.
For a technical account of them, see \Cref{app:featureselection}.
In most of the cases without domain knowledge, they do not have an effect on the performance of \TOPF{} and should be kept at their default values.

\paragraph{Simplicial Complex Weights}

Although the simplicial weights are not technically a hyperparameter, there are many potential ways to weigh the considers \tSC{}s that can highlight or suppress different topological and geometric properties.
In all our experiments, we use $w_\Delta$ weights discussed in \Cref{app:WeightedSCs}.
\paragraph{Ablation study}
In \Cref{tab:ablation}, we performed an ablation study and benchmarked \TOPF{} against \TOPF{} while skipping the harmonic projection step with the Hodge Laplacian (Step \textbf{3.}).
The result show that in general, skipping the step with the Hodge Laplacian decreases the performance of \TOPF{} by a wide margin. The exception to this are \texttt{SphereInCircle} or \texttt{2Spheres2Circles}, where the two methods have similar performance. In these two last examples, points are directly sampled from a manifold, and thus the loops and spheres are very "thin". Thus, the homology generator is already a good approximation of all the simplices responsible for a topological feature. In general however, this is not the case, necessating the use of Differential Geometry and the Hodge Laplacian. 
\subsection{Runtime}
In \Cref{fig:runtime}, we analyse the runtime of the current \TOPF{} implementation on two point clouds from the topological clustering benchmark suite.
We increase the point density while keeping the structure of the point cloud intact.
The runtime of our implementation with \texttt{sparsification = 'off'} in the regime of \Cref{fig:runtime} appears to scale linearly with the total number of points with a significant constant term.
The significant constant term is probably due to our implementation starting a new julia kernel and communicating between python and julia.
A python-only or julia-only implementation would thus speed up computations.
\paragraph{Theoretical runtime complexity}
\begin{enumerate}
\item \textbf{a). Constructing the complex:} The Vietoris--Rips complex will have $O(n^{k+1})$ simplices, where $n$ is the number of points and $k$ the maximum homology dimension. Apart from this, there is no significant computational effort needed. Note that we can decrease this if we set a maximum VR-radius, as done in many works from the literature. Computing the alpha-complex in ambient dimension $d$ is equivalent to computing the convex hull in ambient dimension $d+1$, which has a complexity of $\sim O(n^{d/2})$. For a fixed dimension $d$, the number of simplices is linear in $n$. \textbf{b). Computing persistent homology and generators:} As we need the homology generators, we use the recently introduced involuted persistent homology \cite{vcufar2023fast}. While the authors discuss the runtime performance, they do not give a concrete complexity bound in \cite{vcufar2023fast}. However, they deduce that it has a similar run-time to usual persistent cohomology computations, whose performance is for example discussed in \cite{de2011dualities}.
	
\item \textbf{Picking the significant generators} has negligible impact.
	
\item \textbf{Computing the harmonic projections.} Given a homology representative $r$ in dimension $k$, computing the harmonic representative amounts to computing a sparse matrix vector product of $B_{k+1}x$ and a sparse least squares problem $\text{lsmr}(B_{k+1},r)$, i.e. solving \[\min_{x\in \mathbb{R}^{S_{k+1}}} \lVert r-B_{k+1}x\rVert_2\]
	where $S_{k+1}$ is the number of $(k+1)$-simplices in the simplicial complex and $B_{k+1}$ has $(k+2)S_{k+1}$ non-zero entries. This is a sparse least-squares problem, which we solve using the iterative sparse solver \texttt{lsmr} \citep{fong2011lsmr}. Because this is an iterative solver, the authors do not give runtime complexity, but discuss the computational requirements in \cite{fong2011lsmr}.
		
\item \textbf{Pooling and averaging} over the simplicial neighbours has negligible runtime constraints.
\end{enumerate}
\begin{figure}[ht!]
	\begin{center}
		\includegraphics[width=0.45\textwidth]{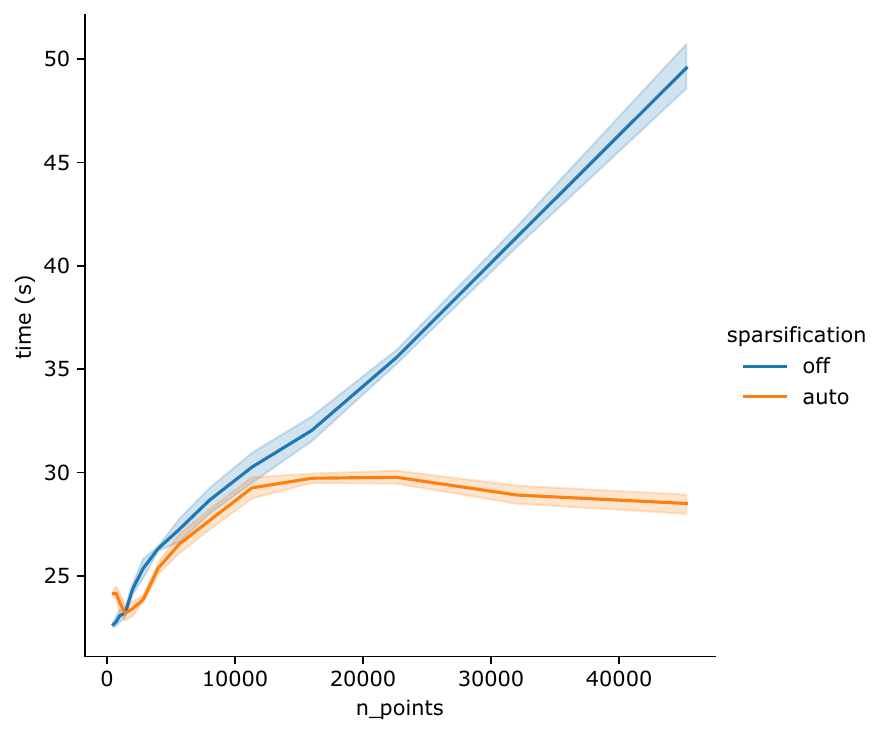}
		\includegraphics[width=0.45\textwidth]{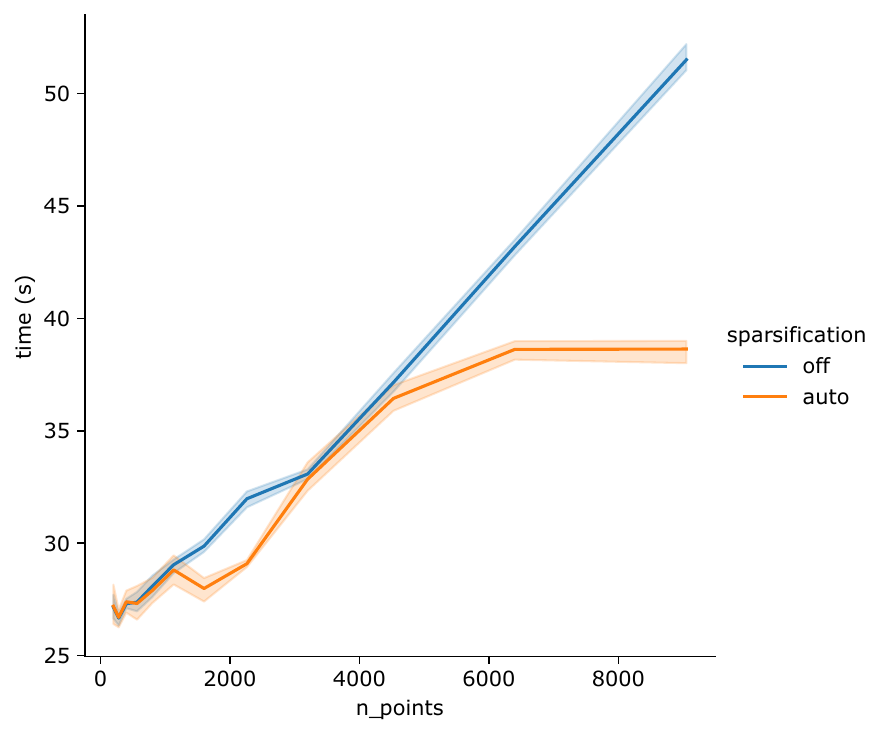}
		\caption{\figuretitle{Runtime on two examples from Topological Clustering Benchmark Suite while increasing point density.}
			\emph{Left:} \texttt{Halved Circle}, \emph{Right:} \texttt{2 Spheres 2 Circles}. Experiments kept the topological structure intact while increasing the number of points sampled. Both the runtime of the vanilla \TOPF{} algorithm, as well as of \TOPF{} using automatic subsampling heuristic as implemented in the python package, are listed.
		}
		\label{fig:runtime}
	\end{center}
\end{figure}
\section{More Details on the Experiments}
\label{app:experiments}
	\paragraph{High-dimensional Point Clouds}
In \Cref{fig:cyclo8}, we use \TOPF{} on a high-dimensional input point cloud representing the conformation space of cyclooctane \cite{Martin2011}.
Because the ambient dimension is too large for $\alpha$-filtrations, a Vietoris--Rips filtration with more simplices is used.
The \VR{} filtration depends on the ambient dimension only for computing the distance matrix, which is a negligible part of the runtime.
Hence, a similar performance can be expected for higher dimensions than $24$ as well.
To counteract the increase in computational complexity, \TOPF{} automatically downsamples the point cloud using a mixture of landmark and random sampling.
We see in the \textsc{isomap} projections of \Cref{fig:cyclo8} that \TOPF{} extracts reasonable features representing the topology of the conformation space.
Setting $n_\text{clusters}$ to $4$ even reveals the manifold anomalies in a separate cluster (\emph{blue}), similar to the results of \cite{Stolz2020}.
The runtime for this high-dimensional experiment was $189.8$ seconds.

\paragraph{Variational Autoencoders}
We have trained a \VAE{} on image patches sampled with a topological structure, see \Cref{fig:figcow}.
We have sampled the image patches around two loops and a connecting line.
Using \TOPF{}, we could recover this structure in the latent space of the \VAE{} highlighting how topological point features can help in explainable \textsc{ai}.
We note, of course, that many latent spaces do not carry any higher-order topological information, making characterisation by \TOPF{} infeasible.
Rather we showed that when we expect a topological structure to exist in the \emph{data set} due to some specifics of the data, we can recover this topological structure even in the latent space of the \VAE{} using \TOPF{}.
We have repeated the experiments for a higher-dimensional latent space and received similar results, see \Cref{fig:figcowhigh}.
\begin{figure}[ht!]
	\begin{center}
		\includegraphics[width=\textwidth]{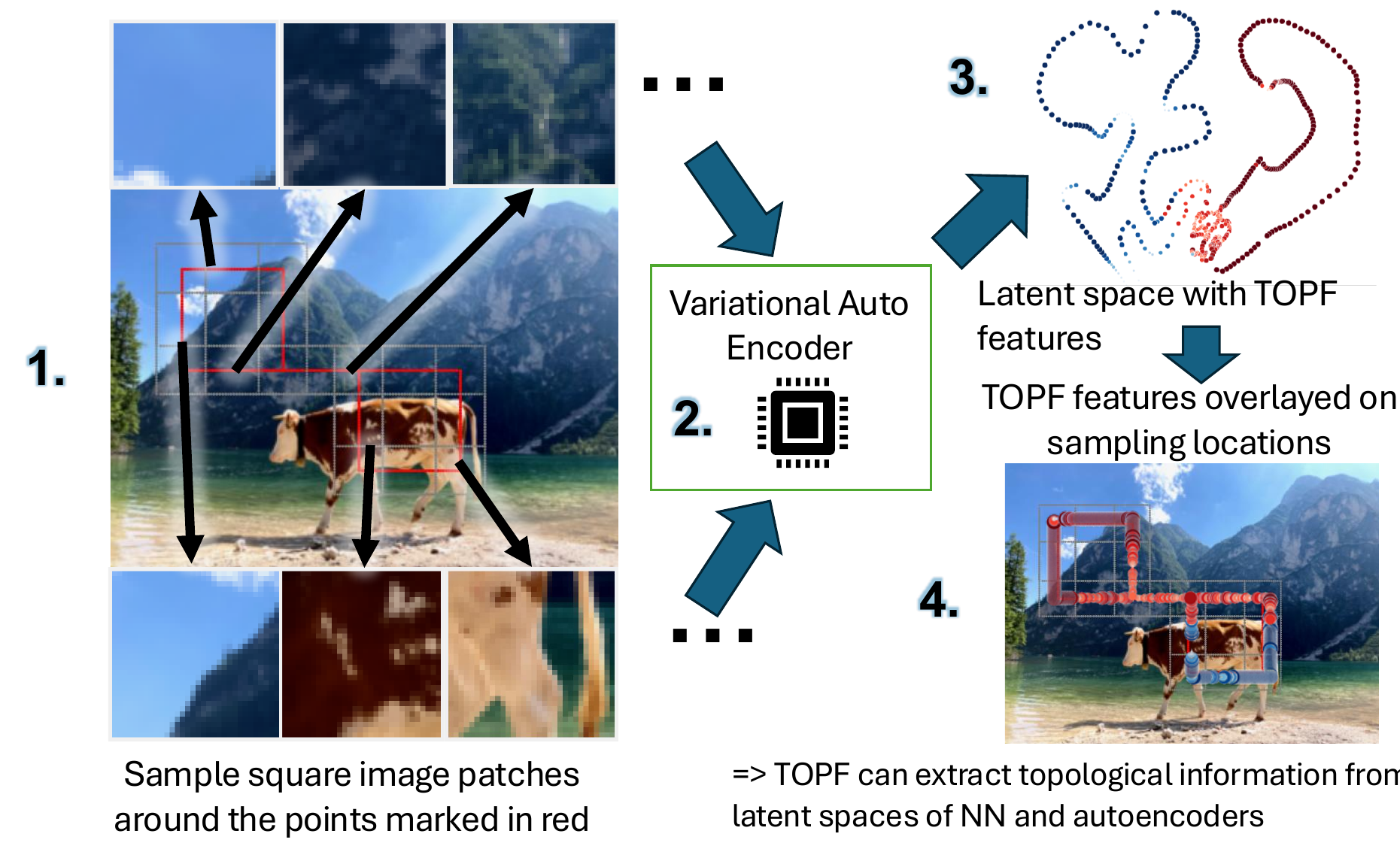}
		\caption{\figuretitle{Using \TOPF{} to explore the topological structure in latent/embedding spaces of Variational Auto Encoders (\textsc{vae})}
			\textbf{1.} Given a picture (of a cow), we sample $578$ square patches around the centre points marked in red in the image.
			The centre points are taken from the sides of two squares and a line connecting the two squares.
			The assumption is that this topological structure (with two holes) is present in the sample space as well.
			\textbf{2.} We train a \textsc{vae} on the set of $578$ square patches with latent space dimension of $3$ (down from $33\cdot 33 \cdot 3$.)
			\textbf{3.} We run \TOPF{} with \texttt{fixed\_num\_features} set to $[0,2]$ on the latent space to extract the two most significant point-wise topological features.
			\textbf{4.} We overlay the topological features from the latent space over the centre points of the corresponding image patches. We see that \TOPF{} has roughly \emph{recovered the topological structure inherent in the sample space} as described in step 1. We note that the image patches with centre on the middle horizontal line are almost identical, making it virtually impossible to distinguish them. Note that although \TOPF{} thinks they contribute to the left loop, their corresponding feature entries are a lot smaller than those of the image patches (shown by lighter colours and smaller dots in the plot.)
		}
		\label{fig:figcow}
	\end{center}
	\vspace{-0.2in}
\end{figure}
\begin{figure}[ht!]
	
	\begin{center}
		\includegraphics[width=0.2\linewidth]{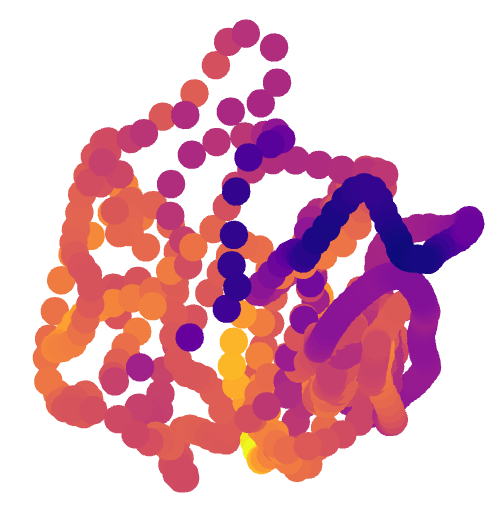}
		\includegraphics[width=0.2\linewidth]{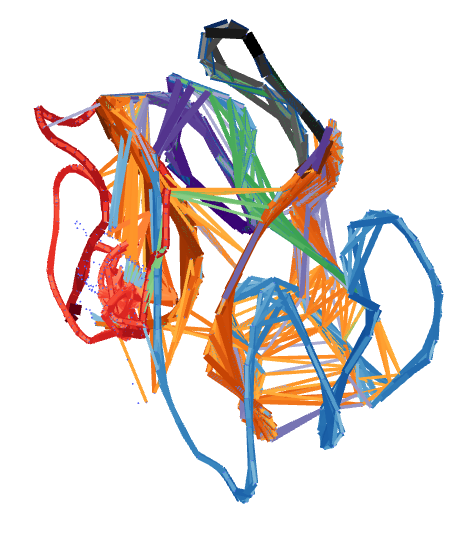}
		\includegraphics[width=0.28\linewidth]{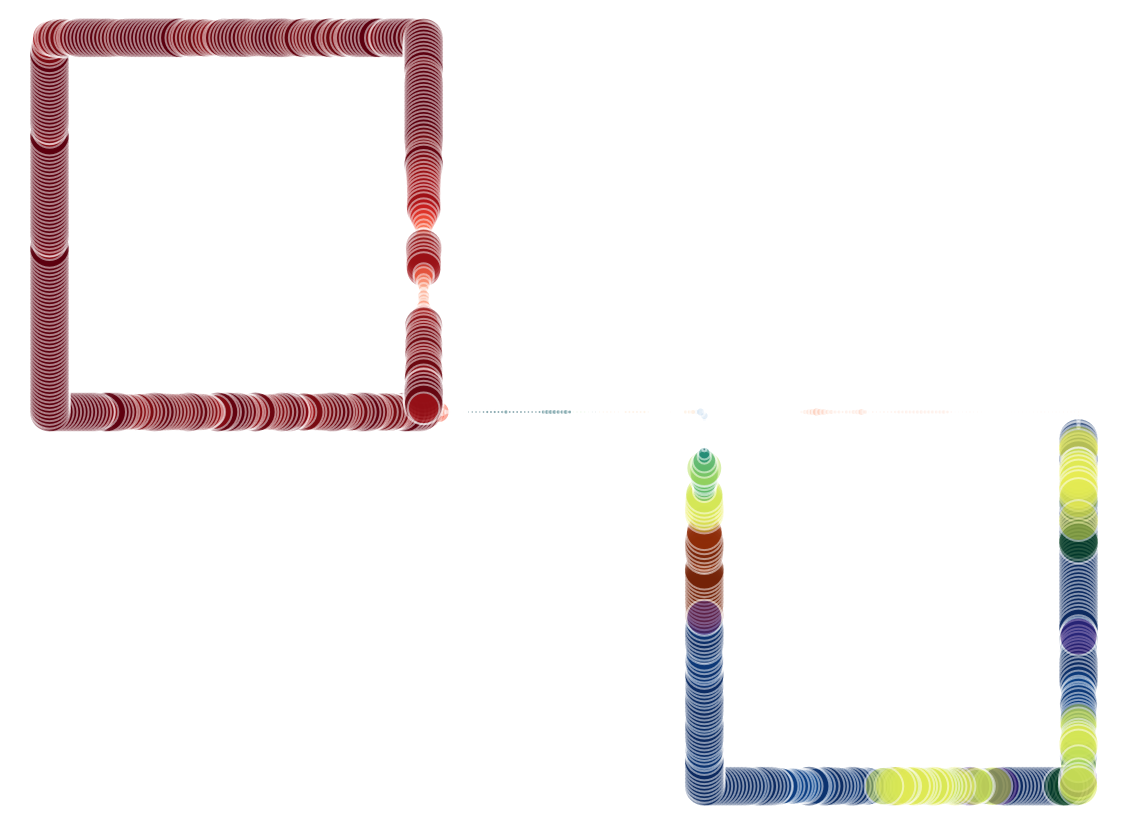}
		\includegraphics[width=0.28\linewidth]{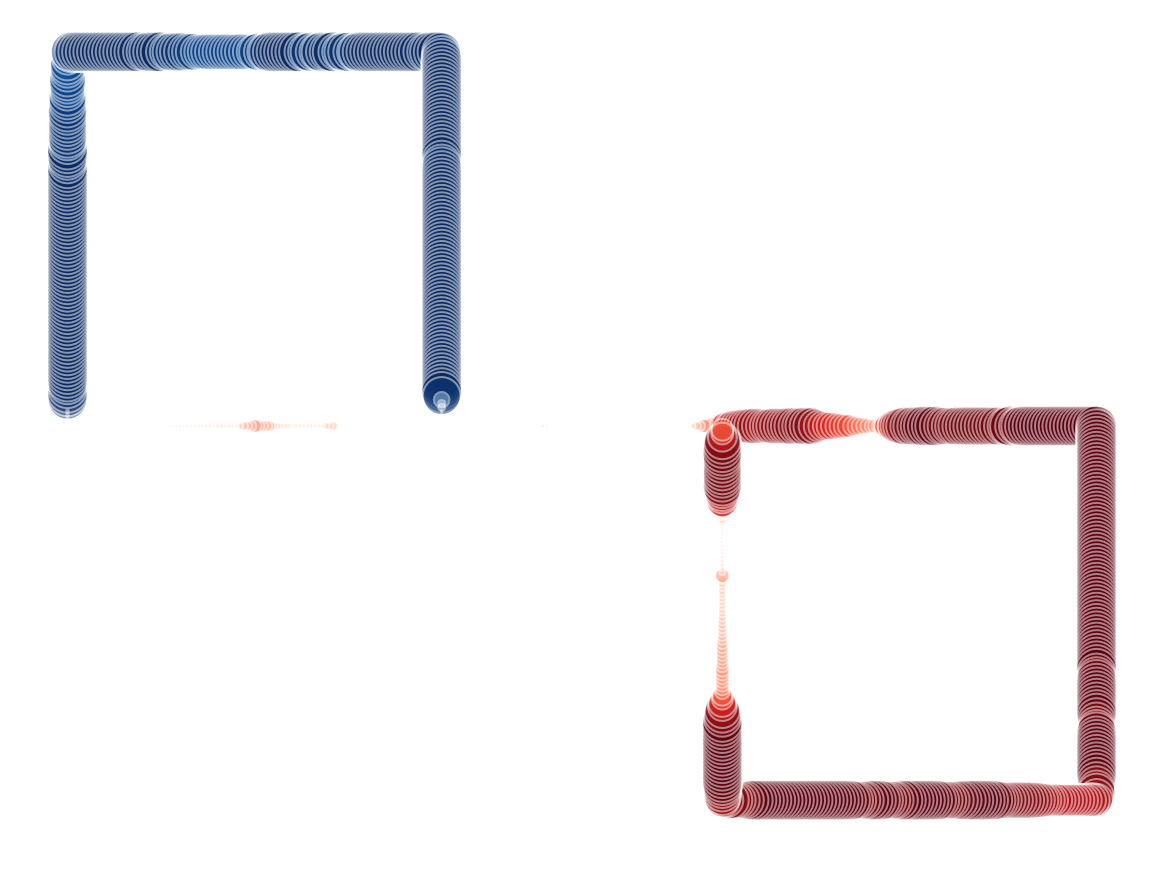}
		
		\caption{\figuretitle{\textsc{vae} experiment described in \Cref{fig:figcow} for a 16-dimensional latent space of the \textsc{\lowercase{VAE}} and the original 8748-dimensional image space.}
			\emph{Left:} The first three of the $16$ dimension of the latent space embeddings, colour represents the fourth dimension.
			\emph{Centre left:} The weighted harmonic representatives of the selected topological features.
			\emph{Centre right:} The features produced by \TOPF{} overlayed on the centre pixels of the associated image patches, as done in \Cref{fig:figcow} Step 4., just without the picture of the cow.
			\emph{Right:} \TOPF{} features obtained from the original $8748$-dimensional image space.
			This shows that \TOPF{} can help analyse topological structure in high-dimensional data. 
		}
		\label{fig:figcowhigh}
	\end{center}
\end{figure}
\paragraph{Additional heatmaps on proteins}
In \Cref{fig:OldProtein}, we provide additional experiments of \TOPF{} on protein data.
This time, we report every single feature vector produced by \TOPF{} in a separate plot.
In the experiment with Cys123, we wanted to show that \TOPF{} can detect so-called protein pockets. Because pockets do not form holes detectable by \TDA{} straightforward, the literature suggests adding (and later removing) these points on the convex hull, which turns the pockets into holes, see for example \cite{oda2024novel}.
\begin{figure}[tb!]
	\vskip -0.1in
	\begin{center}
		\resizebox{\textwidth}{!}{
			\includegraphics[height=10cm]{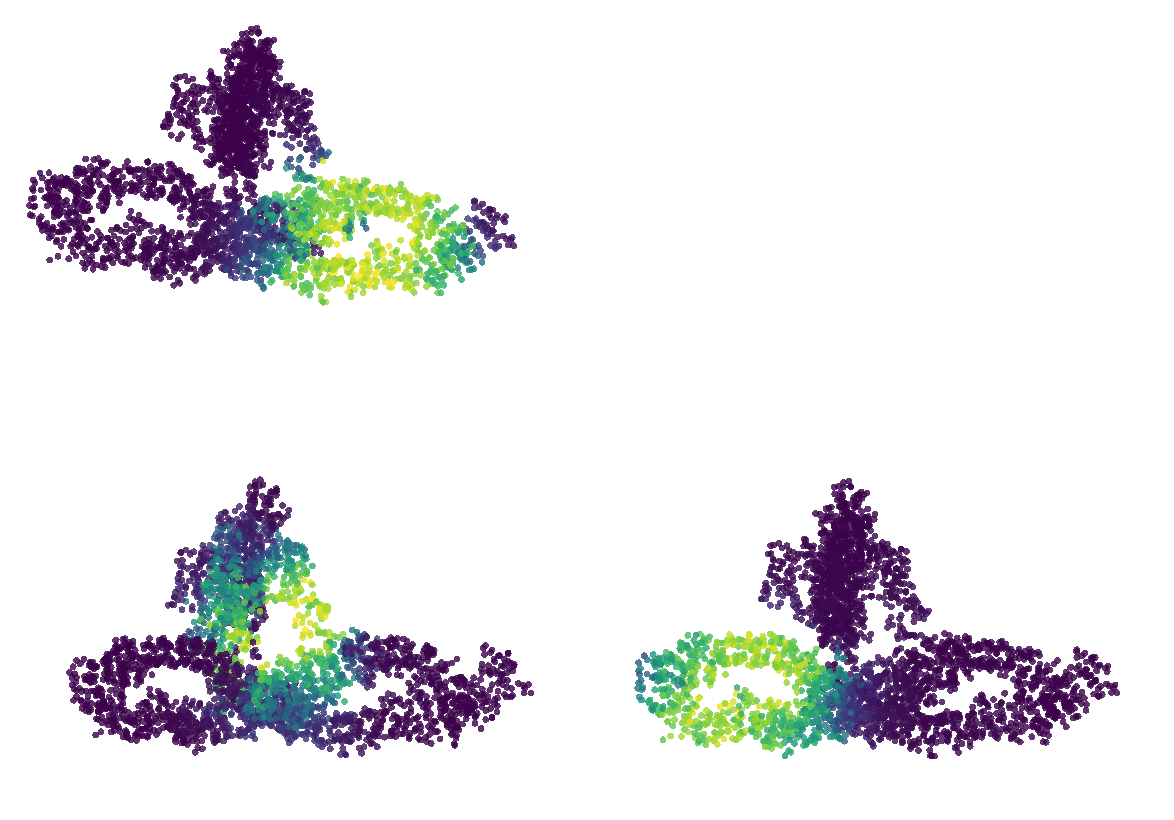}
			\includegraphics[height=10cm]{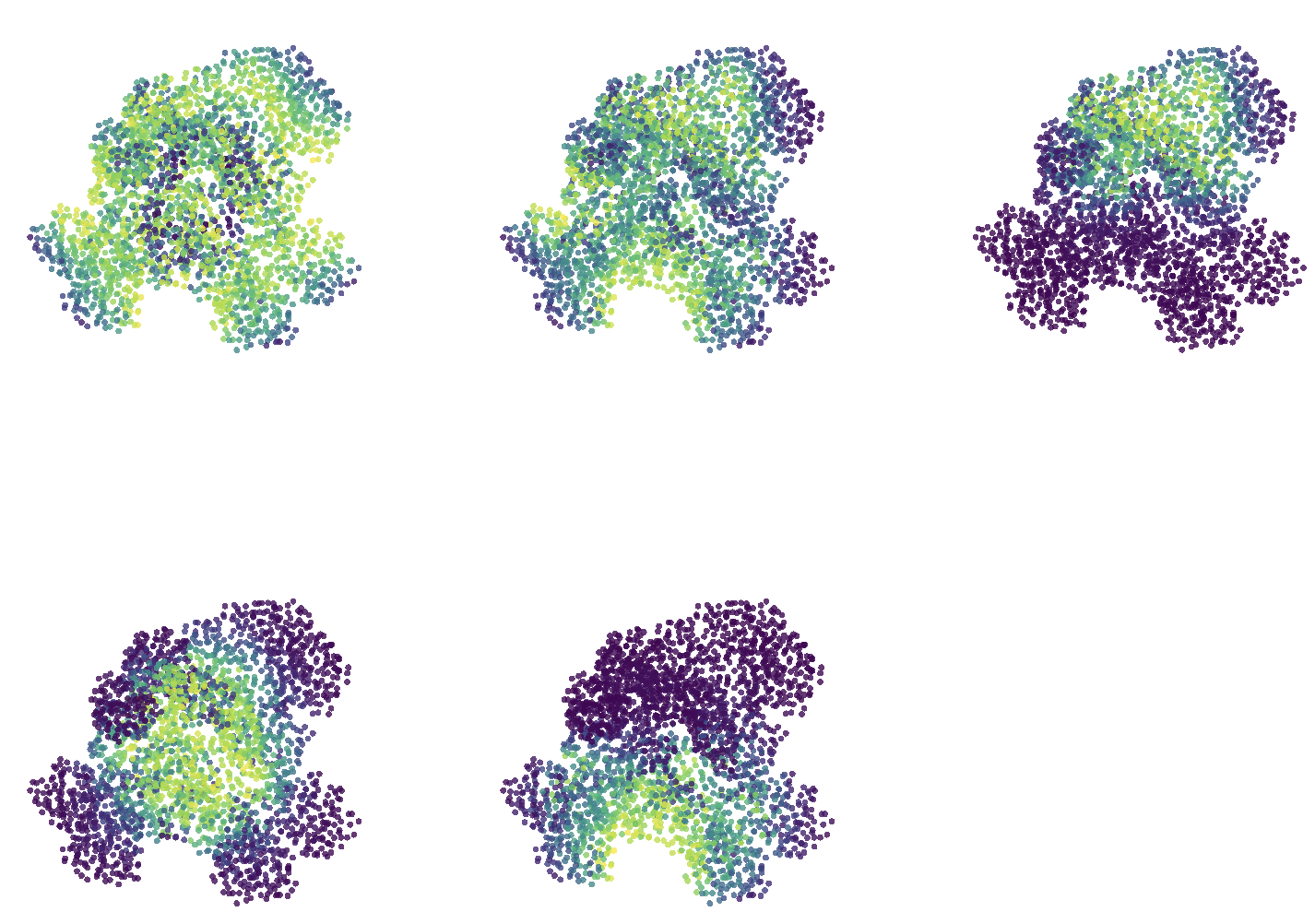}
		}
		\includegraphics[width=\linewidth]{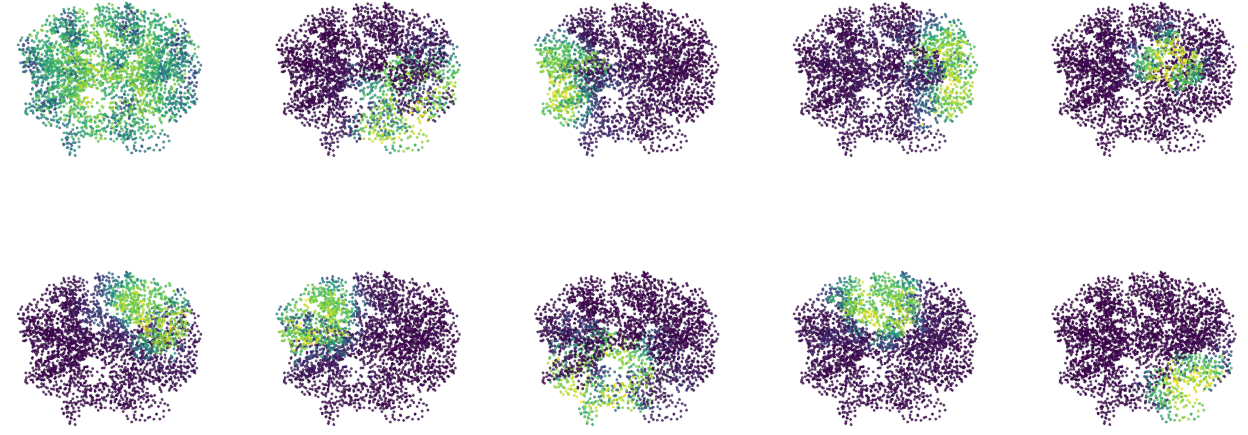}
		\caption{\figuretitle{
				\TOPF{} heatmaps for three proteins.}
			\emph{Top left:} \textsc{nalcn} channelosome \citep{Kschonsak:2022}
			\emph{Top right:} Mutated Cys123 of E.\ coli \citep{hidber2007participation}, with convex hull added during computation, only $2$-dimensional homology features
			\emph{Bottom:} GroEL of E.\ coli \citep{chaudhry2004exploring} (Selected features).
		}
		\label{fig:OldProtein}
	\end{center}
	\vskip -0.2in
\end{figure}

\paragraph{Performance with decreasing sampling density}
In \Cref{fig:Downsampling}, we analyse the robustness of \TOPF{} with respect to a decrease in sampling density.
We have selected \texttt{2Spheres2Circles} as the dataset with the highest original sampling density for this experiment.
It shows that \TOPF{} can still produce meaningful results even for significantly downsampled point clouds.
We note that the considered point clouds are in particular very sparse when compared to the point clouds usually considered by neural 3D point cloud architectures.
 We interpret the results as basically repeating the good performance of \TOPF{}, except for the datasets \texttt{EllipsesinEllipses} and \texttt{spaceship}, where the performance drops rapidly (at least in a logarithmic plot). Our interpretation of this is that both datasets consist of submanifolds with different sampling density, which intersect/are contained in one another. Thus, the point density is a key to distinguishing the classes, and a random change in these densities due to downsampling can make the encoded structure hard to detect or even causes them to vanish in a sense of topology.

\paragraph{Performance under heterogeneous sampling}
In \Cref{fig:Skewedsampling}, we analyse the robustness of \TOPF{} with respect to heterogeneous sampling.
The results show that TOPF performs well even in the presence of inhomogeneous sampling, and the degrading of performance can largely be attributed to a decrease in minimum sampling rate, rather than the inhomogeneous nature.

From a theoretical point of view, this should not be surprising: The birth of the topological feature will now depend on the minimum sampling density, while the death time still depends on the size of the feature. The heuristic employed by TOPF will then choose a maximal triangulation radius which lies between these two radii. In the dense part, the triangulation will be denser then in the sparse parts. However, due to the the character of the alpha filtration, the weighting of the simplices discussed in \Cref{app:WeightedSCs} and the thresholding this does not cause an issue in the features generated by TOPF.
\begin{figure}[tb!]
	\vskip -0.1in
	\begin{center}
		\includegraphics[width=\linewidth]{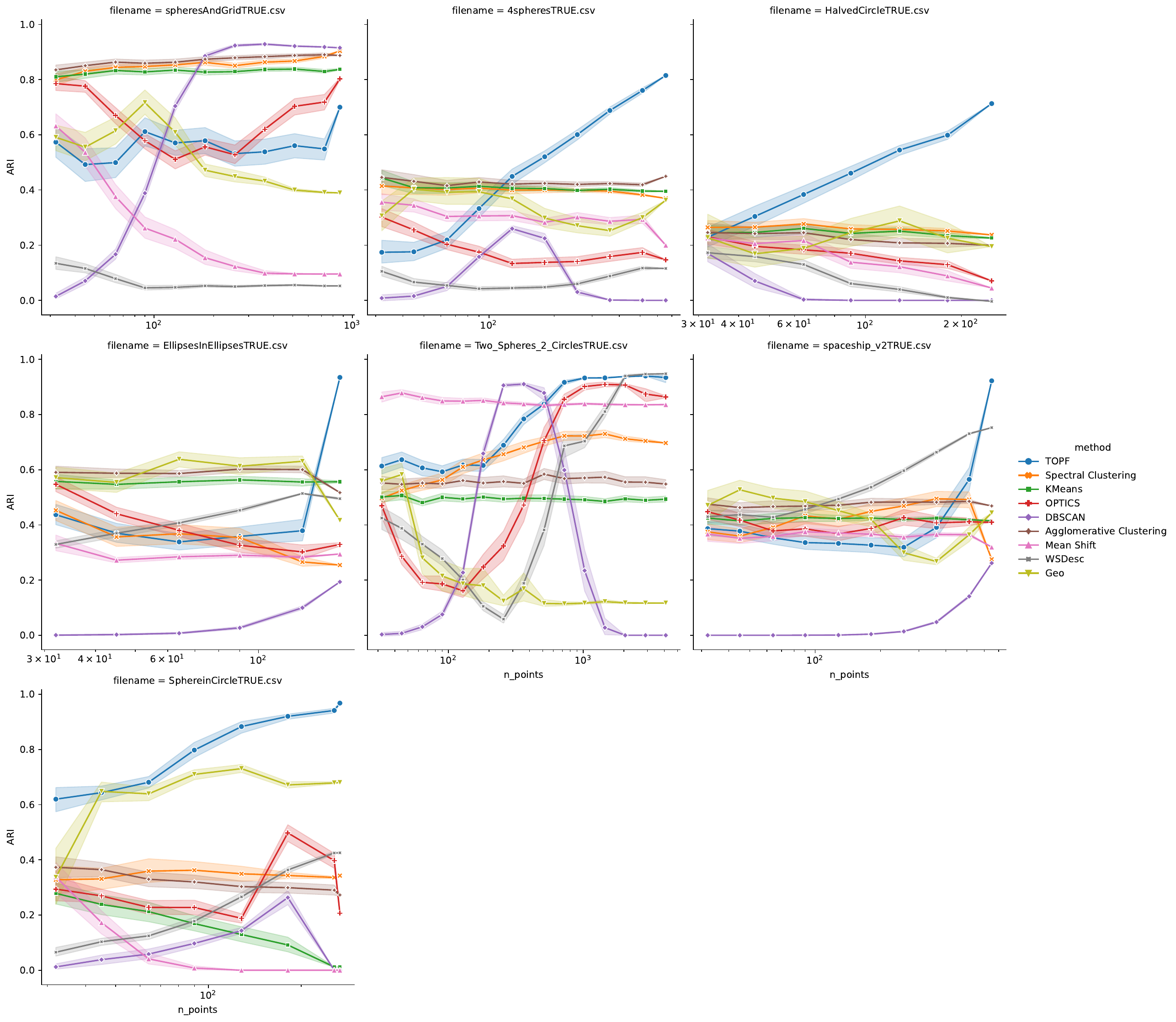}
		\caption{\figuretitle{
				Performance of \TOPF{} while decreasing sampling density.}
				The $x$-axis is scaled logarithmically and \TOPF{} achieves an adjusted rand index (\ARI{}, random algorithms achieve an \ARI{} of over 0.9 for $\sim 700$ samples, down from $4600$ original points on the \texttt{2Spheres2CirclesDataset}. The smallest considered sample size was $16$ points.
				The points were sampled at random.
				We ran each experiment $100$ times and report the standard deviation.
		}
		\label{fig:Downsampling}
	\end{center}
	\vskip -0.2in
\end{figure}
\begin{figure}[tb!]
	\vskip -0.1in
	\begin{center}
		\includegraphics[width=\linewidth]{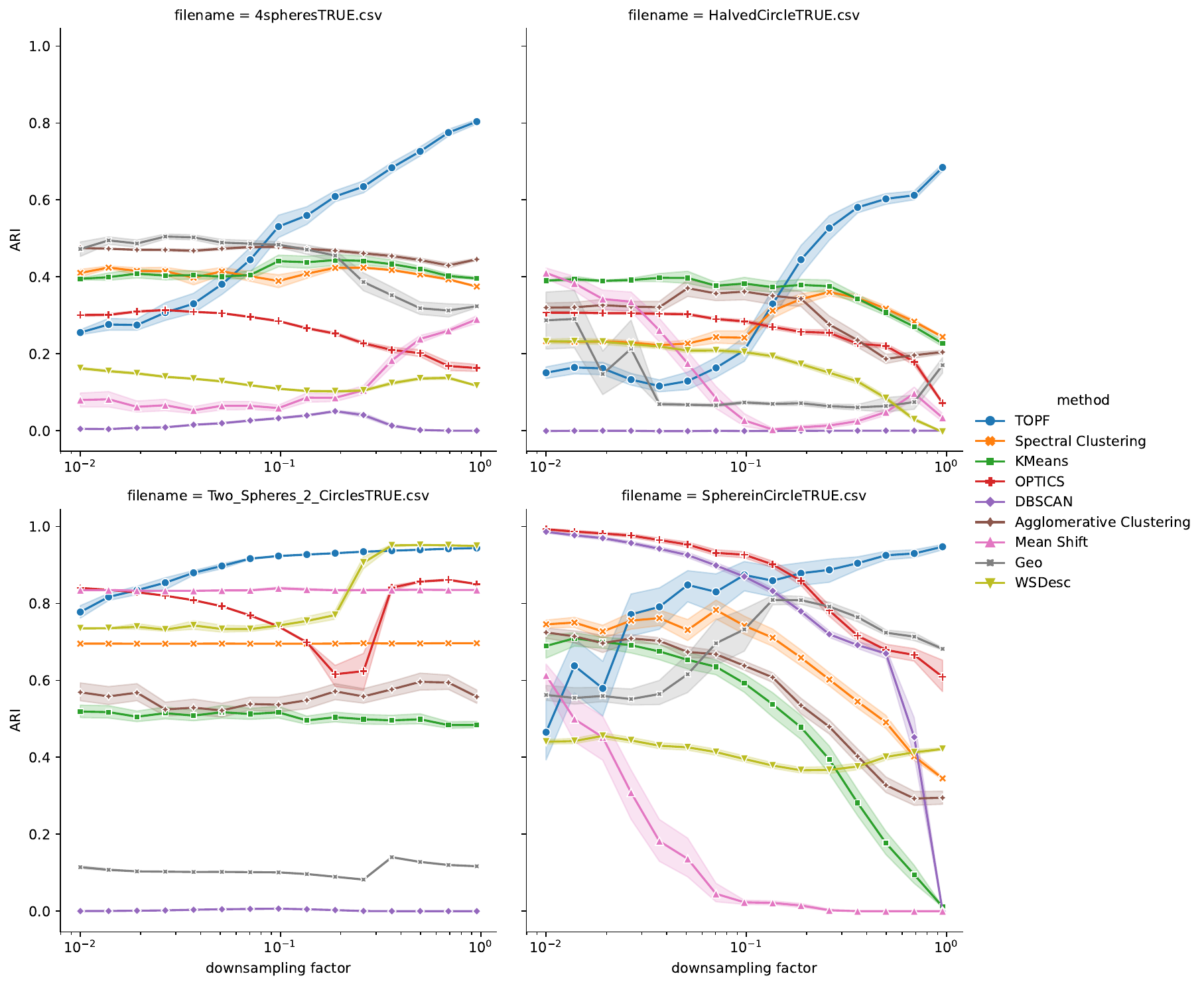}
		\includegraphics[width=0.3\linewidth]{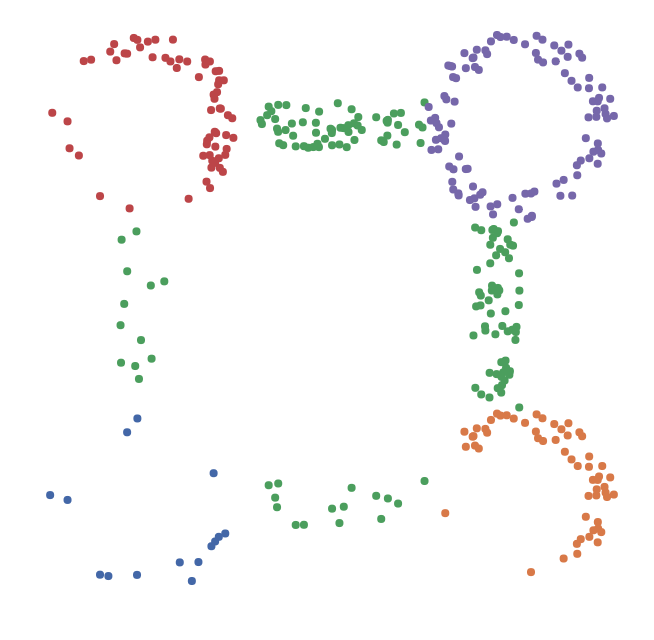}
		\includegraphics[width=0.69\linewidth]{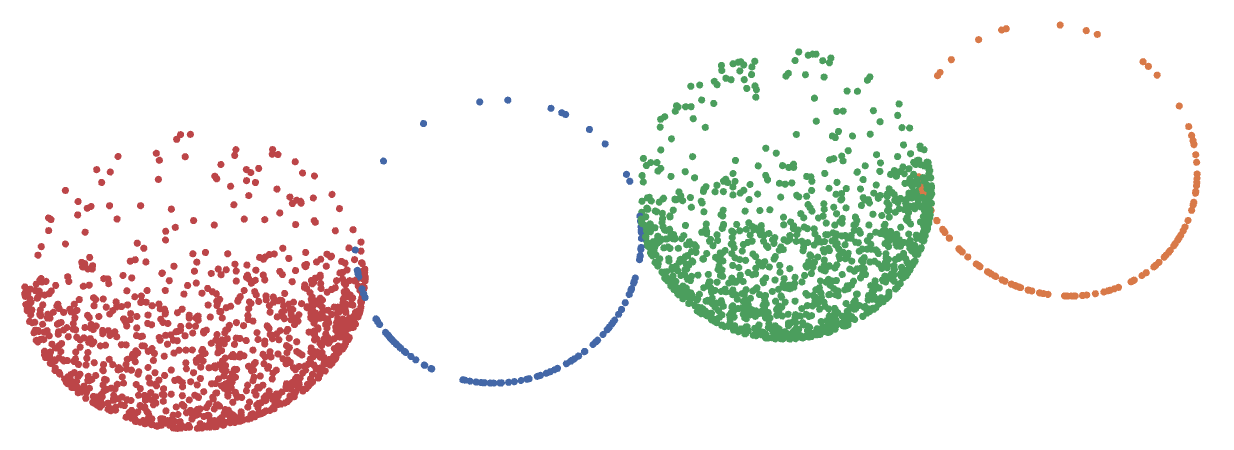}
		\caption{\figuretitle{
				Performance of \TOPF{} under heterogeneous sampling.}
\emph{Top: }We divide four of the point clouds with a symmetry into two halves. We downsample only one of the halves, creating sampling irregularities within the individual topological features. We repeat the experiment $100$ times and report the achieved \ARI{}.
			Note that the lowest value on the $x$-axis means downsampling by a factor of $100$, meaning that the smaller point clouds will then almost only consist of one half.
			We compare the performance of \TOPF{} with the other baselines, noting that \TOPF{} outperforms them for reasonable heterogeneities.
			\emph{Bottom Left:} \texttt{4spheres} with downsampling factor $0.2$ with true labels.
			\emph{Bottom Right:} \texttt{2spheres2circles} with downsampling factor $0.1$ with true labels.
		}
		\label{fig:Skewedsampling}
	\end{center}
	\vskip -0.2in
\end{figure}

\paragraph{Details on experiments with WSDesc}
WSDesc (Weakly Supervised 3D Local Descriptor Learning for Point Cloud Registration), \cite{Li2022WSDesc}, uses voxel-based representations of point clouds to extract robust 3D local descriptors for point cloud registration.
As showcased in the original paper, WSDesc showcases a good generalisation for point clouds outside of the training set.
We used WSDesc pretrained on the 3DMatch data.
WSDesc was used with $512$ keypoints, and tested with a varying of keypoints up to $\sim 5000$.
We thus tested WSDesc with the maximum number of keypoints possible per input point cloud.
\paragraph{Details on experiments with DGCNN}
We use the implementation of An Tao, \url{https://github.com/antao97/dgcnn.pytorch} of Dynamic Graph \textsc{cnn}s \cite{wang2019dynamic}.
We use the data pretrained on the ShapeNetPart segmentation dataset, where we use the features of the second last layer.
This showed the best performance.
We determine the optimal hyperparameters ($k$, object class, clustering method, layer to use) for every dataset of \TCBS{} individually and only present the results on the best parameter set.
The spectral clustering always outperformed $k$-means.
The authors of the original paper state that \textsc{DGCNN} performs very well on datasets with above $500$ points, and still is robust to downsampling beyond this point.
Thus we always input the maximum available number of points to \textsc{DGCNN}.
As described in \cite{wasserman2018topological}, we scale the input point cloud to fit into a unit sphere.
We pad the 2d data sets with zeroes in the third dimension.
\TOPF{} still outperforms \textsc{DGCNN} on the \textsc{TCBS}.
We believe that this is due to the heterogeneous sampling, small sampling set size and most importantly the lack of training data for the neural network.

\section{How to pick the most relevant topological features}
\label{app:featureselection}
\paragraph{Simplified heuristic}
The persistent homology $P_k$ module in dimension $k$ is given to us as a list of pairs of birth and death times $(b_i^k,d_i^k)$.
We can assume these pairs are ordered in non-increasing order of the durations $l_i^k=d_i^k-b_i^k$.
This list is typically very long and consists to a large part of noisy homological features which vanish right after they appear.
In contrast, we are interested in connected components, loops, cavities, etc.\ that \emph{persist} over a long time, indicating that they are important for the shape of the point cloud.
Distinguishing between the relevant and the irrelevant features is in general difficult and may depend on additional insights on the domain of application.
In order to provide a heuristic which does not depend on any a-priori assumptions on the number of relevant features we pick the smallest quotient $q_i^k\coloneq l_{i+1}^k/l_i^k>0$ as the point of cut-off $\smash{N_k\coloneq\argmin_{i} q_i^k}$.
The only underlying assumption of this approach is that the band of \enquote{relevant} features is separated from the \enquote{noisy} homological features by a drop in persistence.
\paragraph{Advanced Heuristic}
However, certain applications have a single very prominent feature, followed by a range of still relevant features with significantly smaller life times, that are then followed by the noisy features after another drop-off.
This then could potentially lead the heuristic to find the wrong drop-off.
We propose to mitigate this issue by introducing a hyperparameter $\beta\in\R_{>0}$.
We then define the $i$-th importance-drop-off quotient $q_i^k$ by
\[
q_i^k\coloneq \nicefrac{l_{i+1}^k}{l_i^k}\left(1+\nicefrac{\beta}{i}\right).
\]
The basic idea is now to consider the most significant $N_k$ homology classes in dimension $k$ when setting $N_k$ to be
\[
N_k\coloneq\argmin_{i} q_i^k.
\]
Increasing $\beta$ leads the heuristic to prefer selections with more features than with fewer features.
Empirically, we still found $\beta=0$ to work well in a broad range of application scenarios and used it throughout all experiments.
There are only a few cases where domain-specific knowledge could suggest picking a larger $\beta$.

To catch edge cases with multiple steep drops or a continuous transition between real features and noise, we introduce two more checks:
We allow a minimal $q_i^k$ of $\mathtt{min\_ rel\_ quot}=0.1$ and a maximal quotient $\nicefrac{q_1^h}{q_i^k}$ of $\mathtt{max\_total\_quot}=10$ between any homology dimensions.
Because features in $0$-dimensional homology are often more noisy than features in higher dimensions, we add a minimum zero-dimensional homology ratio of $\mathtt{min\_0\_ratio}=5$, i.e.\ every chosen $0$-dimensional feature needs to be at least $\mathtt{min\_0\_ratio}$ more persistent then the minimum persistence of the higher-dimensional features.
Because these hyperparameters only deal with the edge cases of feature selection, \TOPF{} is not very sensitive to them.
For all our experiments, we used the above hyperparameters.
We advise to change them only in cases where one has in-depth domain knowledge about the nature of relevant topological features.

\paragraph{Fixed number of topological features}
Alternatively, it is possible to specify a fixed desired number $N_d$ of topological features per dimension $d$.
\TOPF{} then automatically returns the $N_d$ most relevant features in dimension $d$.
For practical purposes, we can then weigh these features by a function $w(-)$ in the life time $d_i-b_i$ or the life quotient $d_i/b_i$ of the features.
Possible picks for  $w$ include an exponential function $w(x)=e^x$, a quadratic function $w(x)=x^2$, a linear function $w(x)=x$ or a scaled sigmoid function.
Our intuition suggests that when picking functions like a linear function, the many short-lived features will be given too much weight in comparison to the few more relevant features.
Selection the best weight function is an interesting open problem for future work.
\section{Simplicial Weights}

\label{app:WeightedSCs}
\begin{figure*}[tb!]
	\vskip 0.2in
	\begin{center}
		\centerline{\includegraphics[width=\linewidth]{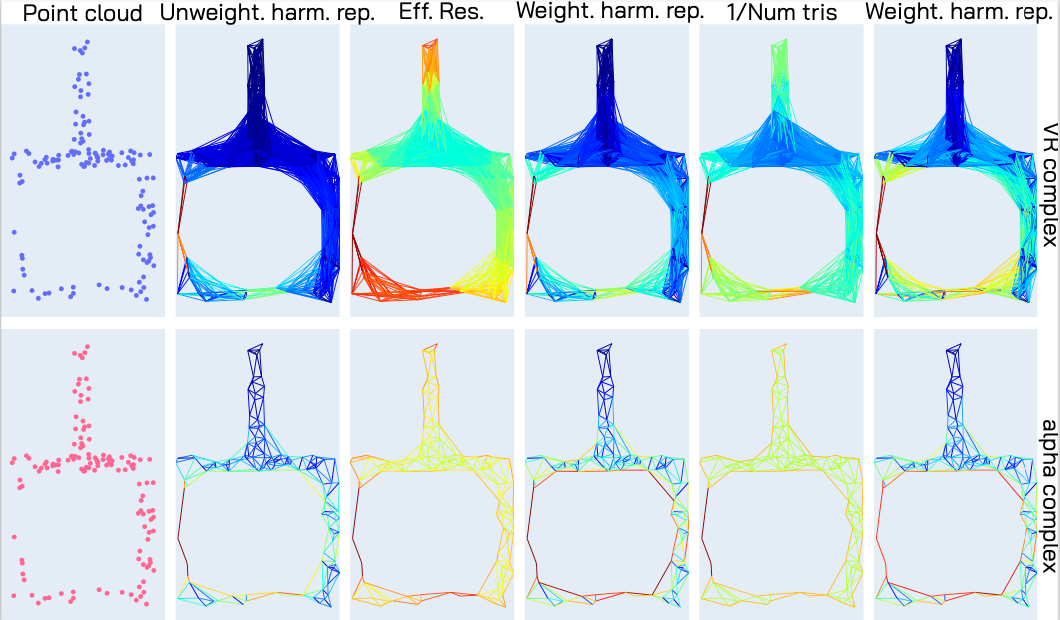}}
		\caption{\figuretitle{Effect of weighing a simplicial complex on harmonic representatives.}
			\emph{Top:} \VR~complex.
			\emph{Bottom:} $\alpha$-complex
			\emph{Left:} The base point cloud with different densities.
			\emph{2\textsuperscript{nd} Left:} Unweighted harmonic homology representative of the large loop.
			\emph{3\textsuperscript{rd} Right:} Effective resistance of the $1$-simplices.
			\emph{3\textsuperscript{rd} Right:} Harmonic homology representative of the complex weighted by effective resistance.
			\emph{2\textsuperscript{nd} Right:} Inverse of number of incident triangles (\Cref{def:effresweights}).
			\emph{Right:} Harmonic homology representative of the complex weighted by number of incident triangles.
			Up to a small threshold, the standard harmonic representative in the \VR~complex is almost exclusively supported in the low-density regions of the simplicial complex.
			This leads to poor and unpredictable classification performance in downstream tasks.
			In contrast, the harmonic homology representative of the weighted \VR~complex has a more homogenous support along the loop, while still being able to discriminate the edges not contributing to the loop.
			The $\alpha$-complex suffers less from this phenomenon (at least in dimension $2$), and hence reweighing is not necessarily required.
		}
		\label{fig:EffResExample}
	\end{center}
	\vskip -0.2in
\end{figure*}
In an ideal world, the harmonic eigenvectors in dimension $k$ would be vectors assigning $\pm 1$ to all $k$-simplices contributing to $k$-dimensional homological feature, a $0$ to all $k$-simplices not contributing or orthogonal to the feature, and a value in $(-1,1)$ for all simplices based on the alignment of the simplex with the boundary of the void.
However, this is not the case:
In dimension $1$, we can for example imagine a total flow of $1$ circling around the hole.
This flow is then split up between all parallel edges which means \emph{two} things:
\textbf{I} Edges where the loop has a \emph{larger diameter} have \emph{smaller harmonic values} than edges in thin areas and
\textbf{II} in \VR~complexes, which are the most frequently used simplicial complexes in \TDA{}, edges in areas with a \emph{high point density} have \emph{smaller harmonic values} than edges in low-density areas.
Point \textbf{II} is another advantage of $\alpha$-complexes: The expected number of simplices per point does not scale with the point density in the same way as it does in the \VR~complex, because only the simplices of the Delaunay triangulation can appear in the complex.

We address this problem by weighing the $k$-simplices of the simplicial complex.
The idea behind this is to weigh the simplicial complex in such a way that it increases and decreases the harmonic values of some simplices in an effort to make the harmonic eigenvectors more homogeneous.
For weights $w\in\R^{\SC_k}$, $W=\diag (w)$, the symmetric weighted Hodge Laplacian \citep{schaub2020random} takes the form of
\[
L_k^w=W^{1/2}\bound_{k-1}\bound_{k-1}^\top W^{1/2}+W^{-1/2}\bound_{k}\bound_{k}^\top W^{-1/2}.
\]
Because we want the homology representative to lie in the weighted gradient space, we have to scale its entries with the weight and set $e^i_{k,w}\coloneq W^{-1/2}e^i_{k}$.
With this, we have that
\[
\bound_{k-1}^\top W^{1/2}e^i_{k,w}=\bound_{k-1}^\top W^{1/2}W^{-1/2}e^i_{k}=\bound_{k-1}^\top e^i_{k}=0
\]
We propose two options to weigh the simplicial complex. The first option is to weigh a $k$-simplex by the square of the number of $k+1$-simplices the simplex is contained in:
\[
w_\Delta(\sigma_k) = 1/(|\{\sigma_{k+1}\in\SC_{k+1}^t :\sigma_k\subset\sigma_{k+1} \}|+1)^2
\]
where the $+1$ is to enforce good behaviour at simplices that are not contained in any higher-order simplices.
One of the advantages of the $\alpha$-complex is that we don't have large concentrations of simplices in well-connected areas.
The proposed weighting $w_\Delta$ is computationally straightforward, as it can be obtained as the column sums of the absolute value of the boundary matrix $|\bound_k|$.
The weights also deal with the previously mentioned problem \textbf{II}: As the homology representative is scaled inversely to the weight vector $w$, the simplices in high-density regions will be assigned a low weight and thus their weighted homology representative will have a larger entry.
By the projection to the orthogonal complement of the curl space, this large entry is then diffused among the high-density region of the \tSC{} with many simplices, whereas the lower entries of the simplices in low-density regions are only diffused among fewer adjacent simplices.

\cite{paik2023circular} consider the dual problem of weighting the simplicial complex such that a harmonic \emph{co}homology representative becomes sampling-density independent. They show that the dual to our approach, i.e. by dividing by the sum of lower-incident simplices (i.e.\ nodes) of the two vertices of an edge produces an harmonic representative stable under heterogeneous density.

However, the first weight is not able to incorporate the number of parallel simplices into the weighting.
This is why we propose a second simplicial weight function based on generalised effective resistance.
\begin{definition}[Effective Hodge resistance weights]
	\label{def:effresweights}
	For a simplicial complex $\SC$ with boundary matrices $(\bound_k)$, we define the effective Hodge resistance weights $w_R$ on $k$-simplices to be:
	\[
	w_R\coloneq \diag \left( \bound_{k-1}^+\bound_{k-1}\right)^2
	\]
	where $\diag(-)$ denotes the vector of diagonal entries and $(-)^+$ denotes taking the Moore--Penrose inverse.
\end{definition}
Intuitively for $k=1$, we can assume that every edge has a resistance of $1$ and then the effective resistance coincides with the notion from Physics.
Thus simplices with many parallel simplices are assigned a small effective resistance, whereas simplices with few parallel simplices are assigned an effective resistance close to $1$.
However, computing the Moore--Penrose inverse is computationally expensive and only feasible for small simplicial complexes.

In \Cref{fig:EffResExample}, we show that the weights $w_\Delta$ are a good approximation of the effective resistance in terms of the resulting harmonic representative.
The standard form of \TOPF{} used in all experiments uses $w_\Delta$-weights.
\section{Limitations}
\label{app:limitations}
\paragraph{Topological features are not everywhere}
The proposed topological point features take relevant persistent homology generators and turn these into point-level features.
As such, applying \TOPF{} only produces meaningful results on point clouds that have a topological structure.
On these point clouds, \TOPF{} can extract structural information unobtainable by non-topological methods.
Although \TDA{} has been successful in a wide range of applications, a large number of data sets does not possess a meaningful topological structure.
Applying \TOPF{} in these cases will produce no additional information.
Other data sets require pre-processing before containing topological features.
In \Cref{fig:QualitativeExperiments} \emph{left}, the $2d$ topological features characterising protein pockets of Cys123 only appear after artificially adding points sampled on the convex hull of the point cloud (Cf.\ \cite{oda2024novel}).
\paragraph{Computing persistent homology can be computationally expensive}

As \TOPF{} relies on the computation of persistent homology including homology generators, its runtime increases on very large point clouds.
This is especially true when using \VR{} instead of $\alpha$-filtrations, which become computationally infeasible for higher-dimensional point clouds.
Persistent homology computations for dimensions above $2$ are only feasible for very small point clouds.
Because virtually all discovered relevant homological features in applications appear in dimension $0$, $1$, or $2$, this does not present a large problem.
Despite these computational challenges, subsampling, either randomly or using landmarks, usually preserves relevant topological features and thus extends the applicability of \TDA{} in general and \TOPF{} even to very large point clouds.

\paragraph{Automatic feature selection is difficult without domain knowledge}
While the proposed heuristics works well across a variety of domains and application scenarios, only domain- and problem-specific knowledge makes truthful feature selection feasible.

\paragraph{Experimental Evaluation}
There are no benchmark sets for topological point features in the literature, which makes benchmarking \TOPF{} not straightforward.
On the level of clustering, we introduced the topological clustering benchmark suite to make quantitative comparisons of \TOPF{} possible, and benchmarked \TOPF{} on some of the point clouds of \cite{Grande:2023}.
On both the level of point features and real-world data sets, it is however hard to establish what a \emph{ground truth} of topological features would mean.
Instead we chose to qualitatively report the results of \TOPF{} on proteins and real-world data, see \Cref{fig:QualitativeExperiments}.

\end{document}